\newcommand{\function}[2]{#1\left(#2\right)}
\newcommand{\p}[1]{\mathbf{P}\left(#1\right)}
\newcommand{\ip}[1]{\mathbf{P}_*\left(#1\right)}
\newcommand{\op}[1]{\mathbf{P}^*\left(#1\right)}
\newcommand{\e}[1]{\mathbf{E}\left(#1\right)}
\newcommand{\var}[1]{\textrm{Var}\left(#1\right)}
\newenvironment{proof}{\textbf{Proof: }}{$\hfill \square \\$}
\newtheorem{lemma}{Lemma}[section]
\newtheorem{thm}{Theorem}[section]
\newcommand{\ind}[1]{\mathbf{1}_{#1}}
\newcommand{\cas}{\stackrel{a.s.}{\longrightarrow}}
\newcommand{\argmax}{\operatornamewithlimits{\textrm{argmax}}}
\newcommand{\argmin}{\operatornamewithlimits{\textrm{argmin}}}
\newcommand{\lsup}{\operatornamewithlimits{\overline{\lim}}}
\newcommand{\linf}{\operatornamewithlimits{\underline{\lim}}}
\newcommand{\conv}[1]{Conv\left(#1\right)}
\begin{document}
\title{\textsc{Nonparametric Least Squares Estimation of a Multivariate Convex Regression Function}}
\author{\begin{tabular}{c}
Emilio Seijo and Bodhisattva Sen\\
Columbia University
\end{tabular}}
\date{July 10, 2010}
\maketitle
\fontsize{12}{14pt}
\selectfont

\begin{abstract}
{This paper deals with the consistency of the least squares estimator of a convex regression function when the predictor is multidimensional. We characterize and discuss the computation of such an estimator via the solution of certain quadratic and linear programs. Mild sufficient conditions for the consistency of this estimator and its subdifferentials in fixed and stochastic design regression settings are provided. We also consider a regression function which is known to be convex and componentwise nonincreasing and discuss the characterization, computation and consistency of its least squares estimator.}
\end{abstract}


\section{Introduction}
Consider a closed, convex set $\mathfrak{X}\subset\mathbb{R}^d$, for $d \ge 1$, with nonempty interior and a regression model of the form
\begin{equation}\label{ec1}
Y=\phi(X) + \epsilon
\end{equation}
where $X$ is a $\mathfrak{X}$-valued random vector, $\epsilon$ is a random variable with $\e{\epsilon\left|X\right.}=0$, and $\phi:\mathbb{R}^d\rightarrow\mathbb{R}$ is an unknown {\it convex} function. Given independent observations $(X_1,Y_1),\ldots,(X_n,Y_n)$ from such a model, we wish to estimate $\phi$ by the method of least squares, i.e., by finding a convex function $\hat{\phi}_{n}$ which minimizes the discrete $\mathcal{L}_2$ norm
\[ \left(\sum_{k=1}^n \left|Y_k - \psi(X_k)\right|^2\right)^{\frac{1}{2}}\]
among all convex functions $\psi$ defined on the convex hull of $X_1,\ldots,X_n$. In this paper we characterize the least squares estimator, provide means for its computation, study its finite sample properties and prove its consistency.

The problem just described is a nonparametric regression problem with known shape restriction (convexity). Such problems have a long history in the statistical literature with seminal papers like \cite{bru}, \cite{gre} and \cite{hil} written more than 50 years ago, albeit in simpler settings. The former two papers deal with the estimation of monotone functions while the latter discusses least squares estimation of a concave function whose domain is a subset of the real line. Since then, many results on different nonparametric shape restricted regression problems have been published. For instance, \cite{bru70} and, more recently, \cite{zhang02} have enriched the literature concerning isotonic regression. In the particular case of convex regression, \cite{hp76} proved the consistency of the least squares estimator introduced in \cite{hil}. Some years later, \cite{mam} and \cite{gjw} derived, respectively, the rate of convergence and asymptotic distribution of this estimator. Some alternative methods of estimation that combine shape restrictions with smoothness assumptions have also been proposed for the one-dimensional case; see, for example, \cite{biho} where a kernel-based estimator is defined and its asymptotic distribution derived.

Although the asymptotic theory of the one-dimensional convex regression problem is well understood, not much has been done in the multidimensional scenario. The absence of a natural order structure in $\mathbb{R}^d$, for $d > 1$, poses a natural impediment in such extensions. A convex function on the real line can be characterized as an absolutely continuous function with increasing first derivative (see, for instance, \cite{fol}, Exercise 42.b, page 109). This characterization plays a key role in the computation and asymptotic theory of the least squares estimator in the one-dimensional case. By contrast, analogous results for convex functions of several variables involve more complicated characterizations using either second-order conditions (as in \cite{du77}, Theorem 3.1, page 163) or cyclical monotonicity (as in \cite{rca}, Theorems 24.8 and 24.9, pages 238-239). Interesting differences between convex functions on $\mathbb{R}$ and convex functions on $\mathbb{R}^d$ are given in \cite{johan74} and \cite{bro78a}.

Recently there has been considerable interest in shape restricted function estimation in multidimension. In the density estimation context, \cite{cusas08} deal with the computation of the nonparametric maximum likelihood estimator of a multidimensional log-concave density, while \cite{cusa09}, \cite{schudu} and \cite{scdu} discuss its consistency and related issues. \cite{sewe} study the computation and consistency of the maximum likelihood estimator of convex-transformed densities. This paper focuses on estimating a regression function which is known to be convex. To the best of our knowledge this is the first attempt to systematically study the characterization, computation, and consistency of the least squares estimator of a convex regression function with multidimensional covariates in a {\it completely nonparametric} setting.

In the field of econometrics some work has been done on this multidimensional problem in less general contexts and with more stringent assumptions. Estimation of concave and/or componentwise nondecreasing functions has been treated, for instance, in \cite{bama}, \cite{mat1}, \cite{mat2}, \cite{bersieve} and \cite{necpt}. The first two papers define maximum likelihood estimators in semiparametric settings. The estimators in \cite{mat1} and \cite{bama} are shown to be consistent in \cite{mat1} and \cite{masa}, respectively. A maximum likelihood estimator and a sieved least squares estimator have been defined and techniques for their computation have been provided in \cite{necpt} and \cite{bersieve}, respectively.

The method of least squares has been applied to multidimensional concave regression in \cite{kuo}. We take this work as our starting point. In agreement with the techniques used there, we define a least squares estimator which can be computed by solving a quadratic program. We argue that this estimator can be evaluated at a single point by finding the solution to a linear program. We then show that, under some mild regularity conditions, our estimator can be used to consistently estimate both, the convex function and its subdifferentials.

Our work goes beyond those mentioned above in the following ways: Our method does not require any tuning parameter(s), which is a major drawback for most nonparametric regression methods, such as kernel-based procedures. The choice of the tuning parameter(s) is especially problematic in higher dimensions, e.g., kernel based methods would require the choice of a $d \times d$ matrix of bandwidths. The sets of assumptions that most authors have used to study the estimation of a multidimensional convex regression function are more restrictive and of a different nature than the ones in this paper. As opposed to the maximum likelihood approach used in \cite{bama}, \cite{mat1}, \cite{necpt} and \cite{masa}, we prove the consistency of the estimator keeping the distribution of the errors {\it unspecified}; e.g., in the i.i.d. case we only assume that the errors have zero expectation and finite second moment. The estimators in \cite{bersieve} are sieved least squares estimators and assume that the observed values of the predictors lie on equidistant grids of rectangular domains. By contrast, our estimators are unsieved and our assumptions on the spatial arrangement of the predictor values are much more relaxed. In fact, we prove the consistency of the least squares estimator under both fixed and stochastic design settings; we also allow for heteroscedastic errors. In addition, we show that the least squares estimator can also be used to approximate the gradients and subdifferentials of the underlying convex function.

It is hard to overstate the importance of convex functions in applied mathematics. For instance, optimization problems with convex objective functions over convex sets appear in many applications. Thus, the question of accurately estimating a convex regression function is indeed interesting from a theoretical perspective. However, it turns out that convex regression is important for numerous reasons besides statistical curiosity. Convexity also appears in many applied sciences. One such field of application is microeconomic theory. Production functions are often supposed to be concave and componentwise nondecreasing. In this context, concavity reflects decreasing marginal returns. Concavity also plays a role in the theory of rational choice since it is a common assumption for utility functions, on which it represents decreasing marginal utility. The interested reader can see \cite{hil}, \cite{varian1} or \cite{varian2} for more information regarding the importance of concavity/convexity in economic theory.

The paper is organized as follows. In Section \ref{sec1} we discuss the estimation procedure, characterize the estimator and show how it can be computed by solving a positive semidefinite quadratic program and a linear program. Section \ref{sec2} starts with a description of the deterministic and stochastic design regression schemes. The statement and proof of our main results are also included in Section \ref{sec2}. In Section \ref{sec4} we provide the proofs of the technical lemmas used to prove the main theorem. Section \ref{sec5}, the Appendix, contains some results from convex analysis and linear algebra that are used in the paper and may be of independent interest.

\section{Characterization and finite sample properties}\label{sec1}
We start with some notation. For convenience, we will regard elements of the Euclidian space $\mathbb{R}^m$ as column vectors and denote their components with upper indices, i.e, any $z \in \mathbb{R}^m$ will be denoted as $z=(z^1,z^2,\ldots,z^m)$. The symbol $\overline{\mathbb{R}}$ will stand for the extended real line. Additionally, for any set $A\subset\mathbb{R}^d$ we will denoted as $\conv{A}$ its convex hull and we'll write $\conv{X_1,\ldots,X_n}$ instead of $\conv{\{X_1,\ldots,X_n\}}$. Finally, we will use $\langle\cdot,\cdot\rangle$ and $|\cdot|$ to denote the standard inner product and norm in Euclidian spaces, respectively.

For $\mathcal{X} =\{X_1,\ldots,X_n\} \subset\mathfrak{X} \subset\mathbb{R}^d$, consider the set $\mathcal{K}_\mathcal{X}$ of all vectors $z = (z^1,\ldots,z^n)' \in\mathbb{R}^n$ for which there is a convex function $\psi:\mathfrak{X}\rightarrow\mathbb{R}$ such that $\psi(X_j) = z^j$ for all $j=1,\ldots,n$. Then, a necessary and sufficient condition for a convex function $\psi$ to minimize the sum of squared errors is that $\psi(X_j) = Z_n^j$ for $j=1,\ldots,n$, where
\begin{equation}
Z_n = \argmin_{z\in\mathcal{K}_\mathcal{X}}\left\{ \sum_{k=1}^n \left|Y_k - z^k\right|^2\right\}. \label{ec40}
\end{equation}

The computation of the vector $Z_n$ is crucial for the estimation procedure. We will show that such a vector exists and is unique. However, it should be noted that there are many convex functions $\psi$ satisfying $\psi(X_j) = Z_n^j$ for all $j=1,\ldots,n$. Although any of these functions can play the role of the least squares estimator, there is one such function which is easily evaluated in $\conv{X_1,\ldots,X_n}$. For computational convenience, we will define our least squares estimator $\hat{\phi}_n$ to be precisely this function and describe it explicitly in (\ref{ec6}) and the subsequent discussion.

In what follows we show that both, the vector $Z_n$ and the least squares estimator $\hat{\phi}_n$ are well-defined for any $n$ data points $(X_1,Y_1),\ldots,(X_n,Y_n)$. We will also provide two characterizations of the set $\mathcal{K}_\mathcal{X}$ and show that the vector $Z_n$ can be computed by solving a positive semidefinite quadratic program. Finally, we will prove that for any $x\in\conv{X_1,\ldots,X_n}$ one can obtain $\hat{\phi}_n(x)$ by solving a linear program.

\subsection{Existence and uniqueness}
We start with two characterizations of the set $\mathcal{K}_\mathcal{X}$. The developments here are similar to those in \cite{necpt} and \cite{kuo}.

\begin{lemma}[Primal Characterization]\label{l2}
Let $z=(z^1,\ldots,z^n)\in\mathbb{R}^n$. Then, $z\in\mathcal{K}_\mathcal{X}$ if and only if for every $j=1,\ldots,n$, the following holds:
\begin{equation}\label{ec3}
z^j = \inf\left\{ \sum_{k=1}^n \theta^k z^k : \sum_{k=1}^n \theta^k = 1,\ \sum_{k=1}^n \theta^k X_k = X_j,\ \theta\geq 0,\ \theta\in\mathbb{R}^n \right\},
\end{equation}
where the inequality $\theta\geq 0 $ holds componentwise.
\end{lemma}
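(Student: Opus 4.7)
The plan is to prove the two directions separately: necessity (``$\Rightarrow$'') is a one-line Jensen argument, while sufficiency (``$\Leftarrow$'') requires constructing a candidate convex function on $P:=\conv{X_1,\ldots,X_n}$ and then extending it to $\mathfrak{X}$.

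For the forward direction, suppose $z\in\mathcal{K}_\mathcal{X}$ and let $\psi:\mathfrak{X}\to\mathbb{R}$ be a convex function with $\psi(X_k)=z^k$ for every $k$. Fix $j$ and let $\theta\in\mathbb{R}^n$ be any feasible point in the program on the right-hand side of (\ref{ec3}). Since $X_j=\sum_{k=1}^n\theta^kX_k$ is then a convex combination of the $X_k$'s, Jensen's inequality gives
\[
z^j=\psi(X_j)=\psi\!\left(\sum_{k=1}^n\theta^kX_k\right)\leq\sum_{k=1}^n\theta^k\psi(X_k)=\sum_{k=1}^n\theta^kz^k,
\]
so $z^j$ is a lower bound for the infimum. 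Taking $\theta^j=1$ and $\theta^k=0$ for $k\neq j$ is feasible and attains this bound, so (\ref{ec3}) holds.

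For the reverse direction, assume (\ref{ec3}) holds for every $j$ and, on the polytope $P$, define the natural candidate
\[
\psi(x):=\inf\!\left\{\sum_{k=1}^n\theta^kz^k:\theta\geq 0,\ \sum_{k=1}^n\theta^k=1,\ \sum_{k=1}^n\theta^kX_k=x\right\}.
\]
The feasible set is a nonempty compact polytope in $\mathbb{R}^n$ (nonempty precisely because $x\in P$), so the infimum is finite and attained. Convexity of $\psi$ on $P$ is a short mixing argument: if $\theta_x$ and $\theta_y$ are feasible for $x$ and $y$ respectively, then $\lambda\theta_x+(1-\lambda)\theta_y$ is feasible for $\lambda x+(1-\lambda)y$. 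Hypothesis (\ref{ec3}) is exactly the statement that $\psi(X_j)=z^j$ for every $j$.

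The only remaining issue — and the main subtlety of the proof — is to extend $\psi$ from $P$ to a convex function on $\mathfrak{X}\subset\mathbb{R}^d$. I would exploit the fact that $\psi$ is polyhedral on $P$: one verifies that the epigraph of $\psi$, as a subset of $P\times\mathbb{R}$, equals $\conv{\{(X_k,z^k)\}_{k=1}^n}+(\{0\}\times[0,\infty))$, which is a polyhedron in $\mathbb{R}^{d+1}$, so $\psi$ admits a finite-max representation $\psi(x)=\max_{i=1}^M(\langle a_i,x\rangle+b_i)$ on $P$. Applying the same formula on all of $\mathbb{R}^d$ gives a finite convex function that agrees with $\psi$ on $P$, hence interpolates the $z^j$'s at the $X_j$'s; its restriction to $\mathfrak{X}$ witnesses $z\in\mathcal{K}_\mathcal{X}$. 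The naive ``largest affine minorant'' extension obtained via LP duality blows up to $+\infty$ off $P$, so the polyhedral representation really is needed to produce a finite convex extension on $\mathfrak{X}$.
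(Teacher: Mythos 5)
Your proposal is correct and follows essentially the same route as the paper: the necessity direction is the same Jensen-type argument (the paper phrases it as a contradiction), and the sufficiency direction uses the same lower-envelope function $g$ defined by the infimum over convex combinations, whose convexity and interpolation property the paper delegates to Lemma \ref{l1}. Your extra step — replacing $g$, which is $+\infty$ off $\conv{X_1,\ldots,X_n}$, by a finite-max (polyhedral) representation so as to get a real-valued convex witness on all of $\mathfrak{X}$ — is a legitimate refinement of a point the paper's proof leaves implicit, not a different approach.
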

\begin{proof} Define the function $g:\mathbb{R}^d\rightarrow\overline{\mathbb{R}}$ by
\begin{equation}\label{ec2}
g(x) = \inf\left\{ \sum_{k=1}^n \theta^k z^k : \sum_{k=1}^n \theta^k = 1,\ \sum_{k=1}^n \theta^k X_k = x,\ \theta\geq 0,\ \theta\in\mathbb{R}^n \right\}
\end{equation}
where we use the convention that $\inf(\emptyset)=+\infty$. By Lemma \ref{l1} in the Appendix, $g$ is convex and finite on the $X_j$'s. Hence, if $z^j$ satisfies (\ref{ec3}) then $z^j = g(X_j)$ for every $j=1,\ldots,n$ and it follows that $z\in\mathcal{K}_\mathcal{X}$.

Conversely, assume that $z\in\mathcal{K}_\mathcal{X}$ and $g(X_j) \neq z^j$ for some $j$. Note that $g(X_k)\leq z^k$ for any $k$ from the definition of $g$. Thus, we may suppose that $g(X_j) < z^j$. As $z\in\mathcal{K}_\mathcal{X}$, there is a convex function $\psi$ such that $\psi(X_k) = z^k$ for all $k=1,\ldots,n$. Then, from the definition of $g(X_j)$ there exist $\theta_0\in\mathbb{R}^n$ with $\theta_0\geq 0$ and $\theta_0^1 + \ldots + \theta_0^n = 1$ such that $\theta_0^1 X_1 + \ldots + \theta_0^n X_n = X_j$ and
\[ \sum_{k=1}^n \theta_0^k \psi(X_k) = \sum_{k=1}^n \theta_0^k z^k < z^j = \psi (X_j) = \function{\psi}{\sum_{k=1}^n \theta_0^k X_k}, \]
which leads to a contradiction because $\psi$ is convex.
\end{proof}

We now provide an alternative characterization of the set $\mathcal{K}_\mathcal{X}$ based on the dual problem to the linear program used in Lemma \ref{l2}.

\begin{lemma}[Dual Characterization]\label{l3}
Let $z\in\mathbb{R}^n$. Then, $z\in\mathcal{K}_\mathcal{X}$ if and only if for any $j=1,\ldots,n$ we have
{\small \begin{equation}\label{ec4}
z^j = \sup\left\{ \langle\xi,X_j\rangle  + \eta : \langle\xi,X_k\rangle + \eta \leq z^k\ \ \forall\ k=1,\ldots,n,\ \xi\in\mathbb{R}^d,\ \eta\in\mathbb{R}\right\}.
\end{equation}}
Moreover, $z\in\mathcal{K}_\mathcal{X}$ if and only if there exist vectors $\xi_1,\ldots,\xi_n\in\mathbb{R}^d$ such that
\begin{equation}\label{ec5}
\langle \xi_j, X_k-X_j \rangle \leq z^k - z^j\ \ \forall\ k,j\in\{1,\ldots,n\}.
\end{equation}
\end{lemma}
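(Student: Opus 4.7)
The plan is to recognize that for each fixed $j$, the infimum in (\ref{ec3}) and the supremum in (\ref{ec4}) are the optimal values of a primal-dual pair of linear programs, and then to apply strong LP duality to identify the two characterizations.

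First I would set up the duality explicitly. Fix $j$ and write the primal in standard form: minimize $\langle z,\theta\rangle$ subject to $A\theta = b_j$, $\theta\geq 0$, where $A\in\mathbb{R}^{(d+1)\times n}$ has first row $(1,\ldots,1)$ and remaining rows stacking the $X_k$'s as columns, and $b_j = (1,X_j)'\in\mathbb{R}^{d+1}$. Introducing dual variables $\eta\in\mathbb{R}$ and $\xi\in\mathbb{R}^d$ for the $d+1$ equality constraints, the dual is to maximize $\eta + \langle\xi,X_j\rangle$ subject to $\eta + \langle\xi,X_k\rangle\leq z^k$ for every $k$, which is precisely (\ref{ec4}). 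Both programs are feasible: the primal admits $\theta=e_j$, and the dual admits $\xi=0$, $\eta=\min_k z^k$. Hence by strong duality for linear programs with finite optimal value, the two optimal values coincide. Combining this with Lemma \ref{l2}, $z\in\mathcal{K}_\mathcal{X}$ if and only if for each $j$ the primal infimum equals $z^j$, which by duality is equivalent to the dual supremum equaling $z^j$, establishing (\ref{ec4}).

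For the equivalence with (\ref{ec5}), the easier direction is sufficiency: given $\xi_1,\ldots,\xi_n$ satisfying (\ref{ec5}), the function
\[ \psi(x) = \max_{1\leq j\leq n}\left\{z^j + \langle\xi_j,x-X_j\rangle\right\} \]
is a maximum of affine functions on $\mathbb{R}^d$, hence convex; and (\ref{ec5}) ensures $\psi(X_k)\leq z^k$ for every $k$ while the $j=k$ term achieves equality, so $\psi(X_k)=z^k$ and $z\in\mathcal{K}_\mathcal{X}$. For the converse, if $z\in\mathcal{K}_\mathcal{X}$, then the dual LP above has finite optimum $z^j$; because its feasible polyhedron is nonempty and the objective attains its maximum whenever the LP is bounded and feasible, there exist $\xi_j\in\mathbb{R}^d$ and $\eta_j\in\mathbb{R}$ with $\eta_j+\langle\xi_j,X_k\rangle\leq z^k$ for all $k$ and $\eta_j+\langle\xi_j,X_j\rangle = z^j$. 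Substituting $\eta_j = z^j - \langle\xi_j,X_j\rangle$ into the inequality yields exactly (\ref{ec5}).

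The only delicate point is invoking attainment of the dual optimum, since the dual feasible region is unbounded. I would justify it by appealing to the fundamental theorem of linear programming: a feasible LP with finite optimal value attains its optimum at a vertex of the feasible polyhedron. Apart from this, the argument is a clean application of LP duality, and the rest is routine bookkeeping relating $(\xi_j,\eta_j)$ to the subgradient form in (\ref{ec5}).
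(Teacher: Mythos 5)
Your proposal is correct and follows the paper's route for the main equivalence: exactly as in the paper, you identify (\ref{ec3}) and (\ref{ec4}) as a primal--dual LP pair, check feasibility of both, and invoke the duality theorem together with Lemma \ref{l2}. Where you genuinely diverge is in the sufficiency direction of (\ref{ec5}): the paper shows that $(\xi_j, z^j - \langle\xi_j,X_j\rangle)$ is dual feasible, that $\theta = e_j$ is primal feasible, and then uses \emph{weak} duality to conclude that (\ref{ec4}) holds, returning to the first assertion to get $z\in\mathcal{K}_\mathcal{X}$; you instead exhibit the explicit polyhedral convex interpolant $\psi(x)=\max_j\{z^j+\langle\xi_j,x-X_j\rangle\}$ with $\psi(X_k)=z^k$, which certifies membership in $\mathcal{K}_\mathcal{X}$ directly from the definition and does not need the first part at all --- a slightly more elementary and self-contained argument, at the cost of not re-deriving (\ref{ec4}) along the way. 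One small imprecision: you justify attainment of the dual optimum by ``optimum attained at a vertex,'' but the dual feasible polyhedron $\{(\xi,\eta):\langle\xi,X_k\rangle+\eta\leq z^k\ \forall k\}$ may contain lines (e.g.\ when the $X_k$ span a proper affine subspace) and then has no vertices; attainment nevertheless holds for any feasible LP with finite value, and more simply it is part of the strong duality theorem here, since the primal feasible set is a nonempty compact polytope so the primal minimum is attained and the duality theorem then supplies a dual optimal solution --- the same fact the paper uses implicitly when it says ``take any solution to (\ref{ec4}).''
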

\begin{proof} According to the primal characterization, $z\in\mathcal{K}_\mathcal{X}$ if and only if the linear programs defined by (\ref{ec3}) have the $z^j$'s as optimal values. The linear programs in (\ref{ec4}) are the dual problems to those in (\ref{ec3}). Then, the duality theorem for linear programs (see \cite{lbg}, page 89) implies that the $z^j$'s have to be the corresponding optimal values to the programs in (\ref{ec4}).

To prove the second assertion let us first assume that $z\in\mathcal{K}_\mathcal{X}$. For each $j\in\{1,\ldots,n\}$ take any solution $(\xi_j,\eta_j)$ to (\ref{ec4}). Then by (\ref{ec4}), $\eta_j = z^j - \langle \xi_j, X_j \rangle$ and the inequalities in (\ref{ec5}) follow immediately because we must have $ \langle\xi_j,X_k\rangle + \eta_j \leq z^k$ for any $k\in\{1,\ldots,n\}$. Conversely, take $z\in\mathbb{R}^n$ and assume that there are $\xi_1,\ldots,\xi_n\in\mathbb{R}^d$ satisfying (\ref{ec5}). Take any $j\in\{1,\ldots,n\}$, $\eta_j = z^j - \langle \xi_j,X_j \rangle$ and $\theta$ to be the vector in $\mathbb{R}^n$ with components $\theta^k = \delta_{kj}$, where $\delta_{kj}$ is the Kronecker $\delta$. It follows that $\langle \xi_j, X_k\rangle + \eta_j \leq z^k$ $\forall$ $k=1,\ldots, n$ so $(\xi_j,\eta_j)$ is feasible for the linear program in (\ref{ec4}). In addition, $\theta$ is feasible for the linear program in (\ref{ec3}) so the weak duality principle of linear programming (see \cite{lbg}, Lemma 1, page 89) implies that $\langle \xi ,X_j \rangle + \eta \leq z^j$ for any pair $(\xi,\eta)$ which is feasible for the problem in the right-hand side of (\ref{ec4}). We thus have that $z^j$ is an upper bound attained by the feasible pair $(\xi_j,\eta_j)$ and hence (\ref{ec4}) holds for all $j=1,\ldots,n$.
\end{proof}

Both, the primal and dual characterizations are useful for our purposes. The primal plays a key role in proving the existence and uniqueness of the least squares estimator. The dual is crucial for its computation.

\begin{lemma}\label{l4}
The set $\mathcal{K}_\mathcal{X}$ is a closed, convex cone in $\mathbb{R}^n$ and the vector $Z_n$ satisfying (\ref{ec40}) is uniquely defined.
\end{lemma}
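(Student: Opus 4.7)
The plan is to verify the three claims in order: the cone property, convexity, closedness, and then existence/uniqueness of $Z_n$.

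First, I would check that $\mathcal{K}_\mathcal{X}$ is a convex cone. This is immediate from the definition: if $z_1, z_2 \in \mathcal{K}_\mathcal{X}$ are witnessed by convex functions $\psi_1, \psi_2:\mathfrak{X}\to\mathbb{R}$, then for any $\lambda_1,\lambda_2 \geq 0$ the function $\lambda_1\psi_1+\lambda_2\psi_2$ is again convex on $\mathfrak{X}$ and witnesses $\lambda_1 z_1 + \lambda_2 z_2 \in \mathcal{K}_\mathcal{X}$. The zero vector is witnessed by the zero function, so $\mathcal{K}_\mathcal{X}$ is a nonempty convex cone in $\mathbb{R}^n$.

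Next, closedness, which is the main technical point. I would use the primal characterization (Lemma \ref{l2}). For each $j$, define the map $F_j:\mathbb{R}^n \to \mathbb{R}$ by
\[
F_j(z) \;=\; \inf\Bigl\{\textstyle\sum_{k=1}^n \theta^k z^k \;:\; \sum_k \theta^k = 1,\ \sum_k \theta^k X_k = X_j,\ \theta\geq 0\Bigr\}.
\]
The feasible set $\Theta_j$ is nonempty (it contains the $j$-th standard basis vector) and is a compact polytope that does not depend on $z$. A standard argument shows that the value of a linear program with a fixed compact nonempty feasible set is continuous in its cost vector: if $z_m \to z$ and $\theta_m \in \argmin_{\theta \in \Theta_j} \langle \theta, z_m\rangle$, extract a subsequence $\theta_m \to \theta^\infty \in \Theta_j$ to obtain $\liminf F_j(z_m) \geq \langle \theta^\infty, z\rangle \geq F_j(z)$, while picking any $\theta^*\in\argmin_{\theta\in\Theta_j}\langle\theta,z\rangle$ yields $\limsup F_j(z_m) \leq \langle \theta^*, z\rangle = F_j(z)$. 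Hence $F_j$ is continuous. By Lemma \ref{l2}, $z_m \in \mathcal{K}_\mathcal{X}$ is equivalent to $F_j(z_m) = z_m^j$ for every $j$; passing to the limit through the continuity of $F_j$ gives $F_j(z) = z^j$ for every $j$, hence $z \in \mathcal{K}_\mathcal{X}$.

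Finally, existence and uniqueness of $Z_n$. The objective $z \mapsto \sum_{k=1}^n |Y_k - z^k|^2$ is strictly convex, continuous, and coercive on $\mathbb{R}^n$. Since $\mathcal{K}_\mathcal{X}$ is nonempty, closed, and convex, the Hilbert space projection theorem (applied to the projection of $(Y_1,\ldots,Y_n)$ onto $\mathcal{K}_\mathcal{X}$) yields a unique minimizer $Z_n$.

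The only nonroutine step is closedness; the cone/convex structure is essentially tautological from the definition, and uniqueness reduces to projecting onto a closed convex set once closedness is in hand. I expect the continuity of the LP value map $F_j$, based on the fact that its feasible set is compact and independent of $z$, to be the crux of the closedness argument.
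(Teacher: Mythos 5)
Your proposal is correct and follows essentially the same route as the paper: both rest on the primal characterization (Lemma \ref{l2}) to reduce closedness to stability of the linear-program values under perturbation of $z$, with the cone/convexity part read off from the definition and $Z_n$ obtained as the projection of $(Y_1,\ldots,Y_n)$ onto a nonempty closed convex set. The only cosmetic difference is that you establish full continuity of the value maps $F_j$ via compactness of the fixed feasible polytope, whereas the paper needs only the easier half: for $z\notin\mathcal{K}_\mathcal{X}$ a single feasible $\theta_0$ yields a strict inequality $\sum_k\theta_0^k z^k < z^j$ that persists in a neighborhood of $z$, so the complement is open.
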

\begin{proof} That $\mathcal{K}_\mathcal{X}$ is a convex cone follows trivially from the definition of the set. Now, if $z\notin \mathcal{K}_\mathcal{X}$, then there is $j\in\{1,\ldots,n\}$ for which
$z^j > g(X_j)$ with the function $g$ defined as in (\ref{ec2}). Thus, there is $\theta_0\in\mathbb{R}^n$ with $\theta_0\geq 0$ and $\theta_0^1 + \ldots + \theta_0^n = 1$ such that $\theta_0^1 X_1 + \ldots + \theta_0^n X_n = X_j$ and $\sum_{k=1}^n \theta_0^k z^k < z^j$. Setting $ \delta = \frac{1}{2}\left(z^j - \sum_{k=1}^n \theta_0^k z^k\right)$ it is easily seen that for all $ \zeta\in\prod_{k=1}^n (z^k-\delta,z^k + \delta)$ we still have $ \sum_{k=1}^n \theta_0^k \zeta^k < \zeta^j$ and thus $\zeta \notin \mathcal{K}_\mathcal{X}$. Therefore we have shown that for any $z \notin \mathcal{K}_\mathcal{X}$ there is a neighborhood $U$ of $z$ with $U\subset \mathbb{R}^n\setminus\mathcal{K}_\mathcal{X}$. Therefore, $\mathcal{K}_\mathcal{X}$ is closed and the vector $Z_n$ is uniquely determined as the projection of $(Y_1,\ldots,Y_n)\in\mathbb{R}^n$ onto the closed convex set $\mathcal{K}_\mathcal{X}$ (see \cite{con}, Theorem 2.5, page 9).
\end{proof}

We are now in a position to define the least squares estimator. Given observations $(X_1,Y_1),\ldots,(X_n,Y_n)$ from model (\ref{ec1}), we take the nonparametric least squares estimator to be the function $\hat{\phi}_n:\mathbb{R}^d\rightarrow\mathbb{R}$ defined by
\begin{equation}\label{ec6}
\function{\hat{\phi}_n}{x} = \inf\left\{ \sum_{k=1}^n \theta^k Z_n^k : \sum_{k=1}^n \theta^k = 1,\ \sum_{k=1}^n \theta^k X_k = x,\ \theta\geq 0,\ \theta\in\mathbb{R}^n \right\}
\end{equation}
for any $x\in\mathbb{R}^d$. Here we are taking the convention that $\inf(\emptyset) = +\infty$. This function is well-defined because the vector $Z_n$ exists and is unique for the sample. The estimator is, in fact, a polyhedral convex function (i.e., a convex function whose epigraph is a polyhedral; see \cite{rca}, page 172) and satisfies, as a consequence of Lemma \ref{l1},
\begin{equation}\nonumber
\hat{\phi}_n (x) = \sup_{\psi\in\mathcal{K}_{\mathcal{X},Z_n}}\{ \psi(x)\},
\end{equation}
where $\mathcal{K}_{\mathcal{X},Z_n}$ is the collection of all convex functions $\psi:\mathbb{R}^d\rightarrow\mathbb{R}$ such that $\psi(X_j) \leq Z_n^j$ for all $j=1,\ldots,n$. Thus, $\hat{\phi}_n$ is the largest convex function that never exceeds the $Z_n^j$'s. It is immediate that $\hat{\phi}_n$ is indeed a convex function (as the supremum of any family of convex functions is itself convex). The primal characterization of the set $\mathcal{K}_\mathcal{X}$ implies that $\hat{\phi}_n(X_j) = Z_n^j$ for all $j=1,\ldots,n$.

\subsection{Finite sample properties}
In the following lemma we state some of the most important finite sample properties of the least squares estimator defined by (\ref{ec6}).

\begin{lemma}\label{l6}
Let $\hat{\phi}_n$ be the least squares estimator obtained from the sample $(X_1,Y_1),\ldots,(X_n,Y_n)$. Then,
\begin{enumerate}[(i)]
\item $\displaystyle \sum_{k=1}^n (\psi(X_k) - \hat{\phi}_n(X_k))(Y_k - \hat{\phi}_n(X_k))\leq 0$ for any convex function $\psi$ which is finite on $\conv{X_1,\ldots,X_n}$;
\item $\displaystyle \sum_{k=1}^n \hat{\phi}_n(X_k)(Y_k - \hat{\phi}_n(X_k))= 0$;
\item $\displaystyle \sum_{k=1}^n Y_k = \sum_{k=1}^n \hat{\phi}_n(X_k)$;
\item the set on which $\hat{\phi}_n < \infty$ is $\conv{X_1,\ldots,X_n}$;
\item for any $x\in\mathbb{R}^d$ the map $(X_1,\ldots,X_n,Y_1,\ldots,Y_n)\hookrightarrow\hat{\phi}_n(x)$ is a Borel-measurable function from $\mathbb{R}^{n(d+1)}$ into $\mathbb{R}$.
\end{enumerate}
\end{lemma}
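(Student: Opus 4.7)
The plan is to derive parts (i)--(iii) from the projection characterization already established in Lemma \ref{l4}, to obtain (iv) directly from the LP definition (\ref{ec6}), and to deal with the measurability claim (v) by a measurable-selection argument built on top of the dual description in Lemma \ref{l3}.

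For (i), Lemma \ref{l4} identifies $Z_n$ as the orthogonal projection of $(Y_1,\ldots,Y_n)$ onto the closed convex set $\mathcal{K}_\mathcal{X}$, so the standard variational inequality gives $\langle (Y_1,\ldots,Y_n) - Z_n,\, z - Z_n\rangle \leq 0$ for every $z\in\mathcal{K}_\mathcal{X}$. For any convex $\psi$ finite on $\conv{X_1,\ldots,X_n}$ the vector $z = (\psi(X_1),\ldots,\psi(X_n))$ lies in $\mathcal{K}_\mathcal{X}$ by definition, and using $Z_n^k = \hat{\phi}_n(X_k)$ this substitution yields (i) directly.

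Parts (ii), (iii) and (iv) then fall out of (i) with almost no extra work. Since $\hat{\phi}_n$ is convex and finite on the hull, so are $2\hat{\phi}_n$, the zero function, and $\hat{\phi}_n + c$ for any real $c$, each of which is a legal test function in (i). Choosing $\psi = 2\hat{\phi}_n$ and $\psi \equiv 0$ gives the two opposite inequalities that combine to (ii); choosing $\psi = \hat{\phi}_n + c$ for $c = \pm 1$ gives $\pm\sum_k(Y_k - \hat{\phi}_n(X_k)) \leq 0$, forcing (iii). For (iv), the feasible set of the infimum in (\ref{ec6}) is nonempty precisely when $x\in\conv{X_1,\ldots,X_n}$; on the hull it is a nonempty compact polytope, so the infimum of the continuous linear functional $\theta\mapsto \sum_k \theta^k Z_n^k$ over it is finite, while off the hull the convention $\inf(\emptyset) = +\infty$ applies.

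The genuine work lies in (v), and this is where I expect the main obstacle. My approach is first to establish that $(X_1,\ldots,X_n,Y_1,\ldots,Y_n) \mapsto Z_n$ is Borel measurable. The complication is that both the target vector and the set $\mathcal{K}_\mathcal{X}$ onto which it is projected depend on the data, so the classical projection theorem does not apply verbatim. Here Lemma \ref{l3} is critical: the second dual characterization realizes $\mathcal{K}_\mathcal{X}$ as the set of $z$ for which there exist auxiliary vectors $\xi_1,\ldots,\xi_n$ satisfying the finite system of bilinear inequalities (\ref{ec5}), so the graph $\{(X_1,\ldots,X_n,z) : z\in\mathcal{K}_\mathcal{X}\}$ is Borel. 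The map $Z_n$ is then the unique minimizer of the continuous, strictly convex function $z \mapsto |(Y_1,\ldots,Y_n) - z|^2$ over a Borel closed-valued multifunction of $X$, and measurability follows from a standard Kuratowski--Ryll-Nardzewski type measurable-selection theorem. With $Z_n$ in hand, fixing $x$ makes (\ref{ec6}) the value of a linear program whose feasible polytope depends measurably on $X$ and whose cost vector is the measurable function $Z_n$; the Borel event $\{x\notin\conv{X_1,\ldots,X_n}\}$ on which the convention $+\infty$ is taken is itself Borel because the distance from $x$ to the hull is continuous in $X$. A second application of the same measurable-selection machinery then gives the required Borel measurability of $\hat{\phi}_n(x)$.
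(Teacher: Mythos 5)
Your handling of (i)--(iv) is correct and is essentially the paper's own argument: (i) is the variational (Moreau) inequality for the projection $Z_n$ of $(Y_1,\ldots,Y_n)$ onto the closed convex cone $\mathcal{K}_\mathcal{X}$ established in Lemma \ref{l4}, and (ii)--(iii) follow by the same test-function device the paper uses (it takes $\psi=\kappa\hat{\phi}_n$, $\kappa\in(0,\infty)$, and $\psi=\hat{\phi}_n\pm 1$; your choices $2\hat{\phi}_n$, $0$ and $\hat{\phi}_n\pm 1$ are the same idea), while (iv) is read off from (\ref{ec6}) exactly as in the paper.

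For (v) you take a genuinely different route. The paper argues measurability of $(X_1,\ldots,X_n,Y_1,\ldots,Y_n)\mapsto Z_n$ by viewing $Z_n$ as the limit of iterates of a quadratic-programming algorithm (active set or interior point), each iterate being an explicit arithmetic function of the data, and similarly treats the LP value $\hat{\phi}_n(x)$ via the simplex method; you instead go through the dual description (\ref{ec5}) and a measurable-selection argument. This is viable, and arguably tidier than appealing to algorithmic iterates, but one step is over-claimed: Kuratowski--Ryll-Nardzewski, and the measurable argmin theorems you need to conclude that the \emph{unique} minimizer depends Borel-measurably on the data, require weak measurability of the correspondence $X\mapsto\mathcal{K}_\mathcal{X}$ (preimages of open sets are Borel); Borel measurability of the graph alone only yields universally measurable selections (von Neumann--Aumann), and in general an existential quantifier (a projection) need not even preserve Borelness. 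Here both points are repairable by the same observation: the constraints in (\ref{ec5}) are closed in $(X,z,\xi)$ and the quantified variable $\xi$ ranges over the $\sigma$-compact space $\mathbb{R}^{nd}$, so the graph, and likewise $\{X:\mathcal{K}_\mathcal{X}\cap U\neq\emptyset\}$ for open (hence $\sigma$-compact) $U$, is a countable union of projections of closed sets along compact factors and is therefore $F_\sigma$, giving weak measurability. You also need a word about the fact that $\mathcal{K}_\mathcal{X}$ is unbounded before invoking a measurable maximum/argmin theorem: intersect with the closed ball of radius $|(Y_1,\ldots,Y_n)|$ centered at $(Y_1,\ldots,Y_n)$ (legitimate since $0\in\mathcal{K}_\mathcal{X}$), which produces a measurable compact-valued correspondence containing the minimizer. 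With these two repairs your argument for the measurability of $Z_n$ is complete, and the final step is fine: the feasible set of the LP in (\ref{ec6}) is a compact-valued correspondence of $X$ with closed graph inside the simplex, so its value function with the measurable cost $Z_n$ is Borel, and the exceptional event $\{x\notin\conv{X_1,\ldots,X_n}\}$ is Borel as you note.
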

\begin{proof} Property $(i)$ follows from Moreau's decomposition theorem, which can be stated as: \newline
{\it Consider a closed convex set $\mathcal{C}$ on a Hilbert space $\mathcal{H}$ with inner product $\langle\cdot,\cdot\rangle$ and norm $\|\cdot\|$. Then, for any $x\in\mathcal{H}$ there is only one vector $x_\mathcal{C}\in\mathcal{C}$ satisfying $\|x - x_\mathcal{C}\| = \argmin_{\xi\in\mathcal{C}}\{\|x-\xi\|\}$. The vector $x_\mathcal{C}$ is characterized by being the only element of $\mathcal{C}$ for which the inequality $\langle \xi-x_\mathcal{C},x-x_\mathcal{C}\rangle\leq 0$ holds for every $\xi\in\mathcal{C}$ (see \cite{mor} or \cite{chinos}).}

Taking $\psi$ to be $\kappa \hat{\phi}_n$ and letting $\kappa$ vary through $(0,\infty)$ gives $(ii)$ from $(i)$. Similarly, $(iii)$ follows from $(i)$ by letting $\psi$ to be $\hat{\phi}_n \pm 1$. Property $(iv)$ is obvious from the definition of $\hat{\phi}_n$.

To see why $(v)$ holds, we first argue that the map $(X_1,\ldots,X_n,Y_1,$ $ \ldots,Y_n)\hookrightarrow Z_n$ is measurable. This follows from the fact that $Z_n$ is the solution to a convex quadratic program and thus can be found as a limit of sequences whose elements come from arithmetic operations with $(X_1,\ldots,X_n,$ $Y_1,\ldots,Y_n)$. Examples of such sequences are the ones produced by active set methods, e.g, see \cite{bolqp}; or by interior-point methods (see \cite{kavaqp} or \cite{mesuqp}). The measurability of $\hat{\phi}_n(x)$ follows from a similar argument, since it is the optimal value of a linear program whose solution can be obtained from arithmetic operations involving just $(X_1,\ldots,X_n,Y_1,\ldots,Y_n)$ and $Z_n$ (e.g., via the well-known simplex method; see \cite{nowr}, page 372 or \cite{lbg}, page 30).
\end{proof}

\subsection{Computation of the estimator}
Once the vector $Z_n$ defined in (\ref{ec40}) has been obtained, the evaluation of $\hat{\phi}_n$ at a single point $x$ can be carried out by solving the linear program in (\ref{ec6}). Thus, we need to find a way to compute $Z_n$. And here the dual characterization proves of vital importance, since it allows us to compute $Z_n$ by solving a quadratic program.
\begin{lemma}\label{l7}
Consider the positive semidefinite quadratic program
\begin{equation}\label{ec7}
\begin{array}{rc}
\min &  \sum_{k=1}^n |Y_k - z^k|^2 \\
\textrm{subject to} & \langle \xi_k , X_j - X_k \rangle \leq z^j - z^k\ \ \forall\ k,j=1,\ldots,n \\
 & \xi_1,\ldots,\xi_n\in\mathbb{R}^d, z\in\mathbb{R}^n.
\end{array}
\end{equation}
Then, this program has a unique solution $Z_n$ in $z$, i.e., for any two solutions $(\xi_1,\ldots,\xi_n,z)$ and $(\tau_1,\ldots,\tau_n,\zeta)$ we have $z = \zeta = Z_n$. This solution $Z_n$ is the only vector in $\mathbb{R}^n$ which satisfies (\ref{ec40}).
\end{lemma}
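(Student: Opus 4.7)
The plan is to reduce the quadratic program (\ref{ec7}) to the convex projection problem (\ref{ec40}) by recognizing its constraints as the dual feasibility conditions for membership in $\mathcal{K}_\mathcal{X}$. My first step is to observe that the constraint system $\langle \xi_k, X_j - X_k\rangle \leq z^j - z^k$ is exactly the system of inequalities (\ref{ec5}). By the second assertion of Lemma \ref{l3}, a vector $z \in \mathbb{R}^n$ lies in $\mathcal{K}_\mathcal{X}$ if and only if there exist $\xi_1,\ldots,\xi_n \in \mathbb{R}^d$ such that $(\xi_1,\ldots,\xi_n, z)$ is feasible for (\ref{ec7}). Equivalently, the projection of the feasible set of (\ref{ec7}) onto the $z$-coordinates is exactly $\mathcal{K}_\mathcal{X}$.

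Since the objective $\sum_{k=1}^n |Y_k - z^k|^2$ depends only on $z$ and not on the $\xi_k$'s, minimizing it over the feasible set of (\ref{ec7}) is equivalent to minimizing the same function over $z \in \mathcal{K}_\mathcal{X}$. By Lemma \ref{l4}, this projection problem has a unique minimizer, namely the vector $Z_n$ defined in (\ref{ec40}). Thus if $(\xi_1,\ldots,\xi_n, z)$ and $(\tau_1,\ldots,\tau_n, \zeta)$ are any two optima of (\ref{ec7}), their $z$-components must satisfy $z = \zeta = Z_n$. Conversely, since $Z_n \in \mathcal{K}_\mathcal{X}$, Lemma \ref{l3} guarantees that one can choose $\xi_1,\ldots,\xi_n$ making $(\xi_1,\ldots,\xi_n, Z_n)$ feasible, confirming existence of an optimum whose $z$-component equals $Z_n$.

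For completeness I would record that (\ref{ec7}) is indeed a positive semidefinite quadratic program: treating the decision variable as $(\xi_1,\ldots,\xi_n, z) \in \mathbb{R}^{nd+n}$, the objective expands as $|Y|^2 - 2\langle Y, z\rangle + |z|^2$, so its Hessian is block-diagonal with a $2 I_n$ block on the $z$-coordinates and zero on the $\xi$-coordinates, and hence positive semidefinite; the constraints are affine. I do not anticipate any substantive obstacle, since essentially all the work has already been done in Lemmas \ref{l3} and \ref{l4}; the present statement is really a clean repackaging of those two results, with the only mild subtlety being that uniqueness is asserted only in the $z$-coordinate (the $\xi_k$'s corresponding to subgradients at interior points are generally non-unique, reflecting non-differentiability of the piecewise-linear fit $\hat{\phi}_n$).
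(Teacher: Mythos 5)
Your proposal is correct and follows essentially the same route as the paper: identify the constraints of (\ref{ec7}) with the dual characterization (\ref{ec5}) from Lemma \ref{l3}, note that the objective depends only on $z$ so the problem reduces to projecting $Y$ onto $\mathcal{K}_\mathcal{X}$, and invoke the uniqueness of $Z_n$ from Lemma \ref{l4}. The additional remarks on the positive semidefinite Hessian and the non-uniqueness of the $\xi_k$'s are fine but not needed.
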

\begin{proof} From Lemma \ref{l3} if $(\xi_1,\ldots,\xi_n,z)$ belongs in the feasible set of this program, then $z\in\mathcal{K}_{\mathcal{X}}$. Moreover, for any $z\in\mathcal{K}_{\mathcal{X}}$ there are $\xi_1,\ldots,\xi_n\in\mathbb{R}^d$ such that $(\xi_1,\ldots,\xi_n,z)$ belongs to the feasible set of the quadratic program. Since the objective function only depends on $z$, solving the quadratic program is the same as getting the element of $\mathcal{K}_\mathcal{X}$ which is the closest to $Y$. This element is, of course, the uniquely defined $Z_n$ satisfying (\ref{ec40}).
\end{proof}
The quadratic program (\ref{ec7}) is positive semidefinite. This implies certain computational complexities, but most modern nonlinear programming solvers can handle this type of optimization problems. Some examples of high-performance quadratic programming solvers are CPLEX, LINDO, \\ MOSEK and QPOPT.
\begin{figure}[h!]
\centering
\includegraphics[height=6cm,width=6cm]{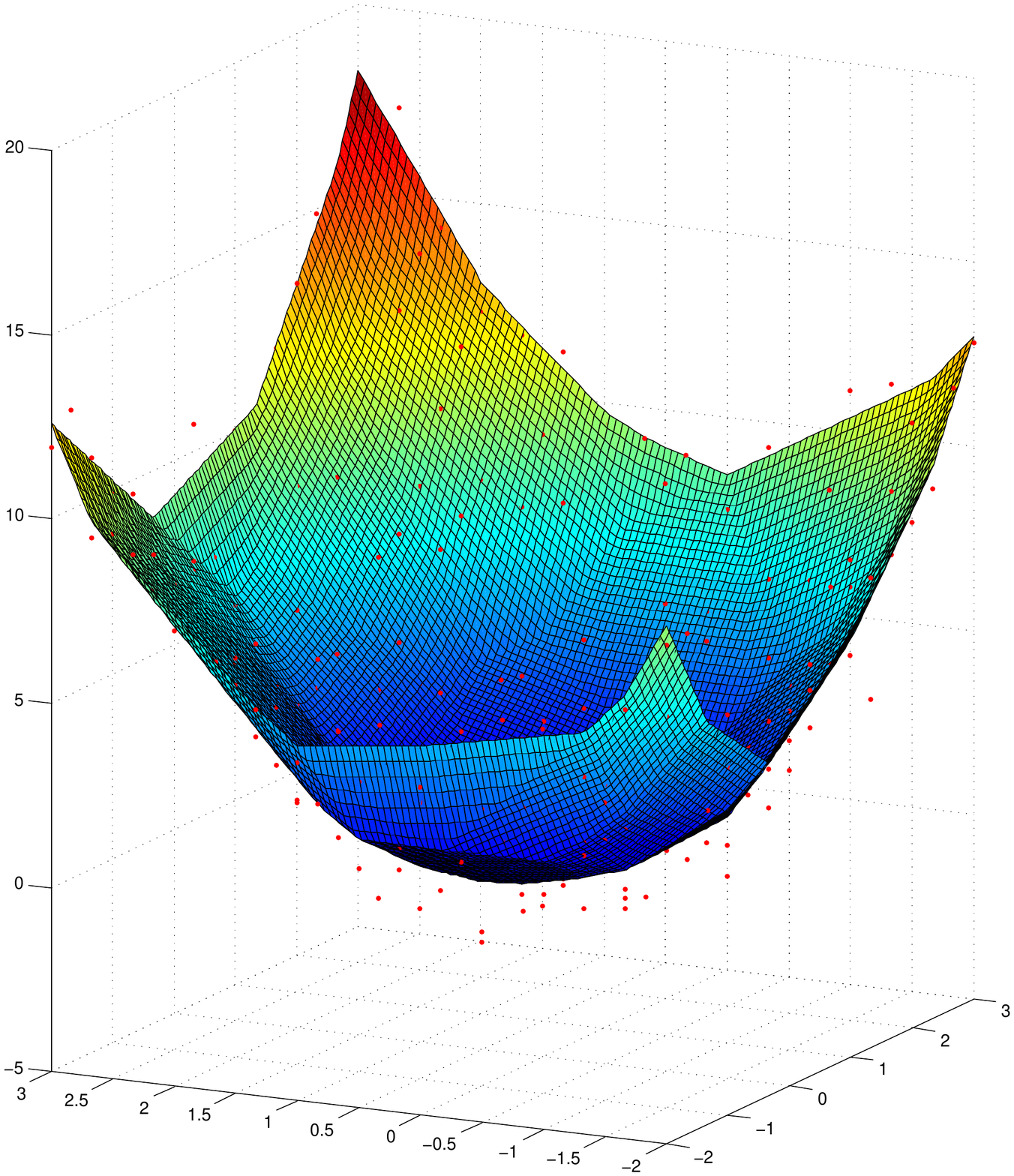} \includegraphics[height=6cm,width=6cm]{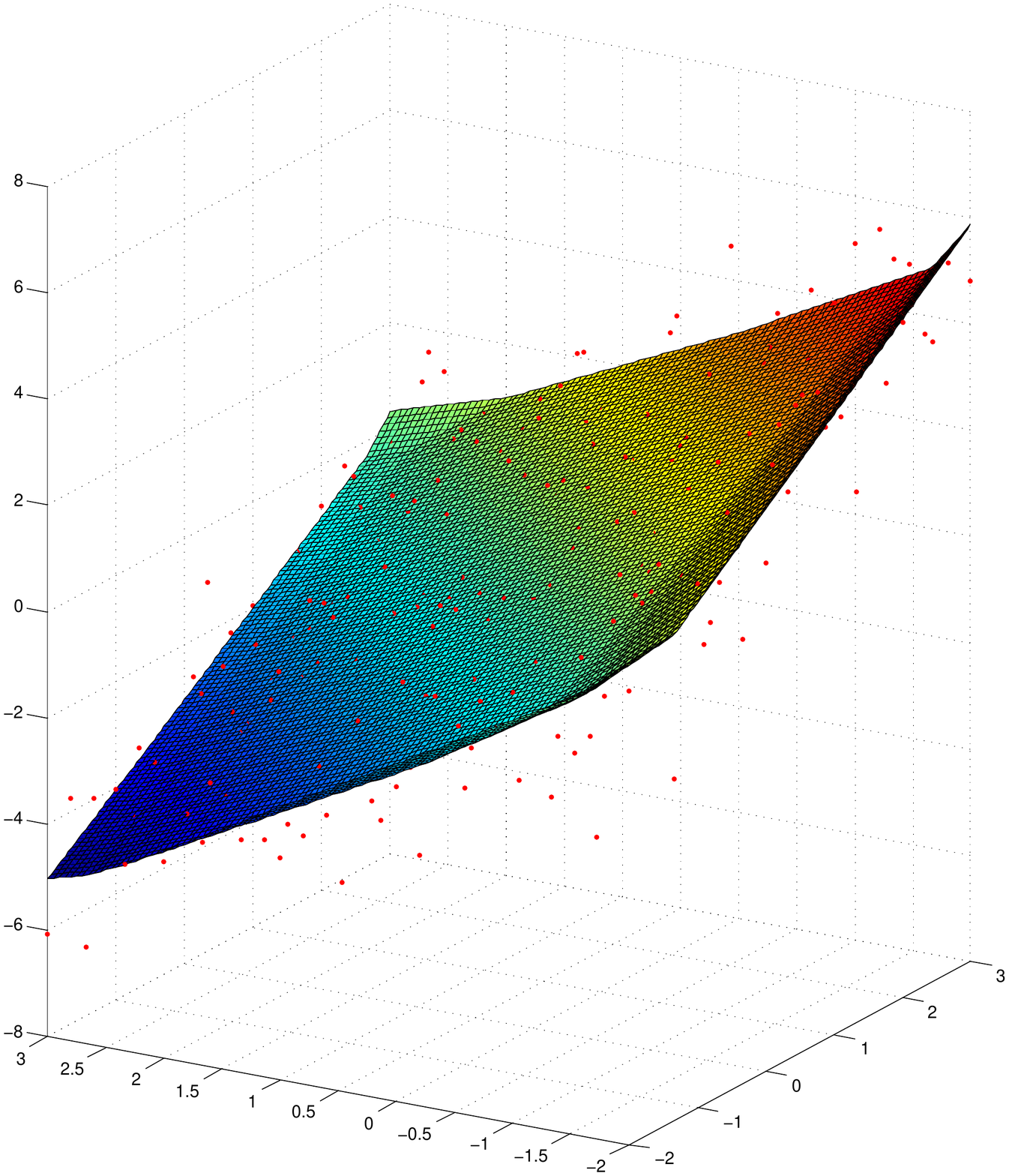}
\caption{The scatter plot and nonparametric least squares estimator of the convex regression function when (a) $\phi(x)=|x|^2$ (left panel); (b) $\phi(x) = -x^1 + x^2$ (right panel).}\label{figura6}
\end{figure}
Here we present two simulated examples to illustrate the computation of the estimator when $d=2$. The first one, depicted in Figure \ref{figura6}a corresponds to the case where $\phi(x) = |x|^2$. Figure \ref{figura6}b shows the convex function estimator when the regression function is the hyperplane $\phi(x) = -x^1 + x^2$. In both cases, $n=256$ observations were used and the errors were assumed to be i.i.d. from the standard normal distribution. All the computations were carried out using the MOSEK optimization toolbox for Matlab and the run time for each example was less than 2 minutes in a standard desktop PC. We refer the reader to \cite{kuo} for additional numerical examples (although the examples there are for the estimation of concave, componentwise nondecreasing functions, the computational complexities are the same).

\subsection{The componentwise nonincreasing case}\label{ccni}
We now consider the case where the regression function $\phi$ is assumed to be convex and componentwise nonincreasing. The developments here are quite similar to those in the convex case, so we omit some of the details. Given the observed values $(X_1,Y_1),\ldots,(X_n,Y_n)$, we write $\mathcal{Q}_\mathcal{X}$ for the collection of all vectors $z\in\mathbb{R}^n$ for which there is a convex, componentwise nonincreasing function $\psi$ satisfying $\psi(X_j) = z^j$ for every $j=1,\ldots,n$. We will denote by $\mathbb{R}^d_+$ and $\mathbb{R}^d_-$, respectively, the nonnegative and nonpositive orthants of $\mathbb{R}^d$. We now have the following characterizations.
\begin{lemma}\label{l9}
Let $z\in\mathbb{R}^n$. Then, $z\in\mathcal{Q}_\mathcal{X}$ if and only if the following holds for every $j=1,\ldots,n$:
\begin{equation}\nonumber
z^j = \inf\left\{ \sum_{k=1}^n \theta^k z^k : \sum_{k=1}^n \theta^k = 1,\ \vartheta + \sum_{k=1}^n \theta^k X_k = X_j,\ \theta\geq 0,\ \theta\in\mathbb{R}^n, \vartheta\in\mathbb{R}_+^d \right\}.
\end{equation}
\end{lemma}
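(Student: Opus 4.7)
The plan is to mimic the proof of Lemma \ref{l2}, with the auxiliary infimum function replaced by a variant that absorbs the componentwise-nonincreasing constraint through the slack variable $\vartheta \in \mathbb{R}^d_+$. Specifically, for $x \in \mathbb{R}^d$ I would define
\[ g(x) = \inf\left\{\sum_{k=1}^n \theta^k z^k : \sum_{k=1}^n \theta^k = 1,\ \vartheta + \sum_{k=1}^n \theta^k X_k = x,\ \theta\geq 0,\ \theta \in \mathbb{R}^n,\ \vartheta \in \mathbb{R}^d_+\right\}, \]
with the convention $\inf(\emptyset) = +\infty$, and establish three properties. First, $g$ is convex: this follows from the same value-function-of-a-linear-program argument used for Lemma \ref{l1} in the Appendix, since the feasible set (in $(\theta,\vartheta)$) remains a polyhedron that varies linearly with the right-hand side $x$; the extra $\vartheta$ simply adjoins additional coordinates. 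Second, $g$ is componentwise nonincreasing: if $x \leq y$ componentwise and $(\theta,\vartheta)$ is feasible for $x$, then $(\theta,\vartheta + (y-x))$ is feasible for $y$ (using $y - x \in \mathbb{R}^d_+$) with identical objective, hence $g(y) \leq g(x)$. Third, $g(X_j) \leq z^j$ for each $j$, as witnessed by the unit-vector choice $\theta^k = \delta_{kj}$ and $\vartheta = 0$.

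For the ``if'' direction, if $z^j$ equals the stated infimum for every $j$, then $z^j = g(X_j)$ for all $j$, and by the first two properties $g$ itself serves as a convex, componentwise nonincreasing function realizing $z$, so $z \in \mathcal{Q}_\mathcal{X}$. For the ``only if'' direction, suppose $z \in \mathcal{Q}_\mathcal{X}$ is witnessed by some convex, componentwise nonincreasing function $\psi$ with $\psi(X_k) = z^k$, and assume for contradiction that $g(X_j) < z^j$ for some $j$. Choose $\theta_0 \geq 0$ summing to $1$ and $\vartheta_0 \in \mathbb{R}^d_+$ with $\vartheta_0 + \sum_k \theta_0^k X_k = X_j$ and $\sum_k \theta_0^k z^k < z^j$. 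Setting $w := \sum_k \theta_0^k X_k = X_j - \vartheta_0 \leq X_j$ componentwise, convexity of $\psi$ yields $\psi(w) \leq \sum_k \theta_0^k \psi(X_k) = \sum_k \theta_0^k z^k < z^j$, while componentwise monotonicity of $\psi$ gives $\psi(w) \geq \psi(X_j) = z^j$, a contradiction.

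The main conceptual point, and the one bit that goes beyond rewording Lemma \ref{l2}, is that both hypotheses on $\psi$ are used simultaneously in the final contradiction: convexity supplies the upper bound on $\psi(w)$ via the coefficients $\theta_0^k$, while monotonicity supplies the lower bound via the inequality $w \leq X_j$ that is encoded by $\vartheta_0 \geq 0$. Convexity of $g$ and the ``if'' direction then reduce to a direct appeal to Lemma \ref{l1} (with the trivial observation that the extra slack $\vartheta$ does not disturb the argument), so they are essentially bookkeeping.
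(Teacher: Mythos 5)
Your proof is correct and follows essentially the same route as the paper: the paper's own proof of Lemma \ref{l9} simply says to repeat the argument of Lemma \ref{l2} with the auxiliary function $h$ (your $g$) and Lemma \ref{l8} in place of Lemma \ref{l1}, and your write-up is exactly that, with the properties of $h$ (convexity, componentwise monotonicity via the slack $\vartheta$, and $h(X_j)\leq z^j$) spelled out rather than cited from the Appendix. The final contradiction combining convexity with monotonicity at $w = X_j - \vartheta_0 \leq X_j$ matches the intended argument.
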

\begin{proof} The proof is very similar to that of Lemma \ref{l2}. The difference being that we use Lemma \ref{l8} and the function
\[ h(x) = \inf\left\{ \sum_{k=1}^n \theta^k z^k : \sum_{k=1}^n \theta^k = 1,\ \vartheta + \sum_{k=1}^n \theta^k X_k = x,\ \theta\geq 0,\ \theta\in\mathbb{R}^n, \vartheta\in\mathbb{R}^d_+ \right\} \] instead of using Lemma \ref{l1} and the function $g$.
\end{proof}

The analogous dual characterization here is given in the following lemma. Its proof is just an application of the duality theorem of linear programming, so we omit it.

\begin{lemma}\label{l10}
Let $z\in\mathbb{R}^n$. Then, $z\in\mathcal{Q}_\mathcal{X}$ if and only if for every $j=1,\ldots,n$ we have
\begin{equation}\nonumber
z^j = \sup\left\{ \langle\xi,X_j\rangle + \eta : \langle\xi,X_k\rangle + \eta \leq z^k \ \forall\ k=1,\ldots,n, \ \xi \in\mathbb{R}^d_{-}, \ \eta\in\mathbb{R} \right\}.
\end{equation}
Moreover, $z\in\mathcal{Q}_\mathcal{X}$ if and only if there exist vectors $\xi_1,\ldots,\xi_n\in\mathbb{R}_-^d$ such that
\begin{equation}\nonumber
\langle \xi_j, X_k-X_j \rangle \leq z^k - z^j\ \ \forall\ k,j\in\{1,\ldots,n\}.
\end{equation}
\end{lemma}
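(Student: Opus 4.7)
The plan is to mirror the proof of Lemma \ref{l3} almost verbatim, replacing the primal characterization of Lemma \ref{l2} by the one in Lemma \ref{l9}. For each $j=1,\ldots,n$ the primal LP in Lemma \ref{l9} has variables $\theta\in\mathbb{R}^n$ and $\vartheta\in\mathbb{R}^d$ constrained to be componentwise nonnegative; I would form its standard LP dual, attaching a free variable $\eta\in\mathbb{R}$ to the scalar equality $\sum_k\theta^k=1$ and a free variable $\xi\in\mathbb{R}^d$ to the vector equality $\vartheta+\sum_k\theta^k X_k=X_j$. The nonnegativity of $\vartheta$ forces the sign restriction $\xi\leq 0$ (i.e., $\xi\in\mathbb{R}^d_-$), while the nonnegativity of $\theta$ produces the inequalities $\langle\xi,X_k\rangle+\eta\leq z^k$ for all $k$. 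This is precisely the LP displayed in the first claim of the lemma.

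The first assertion then falls out of the strong duality theorem for linear programming. To apply it I first observe that the primal LP is feasible (take $\theta^k=\delta_{kj}$ and $\vartheta=0$) and that its objective $\sum_k\theta^k z^k$ is bounded on the feasible set since $\theta$ lies in the unit simplex; hence both primal and dual have optimal solutions with equal values. Combining this with Lemma \ref{l9} shows that $z\in\mathcal{Q}_\mathcal{X}$ iff for every $j$ the dual optimum equals $z^j$, which is exactly the stated formula.

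The second assertion I would handle exactly as in Lemma \ref{l3}. In the forward direction, for each $j$ I pick an optimal dual pair $(\xi_j,\eta_j)$ with $\xi_j\in\mathbb{R}^d_-$; since the dual value is $z^j$, I get $\eta_j=z^j-\langle\xi_j,X_j\rangle$, and substituting this into the dual feasibility $\langle\xi_j,X_k\rangle+\eta_j\leq z^k$ yields the desired $\langle\xi_j,X_k-X_j\rangle\leq z^k-z^j$. Conversely, given $\xi_1,\ldots,\xi_n\in\mathbb{R}^d_-$ satisfying those inequalities, I set $\eta_j=z^j-\langle\xi_j,X_j\rangle$ so that $(\xi_j,\eta_j)$ is dual-feasible with objective value $z^j$; the primal point $\theta^k=\delta_{kj}$, $\vartheta=0$ is feasible with the same value $z^j$, so weak duality pins the dual optimum at $z^j$, and Lemma \ref{l9} gives $z\in\mathcal{Q}_\mathcal{X}$.

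The only real bookkeeping issue, and the only essential difference from Lemma \ref{l3}, is tracking how the extra nonnegative primal variable $\vartheta$ in Lemma \ref{l9} produces precisely the sign restriction $\xi\in\mathbb{R}^d_-$ on the dual slope (and nothing else); this is where the monotonicity of $\phi$ enters through the LP duality. Once that is verified, the rest is a direct translation of the argument already given for Lemma \ref{l3}, which is presumably why the authors chose to omit the details.
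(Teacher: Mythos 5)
Your proposal is correct and is exactly the argument the paper intends: the authors omit the proof precisely because it is the proof of Lemma \ref{l3} repeated with the primal LP of Lemma \ref{l9}, where the extra nonnegative variable $\vartheta$ yields the dual sign constraint $\xi\in\mathbb{R}^d_-$ and the rest follows from LP (weak and strong) duality as you describe.
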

Just as in the previous case, we can use both characterizations to show the existence and uniqueness of the vector
\[W_n = \argmin_{z\in\mathcal{Q}_\mathcal{X}}\left\{ \sum_{k=1}^n \left|Y_k - z^k\right|^2\right\}\]
and then define the nonparametric least squares estimator by
{\small \begin{equation}
\function{\hat{\varphi}_n}{x} = \inf\left\{ \sum_{k=1}^n \theta^k W_n^k : \sum_{k=1}^n \theta^k = 1, \vartheta + \sum_{k=1}^n \theta^k X_k = x, \theta \in \mathbb{R}^n_+, \vartheta\in\mathbb{R}_+^d \right\}. \nonumber
\end{equation}}
Here, the vector $W_n$ can also be computed by solving the corresponding quadratic program
\begin{equation}\nonumber
\begin{array}{rc}
\min &  \sum_{k=1}^n |Y_k - z^k|^2 \\
\textrm{subject to} & \langle \xi_k , X_j - X_k \rangle \leq z^j - z^k\ \ \forall\ k,j=1,\ldots,n \\
& \xi_1,\ldots,\xi_n\in\mathbb{R}_-^d, z\in\mathbb{R}^n.
\end{array}
\end{equation}
which differs from the program (\ref{ec7}) just because here the $\xi_j$'s have to be nonpositive. The estimator enjoys analogous finite dimensional properties to those listed in Lemma \ref{l6}. For the sake of completeness, we include them in the following lemma.

\begin{lemma}\label{l11}
Let $\hat{\varphi}_n$ be the convex, componentwise nonincreasing least squares estimator obtained from the sample $(X_1,Y_1),\ldots,(X_n,Y_n)$. Then,
\begin{enumerate}[(i)]
\item $\displaystyle \sum_{k=1}^n (\psi(x_k) - \hat{\varphi}_n(X_k))(Y_k - \hat{\varphi}_n(X_k))\leq 0$ for any convex, componentwise nonincreasing function $\psi$ which is finite on $\conv{X_1,\ldots,X_n}$;
\item $\displaystyle \sum_{k=1}^n \hat{\varphi}_n(X_k)(Y_k - \hat{\varphi}_n(X_k))= 0$;
\item $\displaystyle \sum_{k=1}^n Y_k = \sum_{k=1}^n \hat{\varphi}_n(X_k)$;
\item the set on which $\hat{\varphi}_n < \infty$ is $\conv{X_1,\ldots,X_n} + \mathbb{R}_+^d$;
\item for any $x\in\mathbb{R}^d$ the map $(X_1,\ldots,X_n,Y_1,\ldots,Y_n)\hookrightarrow\hat{\varphi}_n(x)$ is a Borel-measurable function from $\mathbb{R}^{n(d+1)}$ into $\mathbb{R}$.
\end{enumerate}
\end{lemma}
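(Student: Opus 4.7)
The strategy is to mirror the proof of Lemma \ref{l6} step by step, making the minor adjustments needed to accommodate the componentwise nonincreasing constraint. The essential preparatory step is to establish the analog of Lemma \ref{l4} for $\mathcal{Q}_\mathcal{X}$: it is a convex cone because positive scalar multiples and sums of convex, componentwise nonincreasing functions are again convex and componentwise nonincreasing, and it is closed by the same neighborhood argument as in Lemma \ref{l4}, applied to the function $h$ of Lemma \ref{l9} in place of $g$. This yields that $W_n$ is the unique projection of $(Y_1,\ldots,Y_n)$ onto the closed convex cone $\mathcal{Q}_\mathcal{X}$.

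With this in place, item (i) is an immediate consequence of Moreau's decomposition theorem: for any convex, componentwise nonincreasing $\psi$ finite on $\conv{X_1,\ldots,X_n}$, the vector $(\psi(X_1),\ldots,\psi(X_n))$ lies in $\mathcal{Q}_\mathcal{X}$, and the characterizing inequality of the projection supplies the desired statement since $\hat{\varphi}_n(X_k)=W_n^k$. For (ii) I would substitute $\psi=\kappa\hat{\varphi}_n$ into (i) and let $\kappa$ range over $(0,\infty)$; because $\mathcal{Q}_\mathcal{X}$ is a cone, $\kappa\hat{\varphi}_n$ is still convex and componentwise nonincreasing, and the resulting inequality $(\kappa-1)\sum_k\hat{\varphi}_n(X_k)(Y_k-\hat{\varphi}_n(X_k))\leq 0$ (for $\kappa>1$ and $0<\kappa<1$ respectively) forces the sum to vanish. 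Item (iii) follows from (i) applied with $\psi=\hat{\varphi}_n\pm 1$, noting that the addition of a constant preserves both convexity and componentwise monotonicity.

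For (iv), I would observe that $\hat{\varphi}_n(x)$ is finite precisely when the feasibility set of the defining infimum is nonempty. Feasibility forces $x=\vartheta+\sum_k\theta^kX_k$ with $\theta$ in the unit simplex and $\vartheta\in\mathbb{R}_+^d$, which is exactly the condition $x\in\conv{X_1,\ldots,X_n}+\mathbb{R}_+^d$. When this holds, $\theta$ ranges over a compact set and the linear objective $\sum_k\theta^kW_n^k$ depends only on $\theta$, so the infimum is finite (in fact attained). Outside this set the feasibility set is empty and the infimum is $+\infty$ by convention.

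Finally, (v) follows as in Lemma \ref{l6}(v): $W_n$ is the solution of a positive semidefinite quadratic program whose data are the $X_k$'s and $Y_k$'s, and $\hat{\varphi}_n(x)$ is the optimal value of a linear program whose data are $(X_1,\ldots,X_n,W_n,x)$; both can be exhibited as limits of sequences obtained from arithmetic operations on the input (active set, interior point, or simplex methods), giving Borel measurability. I expect the only step requiring genuine care to be the closedness of $\mathcal{Q}_\mathcal{X}$, since the extra orthant variable $\vartheta$ in Lemma \ref{l9} slightly complicates the neighborhood argument; everything else is a clean translation from the convex case.
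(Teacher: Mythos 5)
Your proposal is correct and follows exactly the route the paper intends: the paper omits the proof of Lemma \ref{l11} precisely because it is the verbatim translation of Lemmas \ref{l4} and \ref{l6} (Moreau projection onto the closed convex cone $\mathcal{Q}_\mathcal{X}$, scaling and constant shifts for (ii)--(iii), the feasibility set of the defining infimum for (iv), and QP/LP measurability for (v)), with Lemmas \ref{l8}--\ref{l10} supplying the componentwise-nonincreasing analogues of the primal and dual characterizations. Your closedness argument for $\mathcal{Q}_\mathcal{X}$ via the function $h$ goes through with no real complication, since the perturbation in the neighborhood argument touches only the $z$-coordinates while the fixed pair $(\theta_0,\vartheta_0)$ and the monotonicity of $\psi$ yield the same contradiction.
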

\section{Consistency of the least squares estimator}\label{sec2}
The main goal of this paper is to show that in an appropriate setting the nonparametric least squares estimator $\hat{\phi}_n$ described above is consistent for estimating the convex function $\phi$ on the set $\mathfrak{X}$. In this context, we will prove the consistency of $\hat{\phi}_n$ in both, fixed and stochastic design regression settings.

Before proceeding any further we would like to introduce some notation. For any Borel set $\mathfrak{X}\subset\mathbb{R}^d$ we will denote by $\mathcal{B}_\mathfrak{X}$ the $\sigma$-algebra of Borel subsets of $\mathfrak{X}$. Given a sequence of events $(A_n)_{n=1}^\infty$ we will be using the notation $[A_n\ \textrm{i.o.}]$ and $[A_n\ \textrm{a.a.}]$ to denote $\lsup A_n$ and $\linf A_n$, respectively.

Now, consider a convex function $f:\mathbb{R}^d\rightarrow\overline{\mathbb{R}}$. This function is said to be proper if $f(x)>-\infty$ for every $x\in\mathbb{R}^d$. The effective domain of $f$, denoted by $\textrm{Dom}(f)$, is the set of points $x\in\mathbb{R}^d$ for which $f(x)<\infty$. The subdifferential of $f$ at a point $x\in\mathbb{R}^d$ is the set $\partial f(x)\subset\mathbb{R}^d$ of all vectors $\xi$ satisfying the inequality
\[\langle \xi, h\rangle \leq f(x+h)-f(x)\ \ \ \forall\ h\in\mathbb{R}^d.\] The elements of $\partial f(x)$ are called subgradients of $f$ at $x$ (see \cite{rca}). For a set $A\subset\mathbb{R}^d$ we denote by $A^\circ$, $\overline{A}$ and $\partial A$ its interior, closure and boundary, respectively. We write $\textrm{Ext}(A)=\mathbb{R}^d\setminus\overline{A}$ for the exterior of the set $A$ and $ \textrm{diam}(A) := \sup_{x,y \in A} |x - y|$ for the diameter of $A$. We also use the sup-norm notation, i.e., for a function $g:\mathbb{R}^d\rightarrow \mathbb{R}$ we write $\|g\|_A = \sup_{x\in A}|g(x)|$.

To avoid measurability issues regarding some sets, specially those involving the random set-valued functions $\{\partial \hat{\phi}_n(x)\}_{x\in\mathfrak{X}^\circ}$, we will use the symbols $\mathbf{P}_*$ and $\mathbf{P}^*$ to denote inner and outer probabilities, respectively. We refer the reader to \cite{vw}, pages 6-15, for the basic properties of inner and outer probabilities. In this context, a sequence of (not necessarily measurable) functions $(\Psi_n)_{n=1}^\infty$ from a probability space $(\Omega,\mathcal{F},\mathbf{P})$ into $\mathbb{R}$ is said to converge to a function $\Psi$ \textit{almost surely} (see \cite{vw}, Definition 1.9.1-(iv), page 52), written $\Psi_n\cas\Psi$, if $\ip{\Psi_n\rightarrow\Psi}=1$. We will use the standard notation $\p{A}$ for the probabilities of all events $A$ whose measurability can be easily inferred from the measurability of the random variables $\{\hat{\phi}_n(x)\}_{x\in\mathfrak{X}}$, established in Lemma \ref{l6}.

Our main theorems hold for both, fixed and stochastic design schemes, and the proofs are very similar. They differ only in minor steps. Therefore, for the sake of simplicity, we will denote the observed values of the regressor variables always with the capital letters $X_n$. For any Borel set $\texttt{X}\subset\mathbb{R}^d$, we write
\[N_n(\texttt{X}) = \# \{1\le j \le n: X_j\in\texttt{X}\}.\]
The quantities $X_n$ and $N_n(\texttt{X})$ are non-random under the fixed design but random under the stochastic one.

\subsection{Fixed Design}\label{disedet}
In a ``fixed design'' regression setting we assume that the regressor values are non-random and that all the uncertainty in the model comes from the response variable. We will now list a set of assumptions for this type of design. The one-dimensional case has been proven, under different regularity conditions, in \cite{hp76}.
\begin{enumerate}
\item[(A1)] We assume that we have a sequence $\left(X_n,Y_n\right)_{n=1}^\infty$ satisfying
\[ Y_k=\phi(X_k) + \epsilon_k\]
where $(\epsilon_n)_{n=1}^\infty$ is an i.i.d. sequence with $\e{\epsilon_j} = 0$, $\e{\epsilon_j^2} = \sigma^2<\infty$ and $\phi:\mathbb{R}^d\rightarrow\mathbb{R}$ is a proper convex function.
\item[(A2)] The non-random sequence $(X_n)_{n=1}^\infty$ is contained in a closed, convex set $\mathfrak{X}\subset\mathbb{R}^d$ with $\mathfrak{X}^\circ \neq \emptyset$ and $\mathfrak{X}\subset \textrm{Dom}(\phi)$.
\item[(A3)] We assume the existence of a Borel measure $\nu$ on $\mathfrak{X}$ satisfying:
    \begin{enumerate}[(i)]
    \item $\{\texttt{X}\in\mathcal{B}_\mathfrak{X} : \nu(\texttt{X})=0\} = \{\texttt{X}\in\mathcal{B}_\mathfrak{X} : \texttt{X} \textrm{ has Lebesgue measure } 0\}$.
    \item $\frac{1}{n}N_n (\texttt{X})\rightarrow \nu(\texttt{X})$ for any open rectangle $\texttt{X}\subset\mathfrak{X}^\circ$.
    \end{enumerate}
\end{enumerate}
Condition (A1) may be replaced by the following:
\begin{enumerate}
\item[(A4)] We assume that we have a sequence $\left(X_n,Y_n\right)_{n=1}^\infty$ satisfying \[ Y_k=\phi(X_k) + \epsilon_k\] where $\phi:\mathbb{R}^d\rightarrow\mathbb{R}$ is a proper convex function and $(\epsilon_n)_{n=1}^\infty$ is an independent sequence of random variables satisfying
\begin{enumerate}[(i)]
\item $\e{\epsilon_n} = 0$ $\forall$ $n\in\mathbb{N}$ and $ \linf  \frac{1}{n}\sum_{k=1}^n \e{|\epsilon_k|} > 0$.
\item $ \sum_{n=1}^\infty \frac{\var{\epsilon_n^2}}{n^2} <\infty$.
\item $ \sup_{n\in\mathbb{N}}\{\e{\epsilon_n^2}\} <\infty$.
\end{enumerate}
Under these conditions we define $\sigma^2 := \lsup_{n\rightarrow\infty}\frac{1}{n}\sum_{j=1}^n \e{\epsilon_j^2}$.
\end{enumerate}
The raison d'etre of condition (A4) is to allow the variance of the error terms to depend on the regressors. We make the distinction between (A1) and (A4) because in the case of i.i.d. errors it is enough to require a finite second moment to ensure consistency.

\subsection{Stochastic Design}\label{disestoc}
In this setting we assume that $(X_n,Y_n)_{n=1}^\infty$ is an i.i.d. sequence from some Borel probability measure $\mu$ on $\mathbb{R}^{d+1}$. Here we make the following assumptions on the measure $\mu$:
\begin{enumerate}[(I)]
\item[(A5)] There is a closed, convex set $\mathfrak{X}\subset\mathbb{R}^d$ with $\mathfrak{X}^\circ \neq \emptyset$ such that $\mu (\mathfrak{X}\times\mathbb{R}) =1$. Also,
    \[ \int_{\mathfrak{X}\times\mathbb{R}} y^2 \mu(dx,dy) <\infty. \]
\item[(A6)] There is a proper convex function $\phi:\mathbb{R}^d\rightarrow\mathbb{R}$ with $\mathfrak{X}\subset\textrm{Dom}(\phi)$ such that whenever $(X,Y)\sim\mu$ we have $\e{Y-\phi(X)|X} = 0$ and $\e{|Y - \phi(X)|^2} = \sigma^2 <\infty$. Thus, $\phi$ is the regression function.
\item[(A7)] Denoting by $\nu(\cdot) = \mu((\cdot)\times\mathbb{R})$ the $x$-marginal of $\mu$, we assume that \[\{\texttt{X}\in\mathcal{B}_\mathfrak{X} : \nu(\texttt{X})=0\} = \{\texttt{X}\in\mathcal{B}_\mathfrak{X} : \texttt{X} \textrm{ has Lebesgue measure } 0\}.\]
\end{enumerate}

We wish to point out some conclusions that one can draw from these assumptions. Consider the class of functions \[\mathcal{K}_\mu := \left\{ \psi:\mathbb{R}^d \rightarrow \mathbb{R}\  | \ \psi \textrm{ is  convex with } \int |\psi(x)|^2 \nu(dx) < \infty \right\}. \]
Then for any $\texttt{X}\subset\mathfrak{X}$ the following holds
\[ \int_{\texttt{X}\times\mathbb{R}} \psi(x)(y-\phi(x))\mu(dx,dy) = 0 \ \ \forall \psi \ \in \mathcal{K}_\mu;\]
so we get that $\phi$ is in fact the element of $\mathcal{K}_\mu$ which is the closest to $Y$ in the Hilbert space $\mathbb{L}^2(\texttt{X} \times\mathbb{R}, \mathcal{B}_{\texttt{X} \times\mathbb{R}},\mu)$. This follows from Moreau's decomposition theorem (see the proof of Lemma \ref{l6}).

Additionally, conditions \{A5-A7\} allow for stochastic dependency between the error variable $Y-\phi(X)$ and the regressor $X$. Although some level of dependency can be put to satisfy conditions \{A2-A4\}, the measure $\mu$ allows us to take into account some cases which wouldn't fit in the fixed design setting (even by conditioning on the regressors).

\subsection{Main results}\label{principales}
We can now state the two main results of this paper. The first result shows that assuming only the convexity of $\phi$, the least squares estimator can be used to consistently estimate both $\phi$ and its subdifferentials $\partial \phi(x)$.
\begin{thm}\label{teo1}
Under any of \{A1-A3\}, \{A2-A4\} or \{A5-A7\} we have,
\begin{enumerate}[(i)]
\item $\displaystyle \p{\sup_{x\in\texttt{X}}\{|\hat{\phi}_n(x) - \phi(x)|\} \rightarrow 0\ \textrm{ for any compact set } \texttt{X}\subset\mathfrak{X}^\circ} = 1. $
\item For every $x\in\mathfrak{X}^\circ$ and every $\xi\in\mathbb{R}^d$
    \[\displaystyle \lsup_{n\rightarrow \infty} \lim_{h \downarrow 0} \frac{\hat{\phi}_n(x + h\xi) - \hat{\phi}_n(x)}{h} \leq \lim_{h \downarrow 0} \frac{\phi(x + h\xi) - \phi(x)}{h}\ \textrm{almost surely}.\]
\item Denoting by \textbf{B} the unit ball (w.r.t. the Euclidian norm) we have
\[ \ip{\partial \hat{\phi}_n (x) \subset \partial \phi (x) + \epsilon\textbf{B}\ \ \emph{a.a.}} =1\ \ \ \forall\ \epsilon>0, \ \forall\ x\in\mathfrak{X}^\circ.\]
\item If $\phi$ is differentiable at $x\in\mathfrak{X}^\circ$, then
\[ \sup_{\xi\in\partial \hat{\phi}_n (x)}\{ |\xi - \nabla \phi(x)|\}\cas 0. \]
\end{enumerate}
\end{thm}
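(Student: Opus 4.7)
The plan is to prove $(i)$ first, since uniform convergence on compact subsets of $\mathfrak{X}^\circ$ drives parts $(ii)$--$(iv)$ via standard convex-analytic arguments. For $(i)$, I would proceed by subsequential compactness. The first task is to establish that $\{\hat{\phi}_n\}$ is almost surely uniformly bounded on every compact $\texttt{X} \subset \mathfrak{X}^\circ$. An $\ell^2$ bound comes cheaply from optimality: since $(\phi(X_1),\ldots,\phi(X_n)) \in \mathcal{K}_\mathcal{X}$, Lemma \ref{l6}$(i)$ together with projection geometry gives $\sum_{k=1}^n (\hat{\phi}_n(X_k)-\phi(X_k))^2 \leq |\sum_{k=1}^n (\hat{\phi}_n(X_k)-\phi(X_k))\epsilon_k|$, whose right-hand side is controlled in terms of $\sum_k \epsilon_k^2$ (itself $O(n)$ a.s.\ by SLLN under any of the three hypothesis sets) via Cauchy--Schwarz. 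To convert this into pointwise upper bounds at $x \in \texttt{X}$, I would use $(A3)(ii)$ (or Glivenko--Cantelli over rectangles in the stochastic design) together with the fact that $\nu$ charges every open set: write $x$ as a convex combination of sample points in a slightly larger ball and invoke $\hat{\phi}_n(x) \leq \sum \theta^k Z_n^k$. A pointwise upper bound on an open neighborhood of $\texttt{X}$ combined with convexity automatically yields a uniform Lipschitz (hence lower) bound on $\texttt{X}$.

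Granted uniform boundedness, an Arzel\`a--Ascoli-type argument for pointwise-bounded families of convex functions on convex open sets shows that every subsequence has a further subsequence converging uniformly on compact subsets of $\mathfrak{X}^\circ$ to some convex $\phi^*$. To identify this limit, I would pass to the limit in the Moreau inequality of Lemma \ref{l6}$(i)$,
\[ \frac{1}{n}\sum_{k=1}^n (\psi(X_k) - \hat{\phi}_n(X_k))(Y_k - \hat{\phi}_n(X_k))\leq 0, \]
taking $\psi = \phi$ and exploiting uniform convergence of $\hat{\phi}_n$ on compacts together with SLLN-type statements for empirical integrals against $\nu$ and for $\epsilon$-averages. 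This should yield $\int_\texttt{X} (\phi-\phi^*)^2 \, d\nu \leq 0$ for every compact $\texttt{X} \subset \mathfrak{X}^\circ$, so by $(A3)(i)$ or $(A7)$ we obtain $\phi^* = \phi$ Lebesgue-a.e.\ on $\mathfrak{X}^\circ$. Continuity of convex functions on the interior of their effective domains upgrades this to $\phi^* \equiv \phi$ on $\mathfrak{X}^\circ$, and uniqueness of the subsequential limit promotes this to convergence of the whole sequence.

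For $(ii)$, fix $h_0>0$ so that $x+h_0\xi$ lies in a compact subset of $\mathfrak{X}^\circ$; since the right directional derivative of a convex function is the infimum of its forward difference quotients, $\lim_{h\downarrow 0}\frac{\hat{\phi}_n(x+h\xi)-\hat{\phi}_n(x)}{h} \leq \frac{\hat{\phi}_n(x+h_0\xi)-\hat{\phi}_n(x)}{h_0}$, and applying $\lsup_n$ together with $(i)$ on the right, then $\inf_{h_0>0}$, gives the claim. For $(iii)$, the Lipschitz bound from $(i)$ shows $\partial \hat{\phi}_n(x)$ is eventually uniformly bounded; if the claimed inclusion failed along a subsequence, any limit point $\xi^*$ of $\xi_{n_j}\in\partial\hat{\phi}_{n_j}(x)$ with $\mathrm{dist}(\xi_{n_j},\partial\phi(x))>\epsilon$ would, by $(ii)$ applied along a countable dense set of directions and the subgradient characterization $\langle\xi,\zeta\rangle\leq f'(x;\zeta)$, satisfy $\langle \xi^*,\zeta\rangle\leq\phi'(x;\zeta)$ for all $\zeta$, yielding $\xi^* \in \partial\phi(x)$, a contradiction. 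Part $(iv)$ follows from $(iii)$ specialized to $\partial\phi(x)=\{\nabla\phi(x)\}$ via a diagonal argument over $\epsilon=1/k$.

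The main obstacle I expect is the uniform boundedness step in $(i)$. The upper bound at a fixed $x$ requires both that enough data points surround $x$ (secured by $(A3)(ii)$ or Glivenko--Cantelli) and that $\hat{\phi}_n$ at those points cannot concentrate on pathologically large values; the empirical $\ell^2$ bound must be combined with positivity of $\nu$ on open sets to rule out that $Z_n^k$ explodes on a $\nu$-positive set. This, along with the Arzel\`a--Ascoli principle for convex functions and the passage to the limit in the Moreau inequality, is presumably among the technical lemmas announced for Section \ref{sec4}.
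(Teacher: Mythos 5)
Your overall route for part $(i)$ — empirical $\ell^2$ control from the normal-equation inequality, uniform boundedness, an Arzel\`a--Ascoli extraction of a locally uniform subsequential limit $\phi^*$, and identification of $\phi^*$ with $\phi$ by passing to the limit in Lemma \ref{l6}$(i)$ — is genuinely different from the paper's (which goes through a one-point closeness lemma, explicit geometric lemmas, an entropy bound and a uniform LLN, and a cell decomposition), and your derivations of $(ii)$--$(iv)$ from $(i)$ are fine and match the paper's appeal to standard convex analysis. However, the identification step has a real gap. Taking $\psi=\phi$ in Lemma \ref{l6}$(i)$ gives $\frac1n\sum_k(\hat{\phi}_n-\phi)(X_k)^2\leq\frac1n\sum_k(\hat{\phi}_n-\phi)(X_k)\,\epsilon_k$, and you propose to kill the right-hand side with ``SLLN-type statements for $\epsilon$-averages.'' No law of large numbers for a fixed function applies here: $\hat{\phi}_n$ is a function of $(\epsilon_1,\ldots,\epsilon_n)$, and both the subsequence and the limit $\phi^*$ are $\omega$-dependent. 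What is required is uniformity of the cross term over the class of convex functions that are uniformly bounded with uniformly bounded subgradients — exactly the content of the paper's Lemmas \ref{l19} and \ref{l20} (covering-number bound for the convex class, a blocking/Borel--Cantelli argument for the fixed designs, and outer-probability care for measurability). This is the technical heart of the theorem and cannot be waved through. In addition, the Moreau inequality sums over \emph{all} design points, including those near $\partial\mathfrak{X}$ where you have no uniform control on $\hat{\phi}_n$; localizing to a compact $\texttt{X}$ forces an exhaustion $\texttt{X}_m\uparrow\mathfrak{X}^\circ$ with $\nu(\texttt{X}_m)\to 1$ together with a second-moment argument for the excluded points (under \{A2--A4\} this is the paper's Lemma \ref{lfdp}), as in Lemma \ref{l21}; your sketch does not address this boundary contribution at all.

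There is also a flaw in the boundedness step: the claim that a pointwise upper bound on a neighborhood of $\texttt{X}$ ``combined with convexity automatically yields a uniform Lipschitz (hence lower) bound'' is false as stated — the constant functions $-n$ are convex, bounded above and Lipschitz, yet not uniformly bounded below. An upper bound on a ball yields a lower bound (and then a Lipschitz bound) only after anchoring the function from below at interior points. Your empirical $\ell^2$ bound can supply such anchors, since only a small fraction of design points can have $|\hat{\phi}_n-\phi|$ large, but converting ``good points exist in every ball of positive $\nu$-measure'' into a two-sided uniform bound on a compact set is precisely the nontrivial geometric work of the paper's Lemmas \ref{l12}--\ref{l14} and \ref{l16}--\ref{l18}: one must position good points so that the target points lie in their convex hull, and show that a very negative value at one interior point forces, by convexity, a detectable violation of the band elsewhere. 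So your skeleton is completable, but the two steps you treat as routine — uniform control of the error cross term and the uniform two-sided bound with the attendant subgradient bound — constitute the actual substance of the proof.
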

Our second result states that assuming differentiability of $\phi$ on the entire $\mathfrak{X}^\circ$ allows us to use the subdifferentials of the least squares estimator to consistently estimate $\nabla \phi$ uniformly on compact subsets of $\mathfrak{X}^\circ$.
\begin{thm}\label{teo2}
If $\phi$ is differentiable on $\mathfrak{X}^\circ$, then under any of \{A1-A3\}, \{A2-A4\} or \{A5-A7\} we have,
\[ \ip{\sup_{\begin{subarray}{c} x\in \texttt{X}\\ \xi\in\partial \hat{\phi}_n (x)\end{subarray}}\{ |\xi - \nabla \phi(x)|\}\rightarrow 0 \ \textrm{ for any compact set } \texttt{X}\subset\mathfrak{X}^\circ} = 1.\]
\end{thm}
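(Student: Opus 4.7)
The plan is to upgrade the pointwise statement from Theorem \ref{teo1}(iv) to a uniform statement by combining (a) the uniform convergence $\hat{\phi}_n \to \phi$ on compact subsets of $\mathfrak{X}^\circ$ given by Theorem \ref{teo1}(i), (b) a classical fact from convex analysis that a family of convex functions which converges uniformly on an open set has locally uniformly bounded subgradients, and (c) the outer semicontinuity of the subdifferential map under uniform convergence of convex functions. The natural strategy is a proof by contradiction along a subsequence, combined with a compactness argument in the data point and in the subgradient.

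The first step is to enlarge the compact set. Fix a compact $\texttt{X} \subset \mathfrak{X}^\circ$ and choose $\delta>0$ and a compact $\texttt{X}' \subset \mathfrak{X}^\circ$ such that $\texttt{X} + \delta\textbf{B} \subset (\texttt{X}')^\circ$. Work on the full probability event on which $\|\hat{\phi}_n-\phi\|_{\texttt{X}'} \to 0$, which exists by Theorem \ref{teo1}(i). On this event $\hat{\phi}_n$ is eventually uniformly bounded on $\texttt{X}'$, and I would invoke the standard Hörmander-type bound: a finite convex function bounded by $M$ on a convex neighborhood of radius $\delta$ of a point has every subgradient of norm at most $2M/\delta$ there. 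This yields a constant $L = L(\texttt{X},\texttt{X}',\phi)$ and an index $n_0(\omega)$ such that, for all $n\geq n_0$, every $x\in \texttt{X}$ satisfies $\partial\hat{\phi}_n(x)\subset L\textbf{B}$. In particular the $\hat{\phi}_n$ are uniformly $L$-Lipschitz on $\texttt{X}$ from index $n_0$ on. (One must also use, as in the proof of Theorem \ref{teo1}, that under (A3) or (A7) $\texttt{X}'$ is eventually contained in $\conv{X_1,\ldots,X_n}^\circ$, so that the subdifferentials of $\hat{\phi}_n$ are nonempty and the function is truly convex at every point of $\texttt{X}$.)

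With this in hand, suppose the conclusion fails on a set of positive inner probability. Then on some event of positive inner probability there exist $\varepsilon>0$, a subsequence $n_k$, points $x_{n_k} \in \texttt{X}$ and subgradients $\xi_{n_k} \in \partial \hat{\phi}_{n_k}(x_{n_k})$ with $|\xi_{n_k} - \nabla\phi(x_{n_k})|\geq \varepsilon$. By compactness of $\texttt{X}$ and the uniform bound $|\xi_{n_k}| \leq L$, extract a further subsequence (kept as $n_k$) with $x_{n_k}\to x_* \in \texttt{X}$ and $\xi_{n_k}\to \xi_* \in L\textbf{B}$. For every $h \in \mathbb{R}^d$ with $|h|\leq \delta/2$ the subgradient inequality gives
\[
\langle \xi_{n_k}, h\rangle \leq \hat{\phi}_{n_k}(x_{n_k}+h) - \hat{\phi}_{n_k}(x_{n_k}),
\]
and passing to the limit using uniform convergence on $\texttt{X}'$ and continuity of $\phi$ yields $\langle \xi_*, h\rangle \leq \phi(x_*+h)-\phi(x_*)$ for all small $h$, hence for all $h$ by positive homogeneity; therefore $\xi_* \in \partial \phi(x_*)$. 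Differentiability of $\phi$ on $\mathfrak{X}^\circ$ forces $\partial\phi(x_*)=\{\nabla\phi(x_*)\}$, so $\xi_* = \nabla \phi(x_*)$. Since differentiability on the open convex set $\mathfrak{X}^\circ$ implies continuity of $\nabla\phi$ there (see \cite{rca}, Corollary 25.5.1), $\nabla\phi(x_{n_k}) \to \nabla \phi(x_*) = \xi_*$, so $|\xi_{n_k}-\nabla\phi(x_{n_k})| \to 0$, contradicting the assumption $|\xi_{n_k}-\nabla\phi(x_{n_k})|\geq \varepsilon$.

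I expect two places to require care. The first is the measurability issue flagged by the paper's use of $\mathbf{P}_*$: the event $\{\sup_{x\in\texttt{X},\,\xi\in\partial\hat{\phi}_n(x)}|\xi-\nabla\phi(x)|\to 0\}$ need not be measurable, so the argument must be carried out on the measurable full-probability event supplied by Theorem \ref{teo1}(i) and then the conclusion phrased in terms of inner probability. The second, and genuinely non-trivial, point is the uniform Lipschitz bound: one must ensure that the bound $L$ depends only on $\texttt{X}$, $\texttt{X}'$, and $\phi$ and not on $n$, which is why enlarging to $\texttt{X}'$ with a definite $\delta$-cushion is essential. Once that bound is secured, the rest is a fairly standard compactness-and-subgradient-closure argument.
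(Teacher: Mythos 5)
Your proposal is correct, and its overall architecture is the same as the paper's: reduce the theorem to a deterministic convex-analysis statement via Theorem \ref{teo1}, and prove that statement by a compactness-plus-contradiction argument along a subsequence. The paper packages the deterministic ingredient as Lemma \ref{l22}, whose hypotheses are weaker than what you use: it requires only \emph{pointwise} convergence $f_n\to f$ on an open convex set, and its proof avoids any uniform bound on the subgradients --- instead of extracting a convergent subsequence $\xi_{n_k}\to\xi_*$, it pairs each offending subgradient with a unit direction $\eta_{n_k}$, passes to limits in the difference quotients, and contradicts Theorem 24.5 of \cite{rca}. Your route instead leans on the uniform convergence from Theorem \ref{teo1}(i) to get a H\"ormander-type bound $|\xi|\le 2M/\delta$, then uses outer semicontinuity of the subdifferential (the local-to-global subgradient step you attribute to ``positive homogeneity'' is really monotonicity of convex difference quotients, but it is fine) together with continuity of $\nabla\phi$ on $\mathfrak{X}^\circ$. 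What the paper's version buys is that no uniform Lipschitz control is needed and pointwise convergence suffices, which makes Lemma \ref{l22} a cleaner standalone extension of Rockafellar's Theorem 25.7; what your version buys is self-containedness, since you re-derive the needed facts from the subgradient inequality rather than quoting Theorem 24.5. Two small points of hygiene: the negation of the conclusion is an event of positive \emph{outer} probability, and the cleanest way to handle measurability (as the paper implicitly does) is to fix $\omega$ in the probability-one measurable event from Theorem \ref{teo1}(i) and run your entire contradiction argument deterministically for that $\omega$, which your proof in effect already does.
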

\subsection{Proof of the main results}
Before embarking on the proofs, one must notice that there are some statements which hold true under any of \{A1-A3\}, \{A2-A4\} or \{A5-A7\}. We list the most important ones below, since they'll be used later.
\begin{itemize}
\item For any set $\texttt{X}\subset \mathfrak{X}$ we have
\begin{equation}
 \frac{N_n(\texttt{X})}{n}\cas \nu(\texttt{X}). \label{ec41}
\end{equation}
\item The strong law of large numbers implies that for any Borel set $\texttt{X}\subset\mathfrak{X}$ with positive Lebesgue measure we have
    \begin{eqnarray}
    \frac{1}{N_n(\texttt{X})}\sum_{\begin{subarray}{c} X_k\in\texttt{X}\\1\leq k \leq n\end{subarray}} (Y_k-\phi(X_k)) \cas 0\label{ec42}
    \end{eqnarray}
    and also
    \begin{eqnarray}
    \lsup_{n\rightarrow\infty} \frac{1}{n}\sum_{1\leq k \leq n} (Y_k-\phi(X_k))^2 =  \sigma^2 \textrm{ a.s.}\label{ec43}
    \end{eqnarray}
    We would like to point out that in the case of condition A4, A4-(iii) allows us to obtain (\ref{ec42}) from an application of a version of the strong law of large number for uncorrelated random variables, as it appears in \cite{chung}, page 108, Theorem 5.1.2. Similarly, condition A4-(ii) implies that we can apply a version the strong law of large numbers for independent random variables as in \cite{pwm}, Lemma 12.8, page 118 or in \cite{fol}, Theorem 10.12, page 322 to obtain (\ref{ec43}). \newline

\item For any Borel subset $\texttt{X}\subset\mathfrak{X}$ with positive Lebesgue measure,
    \begin{equation}
    \#\{n\in\mathbb{N}: X_n\in\texttt{X}\}\cas +\infty\label{ec44}
    \end{equation}
\end{itemize}

{\it Proof of Theorem \ref{teo1}.} We will only make distinctions among the design schemes in the proof if we are using any property besides (\ref{ec41}), (\ref{ec42}), (\ref{ec43}) or (\ref{ec44}). For the sake of clarity, we divide the proof in steps. \\

\noindent {\bf Step I:} We start by showing that for any set with positive Lebesgue measure there is a uniform band around the regression function (over that set) such that $\hat{\phi}_n$ comes within the band at least at one point for all but finitely many $n$'s. This fact is stated in the following lemma (proved in Section \ref{pl15}).
\begin{lemma}\label{l15}
For any set $\texttt{X}\subset\mathfrak{X}$ with positive Lebesgue measure we have,
\[\p{\inf_{x\in\texttt{X}}\left\{ |\hat{\phi}_n (x) - \phi(x)|\right\} \geq M\ \emph{i.o.}} = 0\ \ \forall\ M>\frac{\sigma}{\sqrt{\nu(\texttt{X})}}.\]
\end{lemma}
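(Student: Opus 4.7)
The plan is to apply Lemma \ref{l6}(i) with the sharp choice $\psi = \phi$ (which is legal: $\phi$ is convex and, by A2 or A5--A7, finite on $\mathfrak{X}\supset\conv{X_1,\ldots,X_n}$). This produces a Pythagorean-type comparison between $\hat{\phi}_n$ and $\phi$ at the data points, which combined with a counting argument on $\texttt{X}$ will yield the lemma.

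Writing $r_k := \hat{\phi}_n(X_k) - \phi(X_k)$ and $\epsilon_k := Y_k - \phi(X_k)$, substituting $\psi = \phi$ into Lemma \ref{l6}(i) gives
\[ \sum_{k=1}^{n} (-r_k)(\epsilon_k - r_k) \le 0, \quad \text{i.e.,} \quad \sum_{k=1}^{n} r_k^2 \le \sum_{k=1}^{n} r_k \epsilon_k. \]
Cauchy--Schwarz applied to the right-hand side, followed by dividing through by $\bigl(\sum_k r_k^2\bigr)^{1/2}$, yields the ``projection contraction''
\[ \sum_{k=1}^n r_k^2 \le \sum_{k=1}^n \epsilon_k^2; \]
this is the $\mathbb{R}^n$-level assertion that projecting $Y$ onto the closed convex cone $\mathcal{K}_\mathcal{X}$, which already contains $(\phi(X_1),\ldots,\phi(X_n))$, can only move $Y$ closer to that vector.

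On any sample point at which the hypothesized event $\inf_{x\in\texttt{X}}|\hat{\phi}_n(x)-\phi(x)|\ge M$ occurs, every $k$ with $X_k\in\texttt{X}$ satisfies $|r_k|\ge M$, so $\sum_k r_k^2 \ge M^2 N_n(\texttt{X})$. Chaining this with the previous bound and dividing by $n$,
\[ M^2 \cdot \frac{N_n(\texttt{X})}{n} \le \frac{1}{n}\sum_{k=1}^n \epsilon_k^2. \]
Fix an $\omega$ in the intersection of the two almost-sure events on which (\ref{ec41}) gives $N_n(\texttt{X})/n \to \nu(\texttt{X})>0$ and (\ref{ec43}) gives $\lsup_n \frac{1}{n}\sum_{k=1}^n \epsilon_k^2 = \sigma^2$. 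If the hypothesized event were to occur along some subsequence $n_j\to\infty$ at this $\omega$, taking $\lsup$ along $n_j$ in the previous display would force $M^2 \nu(\texttt{X}) \le \sigma^2$, i.e.\ $M\le\sigma/\sqrt{\nu(\texttt{X})}$, contradicting $M>\sigma/\sqrt{\nu(\texttt{X})}$. So on this full-measure event the hypothesis occurs only finitely often, which is exactly the claim.

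The only nontrivial point is recognizing that the Moreau-type inequality of Lemma \ref{l6}(i) applied with $\psi=\phi$ delivers the Pythagorean bound with the sharp constant $1$; a naive use of the least-squares inequality $\sum(Y_k-\hat{\phi}_n(X_k))^2 \le \sum(Y_k-\phi(X_k))^2$ only produces $\sum r_k^2 \le 2\sum r_k\epsilon_k$ and, after Cauchy--Schwarz, a constant of $4$ in place of $1$, which loses exactly the factor needed to match the threshold $\sigma/\sqrt{\nu(\texttt{X})}$ appearing in the statement.
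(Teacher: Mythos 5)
Your proof is correct, and it takes a genuinely different route from the paper's. The paper reduces the statement to the two one-sided events $\left[\inf_{x\in\texttt{X}}\{\hat{\phi}_n(x)-\phi(x)\}\ge M\ \textrm{i.o.}\right]$ and $\left[\sup_{x\in\texttt{X}}\{\hat{\phi}_n(x)-\phi(x)\}\le -M\ \textrm{i.o.}\right]$, uses the first-moment law of large numbers (\ref{ec42}) to show that along a bad subsequence the average residual $Y_j-\hat{\phi}_{n_k}(X_j)$ over $\texttt{X}$ stays below $-(M-\delta)$, converts this into the lower bound $\nu(\texttt{X})(M-\delta)^2$ for $\frac{1}{n_k}\sum_j(Y_j-\hat{\phi}_{n_k}(X_j))^2$ via the quadratic-program identities (\ref{eq:Inf1})--(\ref{eq:Inf2}) and (\ref{ec44}), and then contradicts the plain least-squares optimality $\sum_j(Y_j-\hat{\phi}_n(X_j))^2\le\sum_j(Y_j-\phi(X_j))^2$ combined with (\ref{ec43}). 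You instead invoke the variational (Moreau) inequality of Lemma \ref{l6}(i) with $\psi=\phi$ --- legitimate, since $\phi$ is finite on $\mathfrak{X}\supset\conv{X_1,\ldots,X_n}$ --- to obtain the projection contraction $\sum_k(\hat{\phi}_n(X_k)-\phi(X_k))^2\le\sum_k(Y_k-\phi(X_k))^2$ with constant $1$, after which only (\ref{ec41}) and (\ref{ec43}) are needed. What your route buys: working with squares from the outset lets you handle the absolute-value event directly, sidestepping the sign-constancy issue implicit in reducing $\left[\inf_{x\in\texttt{X}}|\hat{\phi}_n(x)-\phi(x)|\ge M\right]$ to the two one-sided events (on an arbitrary positive-measure $\texttt{X}$ the difference could change sign), and it dispenses with (\ref{ec42}), (\ref{ec44}) and the QP identities altogether. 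One caveat on your closing remark: the ``naive'' least-squares inequality does not by itself forfeit the sharp threshold --- the paper uses exactly that inequality and still reaches $\sigma/\sqrt{\nu(\texttt{X})}$ by injecting the first-moment information from (\ref{ec42}); the factor $4$ only arises if one insists on passing through $\sum_k r_k^2\le C\sum_k\epsilon_k^2$ by Cauchy--Schwarz alone.
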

\noindent {\bf Step II:} The idea is now to use the convexity of both, $\phi$ and $\hat{\phi}_n$, to show that the previous result in fact implies that the sup-norm of $\hat{\phi}_n$ is uniformly bounded on compact subsets of $\mathfrak{X}^\circ$. We achieve this goal in the following two lemmas (whose proofs are given in Sections \ref{pl16} and \ref{pl17} respectively).
\begin{lemma}\label{l16}
Let $\texttt{X}\subset\mathfrak{X}^\circ$ be compact with positive Lebesgue measure. Then, there is a positive real number $K_\texttt{X}$ such that
\[ \p{ \inf_{x\in\texttt{X}} \{\hat{\phi}_n (x)\} < - K_\texttt{X} \emph{ i.o.}} = 0. \]
\end{lemma}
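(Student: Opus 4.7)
The plan is to combine Lemma \ref{l15}, applied to $\texttt{X}$ itself and to finitely many small balls surrounding $\texttt{X}$ inside $\mathfrak{X}^\circ$, with a subgradient argument. Lemma \ref{l15} will produce, for each large $n$, a single point in each such region at which $\hat{\phi}_n$ has bounded absolute value; a subgradient at the good point inside $\texttt{X}$ can then be controlled in every direction by the bounded values at the outer good points, and convexity will propagate this control into a uniform lower bound on all of $\texttt{X}$.

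\textbf{Geometric setup.} First I would fix finitely many open balls $B_1,\ldots,B_L \subset \mathfrak{X}^\circ \setminus \texttt{X}$ centered at points $c_1,\ldots,c_L$ chosen so that $\texttt{X} \subset \conv{c_1,\ldots,c_L}^\circ$. Shrinking the radii if necessary, a routine compactness argument ensures that $\texttt{X} \subset \conv{y^{(1)},\ldots,y^{(L)}}^\circ$ for \emph{every} choice $y^{(i)} \in \overline{B_i}$, with a uniform lower bound on the distance from $\texttt{X}$ to the boundary of this convex hull. The key consequence is that for any $x \in \texttt{X}$, any $y^{(i)} \in B_i$, and any unit vector $v \in \mathbb{R}^d$, one can write $v = \sum_{i=1}^L \alpha_i (y^{(i)} - x)$ with $\alpha_i \geq 0$ and $\sum_i \alpha_i \leq C_g$, where $C_g$ depends only on the configuration, not on $n$ or on the particular $y^{(i)}$.

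\textbf{Bounds from Lemma \ref{l15} and the subgradient step.} Since $\phi$ is a proper convex function finite on $\mathfrak{X}$, it is continuous and hence bounded on the compact set $\texttt{X} \cup \bigcup_i \overline{B_i} \subset \mathfrak{X}^\circ$. Applying Lemma \ref{l15} to $\texttt{X}$ and to each $B_i$ (all of positive Lebesgue measure) and intersecting the resulting full-probability events, almost surely for all $n$ large enough there exist $x_n \in \texttt{X}$ and $y_n^{(i)} \in B_i$ with $|\hat{\phi}_n(x_n)|$ and each $|\hat{\phi}_n(y_n^{(i)})|$ bounded by a common constant $C_0$. By (\ref{ec44}) applied to a finite cover, for such $n$ the set $\conv{X_1,\ldots,X_n}$ eventually contains a neighborhood of $\texttt{X} \cup \bigcup_i \overline{B_i}$, so $\hat{\phi}_n$ is finite there and admits a subgradient $\xi_n \in \partial \hat{\phi}_n(x_n)$. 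The subgradient inequality at each $y_n^{(i)}$ yields $\langle \xi_n, y_n^{(i)} - x_n \rangle \leq \hat{\phi}_n(y_n^{(i)}) - \hat{\phi}_n(x_n) \leq 2 C_0$, and combining with the positive-spanning bound above gives $\langle \xi_n, v \rangle \leq 2 C_0 C_g$ for every unit $v$, hence $|\xi_n| \leq 2 C_0 C_g$.

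\textbf{Conclusion and main difficulty.} Applying the subgradient inequality once more, this time at an arbitrary $x \in \texttt{X}$, yields
\[
\hat{\phi}_n(x) \;\geq\; \hat{\phi}_n(x_n) + \langle \xi_n, x - x_n \rangle \;\geq\; -C_0 - 2 C_0 C_g\, \textrm{diam}(\texttt{X}),
\]
so we may take $K_\texttt{X} := C_0 + 2 C_0 C_g\, \textrm{diam}(\texttt{X})$. The delicate part of the argument is the geometric setup: making sure that the positive-spanning constant $C_g$ can be chosen independently of $n$ and of the (random) locations of the good points inside the $B_i$. This is what forces the initial shrinking of the balls; once that is in place, the rest of the proof is a direct chaining of Lemma \ref{l15}, (\ref{ec44}), and the subgradient inequality.
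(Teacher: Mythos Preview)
Your argument is correct and is genuinely different from the paper's proof. The paper does not take a subgradient route at all: it partitions $\texttt{X}$ into tiny rectangles $\mathcal{C}$, and for each $\mathcal{C}$ builds an elaborate geometric configuration (via the matrix--algebra Lemmas \ref{l12}--\ref{l14}) of balls $B_1\supset\mathcal{C}$, $B_2$, and $A_1,\ldots,A_d$ arranged so that any point of $B_2$ lies on a segment from $\mathcal{C}$ to the ``roof'' $\conv{Z_{\alpha_0^1 1},\ldots,Z_{\alpha_0^d d}}$. Lemma \ref{l15} gives bounded values of $\hat{\phi}_n$ on the $A_j$'s, and then convexity along those segments forces a very negative value in $\mathcal{C}$ to propagate into a uniform deviation on the whole ball $B_2$, contradicting Lemma \ref{l15} applied to $B_2$.

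By contrast, you use Lemma \ref{l15} once on $\texttt{X}$ to get a single good point $x_n$, bound a subgradient at $x_n$ via the surrounding good points, and then push the lower bound outward with the subgradient inequality. This is considerably lighter: it avoids the grid decomposition and the technical Lemmas \ref{l12}--\ref{l13} entirely, and in effect anticipates the subgradient bound of Lemma \ref{l18} at a single point. Two small remarks: the restriction $B_i\subset\mathfrak{X}^\circ\setminus\texttt{X}$ is not needed for your argument (and is harmless, since one can take the $c_i$ as vertices of a polytope $P$ with $\texttt{X}\subset P^\circ\subset P\subset\mathfrak{X}^\circ$); and the existence of $\xi_n\in\partial\hat{\phi}_n(x_n)$ indeed requires $x_n$ in the interior of $\conv{X_1,\ldots,X_n}$, which you correctly secure via (\ref{ec44}).
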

\begin{lemma}\label{l17}
Let $\texttt{X}\subset\mathfrak{X}^\circ$ be a compact set with positive Lebesgue measure. Then, there is $K_\texttt{X} > 0$
such that
\[ \p{ \sup_{x\in\texttt{X}} \{\hat{\phi}_n (x)\} \geq K_\texttt{X}\ \emph{ i.o.}} = 0. \]
\end{lemma}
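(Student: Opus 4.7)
The plan is to sandwich $\texttt{X}$ inside the convex hull of a finite family of auxiliary compact sets, obtain pointwise control of $\hat{\phi}_n$ on each of them via Lemma \ref{l15}, and then transfer the bound to all of $\texttt{X}$ via the convexity of $\hat{\phi}_n$. Concretely, because $\texttt{X}$ is compact and contained in $\mathfrak{X}^\circ$, I would first fix a nondegenerate $d$-simplex with vertices $v_1,\ldots,v_{d+1}\in\mathfrak{X}^\circ$ such that $\texttt{X}$ lies in the interior of $\conv{v_1,\ldots,v_{d+1}}$. Then I would pick $\rho>0$ small enough that each closed ball $B_i:=\overline{B}(v_i,\rho)$ is contained in $\mathfrak{X}^\circ$ and, for every choice of points $y_i\in B_i$, the simplex $\conv{y_1,\ldots,y_{d+1}}$ still contains $\texttt{X}$. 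Such a $\rho$ exists because the convex hull depends continuously on its vertices (in the Hausdorff metric) and $\texttt{X}$ lies in the interior of the unperturbed simplex.

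Each ball $B_i$ has positive Lebesgue measure, so Lemma \ref{l15} supplies a constant $M_i>0$ such that, with probability one, for all but finitely many $n$ there exists some $y_i^{(n)}\in B_i$ with $|\hat{\phi}_n(y_i^{(n)})-\phi(y_i^{(n)})|<M_i$. Since $\phi:\mathbb{R}^d\to\mathbb{R}$ is a proper convex function finite on $\mathfrak{X}$, it is continuous, and hence bounded, on the compact set $B_i\subset\mathfrak{X}^\circ$, say by some constant $L_i$. Intersecting the $d+1$ almost-sure events, one obtains that with probability one, for all sufficiently large $n$, $\hat{\phi}_n(y_i^{(n)})\le L_i+M_i$ holds simultaneously for every $i=1,\ldots,d+1$.

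For such $n$ the convex hull of $y_1^{(n)},\ldots,y_{d+1}^{(n)}$ still contains $\texttt{X}$ by construction, so each $x\in\texttt{X}$ admits a representation $x=\sum_{i=1}^{d+1}\alpha_i y_i^{(n)}$ with $\alpha_i\ge 0$ and $\sum_i\alpha_i=1$. The convexity of $\hat{\phi}_n$ then yields
\[ \hat{\phi}_n(x)\le \sum_{i=1}^{d+1}\alpha_i\hat{\phi}_n(y_i^{(n)})\le \max_{1\le i\le d+1}(L_i+M_i)=:K_\texttt{X}, \]
which is the desired uniform upper bound on $\texttt{X}$. The main obstacle is the geometric step: constructing a finite collection of auxiliary sets $B_i$ whose convex hull is robust under the unknown, sample-dependent choice of representative points $y_i^{(n)}\in B_i$. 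Once this ``uniform cage'' around $\texttt{X}$ is in place, the probabilistic and convexity ingredients combine routinely.
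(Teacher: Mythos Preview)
Your strategy is essentially the paper's own: build a finite ``cage'' of small positive-measure sets around $\texttt{X}$, invoke Lemma~\ref{l15} on each one to pin down $\hat{\phi}_n$ at some point of each set, and then push the resulting bounds to all of $\texttt{X}$ via convexity of $\hat{\phi}_n$. The paper carries this out with the $2^d$ vertices of a slightly dilated rectangle (using the perturbation Lemma~\ref{l14}) rather than with the $d+1$ vertices of a simplex, but the probabilistic and convexity ingredients are identical.

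There is, however, one genuine gap in your version. You assert that because $\texttt{X}$ is compact in $\mathfrak{X}^\circ$ one can find a $d$-simplex with vertices in $\mathfrak{X}^\circ$ whose interior contains $\texttt{X}$. This can fail when $\mathfrak{X}$ is bounded. Take $d=2$, $\mathfrak{X}=[0,1]^2$ and $\texttt{X}=[0.1,0.9]^2$: any triangle with vertices in $(0,1)^2$ lies entirely inside the unit square and hence has area at most $\tfrac12$, so it cannot contain the square $\texttt{X}$ of area $0.64$. The repair is easy and is exactly what the paper does at the outset: first reduce to the case where $\texttt{X}$ is a single compact rectangle (any compact subset of $\mathfrak{X}^\circ$ is covered by finitely many such rectangles, and one takes the maximum of the resulting constants). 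Once $\texttt{X}$ is small relative to its distance to $\partial\mathfrak{X}$, your simplex construction---or the paper's $2^d$-point construction---goes through without difficulty, and the remainder of your argument is correct as written.
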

\noindent {\bf Step III:} Convex functions are determined by their subdifferential mappings (see \cite{rca}, Theorem 24.9, page 239). Moreover, having a uniform upper bound $K_\texttt{X}$ for the norms of all the subgradients over a compact region $\texttt{X}$ imposes a Lipschitz continuity condition on the convex function over $\texttt{X}$ (see \cite{rca}, Theorem 24.7, page 237); the Lipschitz constant being $K_\texttt{X}$. For these reasons, it is important to have a uniform upper bound on the norms of the subgradients of $\hat{\phi}_n$ on compact regions. The following lemma (proved in Section \ref{pl18}) states that this can be achieved.
\begin{lemma}\label{l18}
Let $\texttt{X}\subset\mathfrak{X}^\circ$ be a compact set with positive Lebesgue measure. Then, there is $K_\texttt{X}>0$ such that
\[ \op{\sup_{\begin{subarray}{c}\xi\in\partial \hat{\phi}_n (x) \\ x\in\texttt{X}\end{subarray}}\{|\xi|\} > K_\texttt{X} \emph{ i.o.}} = 0. \]
\end{lemma}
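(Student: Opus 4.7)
\textbf{Plan for the proof of Lemma \ref{l18}.} The strategy is to reduce the bound on subgradients of $\hat{\phi}_n$ over $\texttt{X}$ to a bound on the values of $\hat{\phi}_n$ over a slightly enlarged compact set, and then invoke Lemmas \ref{l16} and \ref{l17} on that enlargement. The key geometric fact from convex analysis (Rockafellar, Theorem 24.7) is that a convex function which is bounded above by $M$ on a neighborhood $x + r\textbf{B}$ of an interior point $x$ of its effective domain has every subgradient at $x$ of norm at most $M/r$. I will use this in its elementary form directly from the subgradient inequality.

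First, since $\texttt{X}\subset\mathfrak{X}^\circ$ is compact, I can choose $r>0$ small enough that the closed enlargement $\texttt{X}':=\texttt{X}+r\mathbf{B}$ is still a compact subset of $\mathfrak{X}^\circ$. Clearly $\texttt{X}'$ has positive Lebesgue measure (it contains $\texttt{X}$), so Lemmas \ref{l16} and \ref{l17} apply to $\texttt{X}'$: there exist constants $K_1,K_2>0$ such that, outside a $\mathbf{P}$-null set, the inequalities $\inf_{x\in\texttt{X}'}\hat{\phi}_n(x)\geq -K_1$ and $\sup_{x\in\texttt{X}'}\hat{\phi}_n(x)\leq K_2$ hold for all but finitely many $n$. (Measurability of these suprema and infima follows from the continuity of $\hat{\phi}_n$ on $\mathfrak{X}^\circ$, so they coincide with the corresponding sup/inf over a countable dense subset of $\texttt{X}'$.) Set $M:=K_1+K_2$.

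On the $\mathbf{P}$-full-measure event where these bounds hold eventually, fix any $n$ large enough that both bounds are satisfied, any $x\in\texttt{X}$, and any $\xi\in\partial\hat{\phi}_n(x)$. For every $h\in r\mathbf{B}$ we have $x+h\in\texttt{X}'$, and the subgradient inequality gives
\[
\langle\xi,h\rangle \leq \hat{\phi}_n(x+h)-\hat{\phi}_n(x) \leq K_2-(-K_1) = M.
\]
Taking $h=r\xi/|\xi|$ (when $\xi\neq 0$) yields $r|\xi|\leq M$, i.e.\ $|\xi|\leq M/r$. Consequently, with $K_\texttt{X}:=M/r+1$, the event
\[
\bigg\{\sup_{\substack{\xi\in\partial\hat{\phi}_n(x)\\ x\in\texttt{X}}}|\xi| > K_\texttt{X}\ \textrm{i.o.}\bigg\}
\]
is contained in the measurable $\mathbf{P}$-null event on which either the lower or the upper bound on $\hat{\phi}_n$ over $\texttt{X}'$ fails infinitely often. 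Hence its outer probability is zero, as required.

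The main conceptual step is the enlargement trick together with the elementary subgradient bound; there is no real obstacle, since the heavy lifting (the uniform boundedness of $\hat{\phi}_n$ on compact subsets of $\mathfrak{X}^\circ$) has already been carried out in Lemmas \ref{l16} and \ref{l17}. The only mild subtlety is measurability, which is handled by the observation that $\hat{\phi}_n$ is continuous on $\mathfrak{X}^\circ$, so the relevant sup/inf over $\texttt{X}'$ reduce to countable sups/infs of the measurable variables $\hat{\phi}_n(x)$ provided by Lemma \ref{l6}(v); this is why the conclusion is formulated with the outer probability $\mathbf{P}^*$.
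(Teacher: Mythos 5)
Your proof is correct, and it takes a genuinely different and simpler route than the paper's. The paper does not apply Lemma \ref{l17} to an enlargement of $\texttt{X}$; instead it redoes a quantitative version of that construction: using the geometric Lemmas \ref{l12}--\ref{l14} and the band Lemma \ref{l15} it builds an explicit polytope $K_\eta$ with $\texttt{X}\subset K_z\subset K_\eta^\circ\subset K_\eta\subset\mathfrak{X}^\circ$, bounds $\hat{\phi}_n$ above on $K_\eta$ through its values at points near the ``inflated'' vertices, bounds it below on $\texttt{X}$ by Lemma \ref{l16}, and then, for $\xi\in\partial\hat{\phi}_n(x)$, follows the ray $x+t\xi$ until it hits $\partial K_\eta$, using the positive distance between $\partial K_z$ and $\partial\texttt{X}$ to lower-bound the hitting time $t_*$ and hence bound $|\xi|$. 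Your argument --- enlarge to $\texttt{X}'=\texttt{X}+r\overline{\textbf{B}}\subset\mathfrak{X}^\circ$, invoke Lemmas \ref{l16} and \ref{l17} on $\texttt{X}'$, and take $h=r\xi/|\xi|$ in the subgradient inequality --- reaches the same conclusion with essentially no new geometry, because Lemma \ref{l17} already encapsulates the polytope construction; what the paper's longer route buys is only a more explicit constant $K_\texttt{X}$, which is never needed later (Lemmas \ref{l21} and \ref{l23} use the statement purely qualitatively), and there is no circularity since Lemma \ref{l17} does not depend on Lemma \ref{l18}. Two small points to tidy: take the closed ball (or the closure of the enlargement) so that $\texttt{X}'$ is indeed compact; and your measurability aside is slightly inaccurate, since $\hat{\phi}_n=+\infty$ off $\conv{X_1,\ldots,X_n}$ and so need not be continuous on all of $\mathfrak{X}^\circ$ for finite $n$ --- but this is harmless, because you use Lemmas \ref{l16} and \ref{l17} exactly as stated (the paper already treats those events as measurable), and the final step only needs monotonicity of outer probability applied to the containment of the bad event in their union.
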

\noindent {\bf Step IV:} For the next results we need to introduce some further notation. We will denote by $\mu_n$ the empirical measure defined on $\mathbb{R}^{d+1}$ by the sample $(X_1,Y_1),\ldots,(X_n,Y_n)$. In agreement with \cite{vw}, given a class of functions $\mathcal{G}$ on $D\subset\mathbb{R}^{d+1}$, a seminorm $\left\|\cdot\right\|$ on some space containing $\mathcal{G}$ and $\epsilon>0$ we denote by $N(\epsilon,\mathcal{G},\|\cdot\|)$ the $\epsilon$ covering number of $\mathcal{G}$ with respect to $\|\cdot\|$.

Although Lemmas \ref{l19} and \ref{l20} may seem unrelated to what has been done so far, they are crucial for the further developments. Lemma 3.5 (proved in Section \ref{pl19}) shows that the class of convex functions is not very complex in terms of entropy. Lemma \ref{l20} is a uniform version of the strong law of large numbers which proves vital in the proof of Lemma \ref{l21}.
\begin{lemma}\label{l19}
Let $\texttt{X}\subset\mathfrak{X}^\circ$ be a compact rectangle with positive Lebesgue measure. For $K > 0$ consider the class $\mathcal{G}_{K,\texttt{X}}$ of all functions of the form $\psi(X)(Y-\phi(X))\ind{\texttt{X}}(X)$ where $\psi$ ranges over the class $\mathcal{D}_{K,\texttt{X}}$ of all proper convex functions which satisfy
\begin{enumerate}[(a)]
\item $\|\psi\|_\texttt{X} \leq K$;
\item $\displaystyle \bigcup_{\begin{subarray}{c} \xi\in\partial \psi(x)\\ x\in\texttt{X}\end{subarray}} \left\{ \xi\right\} \subset [-K,K]^d$.
\end{enumerate}
Then, for any $\epsilon > 0$ we have \[ \lsup_{n\rightarrow\infty} N(\epsilon, \mathcal{G}_{K,\texttt{X}}, \mathbb{L}_1 (\texttt{X}\times\mathbb{R},\mu_n)) <\infty\ \ \textrm{almost surely},\]
and there is a positive constant $A_\epsilon < \infty$, depending only on $(X_1,\ldots,X_n)$, $K$ and $\texttt{X}$, such that the covering numbers $N (\frac{\epsilon}{n}\sum_{j=1}^n|Y_j - \phi(X_j)|, \mathcal{G}_{K,\texttt{X}}, \mathbb{L}_1 (\texttt{X}\times\mathbb{R},\mu_n) )$ are bounded above by $A_\epsilon$, for all $n\in\mathbb{N}$, almost surely.
\end{lemma}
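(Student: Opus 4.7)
The idea is to reduce the covering-number bound on $\mathcal{G}_{K,\texttt{X}}$ with respect to $\mathbb{L}_1(\mu_n)$ to a sup-norm covering-number bound on the deterministic class $\mathcal{D}_{K,\texttt{X}}$, and then invoke a standard entropy estimate for uniformly bounded, uniformly Lipschitz function classes. First, I would observe that every $\psi\in\mathcal{D}_{K,\texttt{X}}$ is both uniformly bounded (by $K$) and uniformly Lipschitz on $\texttt{X}$: since $\partial\psi(x)\subset[-K,K]^d$ for all $x\in\texttt{X}$, every subgradient has norm at most $K\sqrt{d}$, and by the standard fact that a convex function with bounded subdifferential on a convex set is Lipschitz with the same constant (see \cite{rca}, Theorem 24.7), $\psi$ is $K\sqrt{d}$-Lipschitz on $\texttt{X}$.

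Next, for any two $\psi_1,\psi_2\in\mathcal{D}_{K,\texttt{X}}$, the triangle inequality yields
\[
\left\|\psi_1(X)(Y-\phi(X))\ind{\texttt{X}}(X)-\psi_2(X)(Y-\phi(X))\ind{\texttt{X}}(X)\right\|_{\mathbb{L}_1(\mu_n)}\ \leq\ \|\psi_1-\psi_2\|_\texttt{X}\cdot\frac{1}{n}\sum_{j=1}^n|Y_j-\phi(X_j)|.
\]
Hence, if $\{\psi_1,\ldots,\psi_M\}$ is a sup-norm $\delta$-net for $\mathcal{D}_{K,\texttt{X}}$ on $\texttt{X}$, then $\{\psi_i(X)(Y-\phi(X))\ind{\texttt{X}}(X)\}_{i=1}^M$ is a $\delta\cdot\frac{1}{n}\sum_{j=1}^n|Y_j-\phi(X_j)|$-net for $\mathcal{G}_{K,\texttt{X}}$ in the $\mathbb{L}_1(\mu_n)$ seminorm. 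Taking $\delta=\epsilon$ gives
\[
N\!\left(\frac{\epsilon}{n}\sum_{j=1}^n|Y_j-\phi(X_j)|,\,\mathcal{G}_{K,\texttt{X}},\,\mathbb{L}_1(\texttt{X}\times\mathbb{R},\mu_n)\right)\leq N(\epsilon,\mathcal{D}_{K,\texttt{X}},\|\cdot\|_\texttt{X}).
\]
The right-hand side is deterministic (it does not depend on the data), and it is finite by the Arzelà--Ascoli-type entropy estimate for $K\sqrt{d}$-Lipschitz functions uniformly bounded by $K$ on the compact rectangle $\texttt{X}$: placing a regular grid on $\texttt{X}$ of spacing $\asymp\epsilon/K\sqrt{d}$ and discretizing the function values at the grid points to precision $\epsilon$ produces a finite sup-norm $\epsilon$-net whose cardinality depends only on $\epsilon$, $K$, $d$, and the dimensions of $\texttt{X}$. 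Setting $A_\epsilon$ equal to this cardinality gives the second assertion.

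For the first assertion, note that under any of the three hypothesis sets $\{$A1--A3$\}$, $\{$A2--A4$\}$, $\{$A5--A7$\}$, the strong law of large numbers (applied as in (\ref{ec43}) for stationary second-moment cases, or via the uncorrelated-variables SLLN under A4) together with the Cauchy--Schwarz inequality gives
\[
\limsup_{n\to\infty}\frac{1}{n}\sum_{j=1}^n|Y_j-\phi(X_j)|\leq\limsup_{n\to\infty}\sqrt{\frac{1}{n}\sum_{j=1}^n(Y_j-\phi(X_j))^2}=\sigma<\infty\qquad\text{a.s.}
\]
Consequently, for each $\epsilon>0$, almost surely there is $n_0$ such that $\frac{1}{n}\sum_{j=1}^n|Y_j-\phi(X_j)|\leq\sigma+1$ for all $n\geq n_0$; applying the previous display with $\delta=\epsilon/(\sigma+1)$ yields
\[
\limsup_{n\to\infty}N(\epsilon,\mathcal{G}_{K,\texttt{X}},\mathbb{L}_1(\mu_n))\leq N\!\left(\frac{\epsilon}{\sigma+1},\mathcal{D}_{K,\texttt{X}},\|\cdot\|_\texttt{X}\right)<\infty
\]
almost surely, which is the first claim.

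The only genuinely delicate point, and thus the main obstacle, is verifying the sup-norm entropy bound $N(\epsilon,\mathcal{D}_{K,\texttt{X}},\|\cdot\|_\texttt{X})<\infty$ with a constant that depends only on $\epsilon$, $K$, and $\texttt{X}$ (not on $n$). I would handle this explicitly: pick an $\eta$-grid $G\subset\texttt{X}$ with $\eta=\epsilon/(2K\sqrt{d})$, so $|G|$ depends only on $\epsilon$, $K$, $d$, and $\mathrm{diam}(\texttt{X})$; next discretize the possible values $\psi(x)$ for $x\in G$ to the grid $\epsilon\mathbb{Z}\cap[-K-\epsilon,K+\epsilon]$, which has cardinality at most $\lceil(2K+2\epsilon)/\epsilon\rceil+1$; there are at most that many raised to the $|G|$ combinations, and any $\psi\in\mathcal{D}_{K,\texttt{X}}$ is within $\epsilon$ in sup-norm of at least one function matching the same discretization on $G$, by the Lipschitz property. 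This finite combinatorial bound gives $A_\epsilon$ and finishes both claims.
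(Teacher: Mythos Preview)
Your argument is correct, and it is a genuinely different (and more streamlined) route than the paper's. You reduce everything to a sup-norm covering of the deterministic class $\mathcal{D}_{K,\texttt{X}}$ via the single inequality
\[
\|\psi_1-\psi_2\|_{\mathbb{L}_1(\mu_n)}\le\|\psi_1-\psi_2\|_\texttt{X}\cdot\tfrac{1}{n}\sum_j|Y_j-\phi(X_j)|,
\]
and then bound $N(\epsilon,\mathcal{D}_{K,\texttt{X}},\|\cdot\|_\texttt{X})$ by a standard Arzel\`a--Ascoli discretization of domain and range, exploiting only the uniform bound and the uniform $K\sqrt{d}$-Lipschitz constant. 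The paper instead builds the net explicitly out of piecewise-affine \emph{convex} functions: it discretizes $\texttt{X}$, the subgradient box $[-K,K]^d$, and the value range $[-K,K]$ simultaneously, and for each $\psi$ assembles an approximant $g(x)=\max_j\{\langle\xi_{s_j},x-x_{t_j}\rangle+\eta_{\tau_j}\}$ that matches $\psi$ exactly at the data points $X_j\in\texttt{X}$ (equations (\ref{ec30})--(\ref{ec32})).

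What each approach buys: yours is shorter and uses nothing about convexity beyond the subgradient bound, and your $A_\epsilon$ is manifestly independent of the data, which is all the lemma asks. The paper's more elaborate construction yields approximants that are themselves convex and agree with $\psi$ at every observed $X_j$; these two extra properties are exploited in the proof of Lemma~\ref{l20}, where exact agreement at the data is used to rewrite $\sup_{\psi\in\mathcal{D}_{K,\texttt{X}}}$ as a countable supremum over the classes $\mathcal{H}_{n,1/m}$ (establishing measurability), and where the nets are later taken inside $\mathcal{D}_{K,\texttt{X}}$. If you adopt your proof, you would need a separate (easy) remark in Lemma~\ref{l20} to handle measurability of the supremum---e.g., by noting that $\mathcal{D}_{K,\texttt{X}}$ is separable in $\|\cdot\|_\texttt{X}$.
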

The proofs of Lemmas \ref{l20} and \ref{l21} (given in Sections \ref{pl20} and \ref{pl21} respectively) are the only parts in the whole proof where we must treat the different design schemes separately. To make the argument work, a small lemma (proved in Section \ref{plfdp}) for the set of conditions \{A2-A4\} is required. We include it here for the sake of completeness and to point out the difference between the schemes.
\begin{lemma}\label{lfdp}
Consider the set of conditions \{A2-A4\} and a subsequence $(n_k)_{k=1}^\infty$ such that
\[ \lim_{k\rightarrow\infty} \frac{1}{n_k}\sum_{j=1}^{n_k} \e{\epsilon_j^2} = \sigma^2.\]
Let $(\texttt{X}_m)_{m=1}^\infty$ be a an increasing sequence of compact subsets of $\mathfrak{X}$ satisfying $\nu(X_m)\rightarrow 1$. Then,
\[ \lim_{m\rightarrow\infty}\linf_{k\rightarrow\infty} \frac{1}{n_k}\sum_{\{1\leq j\leq n_k: X_j\in\texttt{X}_m\}}\e{\epsilon_j^2} = \sigma^2.\]
\end{lemma}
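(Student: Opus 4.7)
The plan is to split the sum $\frac{1}{n_k}\sum_{j=1}^{n_k}\mathbf{E}(\epsilon_j^2)$ into the pieces coming from indices with $X_j\in\texttt{X}_m$ and $X_j\notin\texttt{X}_m$, and then show the ``outside'' piece is asymptotically negligible as $m\to\infty$, using that the error variances are uniformly bounded by hypothesis (A4)(iii). Concretely, write
\[
\frac{1}{n_k}\sum_{j=1}^{n_k}\mathbf{E}(\epsilon_j^2)
= \frac{1}{n_k}\sum_{\{1\le j\le n_k\,:\,X_j\in\texttt{X}_m\}}\mathbf{E}(\epsilon_j^2)
+ \frac{1}{n_k}\sum_{\{1\le j\le n_k\,:\,X_j\notin\texttt{X}_m\}}\mathbf{E}(\epsilon_j^2).
\]
Setting $M:=\sup_{n\in\mathbb{N}}\mathbf{E}(\epsilon_n^2)$, which is finite by (A4)(iii), the outside sum is bounded above by $M\cdot(n_k-N_{n_k}(\texttt{X}_m))/n_k$.

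Next I would take $k\to\infty$ in each piece. Invoking (\ref{ec41}) applied to $\texttt{X}_m$ (compact, hence to the complement in $\mathfrak{X}$), the bound on the outside sum satisfies
\[
\limsup_{k\to\infty}\frac{1}{n_k}\sum_{\{1\le j\le n_k\,:\,X_j\notin\texttt{X}_m\}}\mathbf{E}(\epsilon_j^2)
\le M\bigl(1-\nu(\texttt{X}_m)\bigr).
\]
Combining this with the hypothesis $\lim_{k}\frac{1}{n_k}\sum_{j=1}^{n_k}\mathbf{E}(\epsilon_j^2)=\sigma^2$ yields
\[
\liminf_{k\to\infty}\frac{1}{n_k}\sum_{\{1\le j\le n_k\,:\,X_j\in\texttt{X}_m\}}\mathbf{E}(\epsilon_j^2)
\ge \sigma^2 - M\bigl(1-\nu(\texttt{X}_m)\bigr).
\]
Since $\nu(\texttt{X}_m)\to 1$ by assumption, sending $m\to\infty$ gives the lower bound $\sigma^2$ for the iterated limit on the left hand side of the lemma's conclusion.

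For the matching upper bound, the ``inside'' sum is always dominated term by term by the full sum because all summands are nonnegative, so
\[
\liminf_{k\to\infty}\frac{1}{n_k}\sum_{\{1\le j\le n_k\,:\,X_j\in\texttt{X}_m\}}\mathbf{E}(\epsilon_j^2)
\le \lim_{k\to\infty}\frac{1}{n_k}\sum_{j=1}^{n_k}\mathbf{E}(\epsilon_j^2)=\sigma^2
\]
for every $m$, and hence the same inequality survives passage to $m\to\infty$. Combining the two bounds gives the claimed equality. There is no real obstacle here; the argument is essentially a truncation/uniform integrability step. The only subtlety is applying (\ref{ec41}) to the compact set $\texttt{X}_m$ (which is not, a priori, an open rectangle as in (A3)(ii)), but the authors state (\ref{ec41}) for arbitrary Borel subsets of $\mathfrak{X}$, so I would simply invoke it as stated.
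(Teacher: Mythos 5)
Your proposal is correct and follows essentially the same route as the paper: decompose the full Ces\`aro average into the contributions from indices with $X_j\in\texttt{X}_m$ and $X_j\notin\texttt{X}_m$, bound the latter by $\sup_{j}\e{\epsilon_j^2}\,\bigl(1-\nu(\texttt{X}_m)\bigr)$ via A4-(iii) and (\ref{ec41}), take the limit inferior in $k$ and then $m\rightarrow\infty$, the reverse inequality being trivial since the inner sum is dominated by the full sum. The only point you flag (applying (\ref{ec41}) to sets that are not open rectangles) is handled identically in the paper, which invokes (\ref{ec41}) for general subsets of $\mathfrak{X}$.
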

We are now ready to state the key result on the uniform law of large numbers.
\begin{lemma}\label{l20}
Consider the notation of Lemma \ref{l19} and let $\texttt{X}\subset\mathfrak{X}^\circ$ be any finite union of compact rectangles with positive Lebesgue measure. Then,
\[ \sup_{\psi\in\mathcal{D}_{K,\texttt{X}}} \left\{ \left|\frac{1}{n}\sum_{\{1\leq j\leq n: X_j\in\texttt{X}\}} \psi(X_j)(Y_j - \phi(X_j)) \right|\right\}\cas 0.\]
\end{lemma}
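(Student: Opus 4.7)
The plan is to reduce the uniform problem to finitely many pointwise strong laws via an entropy bound, exactly as in the standard Glivenko–Cantelli argument suggested by the setup of Lemma \ref{l19}. The starting observation is that a function $\psi\in\mathcal{D}_{K,\texttt{X}}$ is bounded by $K$ on $\texttt{X}$ and, because its subgradients lie in $[-K,K]^d$, is $K\sqrt{d}$-Lipschitz on $\texttt{X}$. Therefore the restrictions $\{\psi|_\texttt{X}:\psi\in\mathcal{D}_{K,\texttt{X}}\}$ form a uniformly bounded and equicontinuous family, so by Arzela--Ascoli this family is totally bounded in the sup-norm on $\texttt{X}$. Fix $\epsilon>0$ and choose a finite sup-norm $\epsilon$-net $\{\psi_1,\dots,\psi_M\}\subset\mathcal{D}_{K,\texttt{X}}$. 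This is precisely the deterministic cover that underlies the empirical $\mathbb{L}_1(\mu_n)$ cover of Lemma \ref{l19}, since for any $\psi,\psi'$ with $\|\psi-\psi'\|_\texttt{X}\le\epsilon$,
\[\|G_\psi-G_{\psi'}\|_{\mathbb{L}_1(\mu_n)}\le \epsilon\cdot\frac{1}{n}\sum_{\{1\le j\le n:X_j\in\texttt{X}\}}|Y_j-\phi(X_j)|.\]

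Next I would apply a pointwise SLLN to each of the finitely many $\psi_i$. Here the three design schemes are handled separately. Under \{A1--A3\} or \{A5--A7\} the summands $\psi_i(X_j)(Y_j-\phi(X_j))\ind{\texttt{X}}(X_j)$ are independent with mean zero (using $\e{Y-\phi(X)|X}=0$) and uniformly bounded first factor, so the ordinary SLLN yields $\frac{1}{n}\sum_{\{X_j\in\texttt{X}\}}\psi_i(X_j)(Y_j-\phi(X_j))\cas 0$. Under \{A2--A4\} the errors are independent but not identically distributed; however A4--(iii) gives $\sup_n\e{\epsilon_n^2}<\infty$, so with the weights $c_j^{(i)}=\psi_i(X_j)\ind{\texttt{X}}(X_j)\in[-K,K]$ we have $\sum_{j=1}^\infty\var{c_j^{(i)}\epsilon_j}/j^2<\infty$ and Kolmogorov's SLLN for independent zero-mean variables applies; Lemma \ref{lfdp} is invoked at this stage to handle any averaging of variances restricted to $\{X_j\in\texttt{X}\}$ that may be needed. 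Since the maximum of finitely many a.s.\ null sequences is a.s.\ null, I obtain $\max_{1\le i\le M}\left|\frac{1}{n}\sum_{\{X_j\in\texttt{X}\}}\psi_i(X_j)(Y_j-\phi(X_j))\right|\cas 0$.

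For the uniform step, given any $\psi\in\mathcal{D}_{K,\texttt{X}}$ I pick $\psi_i$ with $\|\psi-\psi_i\|_\texttt{X}\le\epsilon$ and estimate
\[\left|\frac{1}{n}\sum_{\{X_j\in\texttt{X}\}}(\psi(X_j)-\psi_i(X_j))(Y_j-\phi(X_j))\right|\le \epsilon\cdot\frac{1}{n}\sum_{\{X_j\in\texttt{X}\}}|Y_j-\phi(X_j)|.\]
Cauchy--Schwarz bounds the right-hand side by $\epsilon\sqrt{\frac{1}{n}\sum_{k=1}^n(Y_k-\phi(X_k))^2}$, and (\ref{ec43}) shows this is eventually at most $\epsilon\sqrt{\sigma^2+1}$ almost surely. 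Combining the two displayed bounds and taking the supremum over $\psi$,
\[\lsup_{n\to\infty}\sup_{\psi\in\mathcal{D}_{K,\texttt{X}}}\left|\frac{1}{n}\sum_{\{X_j\in\texttt{X}\}}\psi(X_j)(Y_j-\phi(X_j))\right|\le \epsilon\sqrt{\sigma^2+1}\quad\textrm{a.s.}\]
Sending $\epsilon\downarrow 0$ along a countable sequence (so the exceptional set stays null) delivers the claim.

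The main obstacle is the SLLN under \{A2--A4\}, where the errors are only independent with heterogeneous variances; the argument needs A4--(iii) (uniform bound on second moments) to activate Kolmogorov's SLLN on the weighted sums, and the auxiliary Lemma \ref{lfdp} to control the fraction of the total variance coming from samples inside a compact set $\texttt{X}$. Everything else is essentially bookkeeping: Arzela--Ascoli gives the deterministic cover, a finite union of null sets disposes of the pointwise SLLN over the net, and the Lipschitz/envelope bound controls the uniform slack.
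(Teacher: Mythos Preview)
Your argument is essentially correct, but it follows a genuinely different route from the paper's proof. The paper never fixes an $n$-independent net: it uses the $n$-dependent covers $\mathcal{A}_n$ produced by Lemma~\ref{l19}, and because those covers change with $n$ it cannot simply apply a pointwise SLLN to each center. Instead it runs a blocking argument along the subsequence $n^2$: Chebyshev plus Borel--Cantelli give $B_{n^2}\cas 0$, and then auxiliary quantities $C_n,D_n$ bridge from $n^2$ to general $n$. For the stochastic design the paper bypasses all of this and appeals directly to the Glivenko--Cantelli theorem in \cite{vw}. Your approach exploits the observation that $\mathcal{D}_{K,\texttt{X}}$ is already totally bounded in $\|\cdot\|_\texttt{X}$ by Arzel\`a--Ascoli, so a \emph{single} finite net works for every $n$; this reduces the uniform law to finitely many pointwise laws and avoids the subsequence machinery entirely. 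Your route is more elementary and arguably cleaner here; the paper's route is the one that would generalize to classes that are only $\mathbb{L}_1(\mu_n)$-totally bounded.

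Two small corrections. First, under \{A1--A3\} with fixed design the summands $\psi_i(X_j)\ind{\texttt{X}}(X_j)\epsilon_j$ are independent but not identically distributed (the deterministic weights vary with $j$), so ``ordinary SLLN'' should really be Kolmogorov's SLLN via $\sum_j\var{c_j^{(i)}\epsilon_j}/j^2\le K^2\sigma^2\sum_j j^{-2}<\infty$---the same device you correctly invoke under \{A2--A4\}. Second, Lemma~\ref{lfdp} plays no role here: Kolmogorov's criterion needs only the uniform second-moment bound A4--(iii), and Lemma~\ref{lfdp} is used later (in Lemma~\ref{l21}) to pass from $\texttt{X}_m$ to $\mathfrak{X}$, not in the present uniform law. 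Dropping that reference tightens the argument without loss.
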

\noindent {\bf Step V:} With the aid of all the results proved up to this point, it is now possible to show that Lemma \ref{l15} is in fact true if we replace $M$ by an arbitrarily small $\eta > 0$. The proof of the following lemma is given in Section \ref{pl21}.
\begin{lemma}\label{l21}
Let $\texttt{X}\subset\mathfrak{X}^\circ$ be any compact set with positive Lebesgue measure. Then,
\begin{enumerate}[(i)]
\item $\displaystyle \p{\inf_{x\in\texttt{X}}\{  \phi(x) - \hat{\phi}_n (x) \} \geq \eta\ \emph{ i.o.}} = 0\ \ \ \forall\ \eta > 0,$
\item $\displaystyle \p{\sup_{x\in\texttt{X}}\{  \phi(x) - \hat{\phi}_n (x) \} \leq -\eta\ \emph{ i.o.}} = 0\ \ \ \forall\ \eta > 0.$
\end{enumerate}
\end{lemma}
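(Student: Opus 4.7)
My approach is to argue by contradiction and reduce both parts of the lemma to the single claim
\[
A_n \;:=\; \frac{1}{n}\sum_{k=1}^n \bigl(\hat{\phi}_n(X_k)-\phi(X_k)\bigr)^2 \cas 0.
\]
If either event in (i) or (ii) holds along a subsequence $(n_k)$, then $|\hat{\phi}_{n_k}-\phi|\ge\eta$ everywhere on $\texttt{X}$, so
\[
A_{n_k}\;\ge\;\eta^{2}\,\frac{N_{n_k}(\texttt{X})}{n_k}
\;\cas\;\eta^{2}\,\nu(\texttt{X})\;>\;0
\]
by (\ref{ec41}), contradicting the claim. Hence it suffices to prove $A_n\cas 0$.

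To establish the claim, I start from the optimality relation Lemma \ref{l6}(i) applied with the convex function $\psi=\phi$. Substituting $Y_k=\phi(X_k)+\epsilon_k$ and rearranging gives
\[
A_n \;\le\; \frac{1}{n}\sum_{k=1}^n \bigl(\hat{\phi}_n(X_k)-\phi(X_k)\bigr)\,\epsilon_k.
\]
Fix $\delta>0$ and pick a finite union of compact rectangles $\texttt{X}^+\subset\mathfrak{X}^\circ$ containing $\texttt{X}$ with $\nu(\mathfrak{X}\setminus\texttt{X}^+)$ small enough (to be quantified below). Split the right-hand side according to whether $X_k\in\texttt{X}^+$ or not. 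For the inside piece, Lemmas \ref{l16}--\ref{l18} guarantee that a.s.\ the estimator $\hat{\phi}_n$ lies in the class $\mathcal{D}_{K,\texttt{X}^+}$ of Lemma \ref{l19} for some random integer $K$ and all sufficiently large $n$; since $\phi$ itself also lies in $\mathcal{D}_{K,\texttt{X}^+}$ for $K$ large enough (being real-valued and hence locally Lipschitz on $\mathbb{R}^d$), Lemma \ref{l20} applied simultaneously over all positive integer $K$ yields
\[
\frac{1}{n}\sum_{\{1\le k\le n:\,X_k\in\texttt{X}^+\}} \bigl(\hat{\phi}_n(X_k)-\phi(X_k)\bigr)\epsilon_k \cas 0.
\]
For the outside piece, Cauchy-Schwarz produces
\[
\Bigl|\tfrac{1}{n}\!\!\!\sum_{\{X_k\notin\texttt{X}^+\}}\!\!\!\bigl(\hat{\phi}_n-\phi\bigr)(X_k)\,\epsilon_k\Bigr| \;\le\; \sqrt{A_n}\cdot\sqrt{\tfrac{1}{n}\!\!\!\sum_{\{X_k\notin\texttt{X}^+\}}\!\!\!\epsilon_k^{\,2}},
\]
and the second factor has a.s.\ limsup tending to $0$ as $\nu(\mathfrak{X}\setminus\texttt{X}^+)\downarrow 0$: by (\ref{ec43}) combined with (\ref{ec41}) under \{A1-A3\}, by Lemma \ref{lfdp} together with A4-(iii) under \{A2-A4\}, and by uniform integrability of $\epsilon_1^{2}$ under \{A5-A7\}.

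Choosing $\texttt{X}^+$ so that this last limsup lies below $\delta$ and combining the two bounds produces the self-improving inequality
\[
A_n \;\le\; o(1) + \sqrt{A_n}\bigl(\sqrt{\delta}+o(1)\bigr)\quad\text{a.s.,}
\]
which, read as a quadratic inequality in $\sqrt{A_n}$, forces $\limsup_n A_n\le\delta$ a.s. Since $\delta>0$ was arbitrary, $A_n\cas 0$, completing the proof of both (i) and (ii). The main obstacle is the outside piece: we have no a priori uniform control on $\hat{\phi}_n$ off $\texttt{X}^+$, and $\hat{\phi}_n-\phi$ is not a convex function, so Lemma \ref{l20} cannot be applied directly to the difference; the Cauchy-Schwarz trick circumvents this by converting the loss of uniform control into a factor of $\sqrt{A_n}$ itself, which is absorbed on the left as long as $\nu(\mathfrak{X}\setminus\texttt{X}^+)$ is sufficiently small.
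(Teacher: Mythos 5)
Your proposal is correct, but it follows a genuinely different route from the paper's. The paper proves (i) and (ii) by two symmetric contradiction arguments: on the event $\left[\inf_{\texttt{X}}\{\phi-\hat{\phi}_n\}\geq\eta\ \textrm{i.o.}\right]$ it expands $\frac{1}{n_k}\sum_j(Y_j-\hat{\phi}_{n_k}(X_j))^2$ over a large compact $\texttt{X}_m$, controls the cross term with Lemma \ref{l20}, retains the term $\frac{N_{n_k}(\texttt{X})}{n_k}\eta^2$, lets $m\rightarrow\infty$ via the strong law and Lemma \ref{lfdp}, and contradicts the fact that the least squares criterion of $\hat{\phi}_{n_k}$ cannot exceed that of $\phi$, whose limit superior is $\sigma^2$. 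You instead extract the basic inequality $A_n\leq\frac{1}{n}\sum_k(\hat{\phi}_n(X_k)-\phi(X_k))\epsilon_k$ from the projection characterization, Lemma \ref{l6}(i) with $\psi=\phi$, and prove the stronger intermediate statement $A_n\cas 0$, from which (i) and (ii) follow simultaneously via (\ref{ec41}) and $\nu(\texttt{X})>0$ (guaranteed by A3-(i) or A7). Your handling of the tail --- Cauchy--Schwarz producing a factor $\sqrt{A_n}$ that is reabsorbed on the left, with the remaining factor made small by shrinking $\nu(\mathfrak{X}\setminus\texttt{X}^+)$ --- plays the role of the paper's passage $m\rightarrow\infty$, while the inside term uses Lemmas \ref{l16}--\ref{l18} and \ref{l20} exactly as the paper does (note you must apply Lemma \ref{l20} to $\hat{\phi}_n$ and to $\phi$ separately, since their difference is not convex; this costs a factor $2$, just as the paper incurs a factor $4$). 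What your route buys is a quantitative design-point mean-square consistency statement and a single argument covering both signs; what the paper's route buys is that it never needs the Moreau inequality of Lemma \ref{l6}(i), only the weaker fact that $\hat{\phi}_n$ minimizes the criterion, and it confines the heteroscedastic bookkeeping to Lemma \ref{lfdp}. Two points should be tightened but are not gaps: under \{A2-A4\} the almost sure control of $\frac{1}{n}\sum_{\{X_k\notin\texttt{X}^+\}}\epsilon_k^2$ should be justified by the strong law for independent variables (available from A4-(ii)) applied to the deterministic index set $\{k: X_k\notin\texttt{X}^+\}$ together with A4-(iii) and (\ref{ec41}), since Lemma \ref{lfdp} itself only concerns expectations along a subsequence; and the events involving subgradient bounds need not be measurable, so, as in the paper, your conclusion should be phrased with inner and outer probabilities.
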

\noindent {\bf Step VI:} Combining the last lemma with the fact that we have a uniform bound on the norms of the subgradients on compacts, we can state and prove the consistency result on compacts. This is done in the next lemma (proof included in Section \ref{pl23}).
\begin{lemma}\label{l23}
Let $\texttt{X}\subset\mathfrak{X}^\circ$ be a compact set with positive Lebesgue measure. Then,
\begin{enumerate}[(i)]
\item $\displaystyle \p{\inf_{x\in\texttt{X}}\{ \hat{\phi}_n (x) - \phi(x) \} < -\eta\ \emph{ i.o.}} = 0\ \ \ \forall\ \eta > 0,$
\item $\displaystyle \p{\sup_{x\in\texttt{X}}\{ \hat{\phi}_n (x) - \phi(x) \} > \eta\ \emph{ i.o.}} = 0\ \ \ \forall\ \eta > 0,$
\item $\displaystyle \sup_{x\in\texttt{X}} \{ |\hat{\phi}_n (x) - \phi(x)|\} \cas 0$.
\end{enumerate}
\end{lemma}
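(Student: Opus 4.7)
Statement (iii) is an immediate consequence of (i) and (ii): given any $\epsilon>0$, the event $\sup_{x\in\texttt{X}}|\hat{\phi}_n(x)-\phi(x)|<\epsilon$ is the intersection of the complements of the events appearing in (i) and (ii) with $\eta=\epsilon$, so intersecting the probability-one events obtained from (i) and (ii) at $\eta=1/k$, $k\in\mathbb{N}$, yields $\sup_{x\in\texttt{X}}|\hat{\phi}_n-\phi|\to 0$ almost surely. For (i) and (ii), the plan is to upgrade the ``at some point'' statements of Lemma \ref{l21} to uniform statements by combining them with the uniform Lipschitz property of $\hat{\phi}_n$ furnished by Lemma \ref{l18}.

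Fix $\eta>0$. Because $\texttt{X}$ is compact and contained in the open set $\mathfrak{X}^\circ$, choose $r>0$ so that the closed $r$-neighborhood $\texttt{X}_r:=\{y\in\mathbb{R}^d:d(y,\texttt{X})\le r\}$ is still a compact subset of $\mathfrak{X}^\circ$. Lemma \ref{l18} applied to $\texttt{X}_r$ yields a constant $K>0$ and a measurable null set off of which, for all sufficiently large $n$, every subgradient of $\hat{\phi}_n$ at any point of $\texttt{X}_r$ has norm at most $K$; on that event, for all large $n$ the function $\hat{\phi}_n$ is $K$-Lipschitz on every convex subset of $\texttt{X}_r$. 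The convex function $\phi$ is likewise Lipschitz on $\texttt{X}_r$ with some constant $L$, since it is finite on the open set $\mathfrak{X}^\circ$.

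Choose $\delta\in(0,r)$ with $4(K+L)\delta<\eta$ and cover $\texttt{X}$ with finitely many open balls $B_1,\ldots,B_N$ of radius $\delta$ centered at points of $\texttt{X}$; each $B_i$ is convex, has positive Lebesgue measure, and is contained in $\texttt{X}_r$. Applying Lemma \ref{l21}(ii) to each $B_i$ with threshold $\eta/2$ produces a probability-one event on which, for every $i$, there eventually exists $x'_{n,i}\in B_i$ with $\hat{\phi}_n(x'_{n,i})<\phi(x'_{n,i})+\eta/2$. On the intersection with the Lipschitz event, for any $x\in\texttt{X}$ and any $i$ with $x\in B_i$,
\[
\hat{\phi}_n(x)\le \hat{\phi}_n(x'_{n,i})+2K\delta<\phi(x'_{n,i})+\eta/2+2K\delta\le\phi(x)+2(K+L)\delta+\eta/2<\phi(x)+\eta,
\]
which is (ii); a symmetric argument using Lemma \ref{l21}(i) gives (i). The only subtlety is that Lemma \ref{l18} is stated with outer probability while Lemma \ref{l21} uses ordinary probability, but this is harmless since an outer-probability-zero event is contained in a measurable null set, and the suprema and infima in the conclusions of Lemma \ref{l23} are measurable by Lemma \ref{l6}(v) together with the continuity of $\hat{\phi}_n$ and $\phi$.
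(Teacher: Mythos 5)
Your argument is correct and takes essentially the same route as the paper: it combines the ``$\hat{\phi}_n$ comes within $\eta/2$ of $\phi$ at some point of each small convex piece'' statement of Lemma \ref{l21} with the uniform subgradient (hence Lipschitz) bound of Lemma \ref{l18} over a fine cover of $\texttt{X}$, and then obtains (iii) from (i) and (ii); the paper merely partitions a covering finite union of rectangles into subrectangles of diameter $\eta/(3K_\texttt{X})$ instead of using balls inside an enlarged compact neighborhood, which is cosmetic. The only slip is that Lemma \ref{l21} is stated for compact sets, so apply it to the closed balls $\overline{B_i}$ (still contained in $\texttt{X}_r$ and of positive Lebesgue measure) rather than the open ones --- an immaterial change.
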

\noindent {\bf Step VII:} We can now complete the proof of Theorem \ref{teo1}. Consider the class $\mathfrak{C}$ of all open rectangles $\mathcal{R}$ such that $\overline{\mathcal{R}}\subset\mathfrak{X}^\circ$ and whose vertices have rational coordinates. Then, $\mathfrak{C}$ is countable and $\bigcup_{\mathcal{R}\in\mathfrak{C}}\mathcal{R} = \mathfrak{X}^\circ$. Observe that Lemmas \ref{l16} and \ref{l17} imply that for any finite union $A := \mathcal{R}_1\cup\cdots\cup\mathcal{R}_m$ of open rectangles $\mathcal{R}_1,\ldots,\mathcal{R}_m\in\mathfrak{C}$ there is, with probability one, $n_0\in\mathbb{N}$ such that the sequence $(\hat{\phi}_n)_{n=n_0}^\infty$ is finite on $\conv{A}$. From Lemma \ref{l23} we know that the least squares estimator converges at all rational points in $\mathfrak{X}^\circ$ with probability one. Then, Theorem 10.8, page 90 of \cite{rca} implies that ($i$) holds if $\mathfrak{X}^\circ$ is replaced by the convex hull of a finite union of rectangles belonging to $\mathfrak{C}$. Since there are countably many of such unions and any compact subset of $\mathfrak{X}^\circ$ is contained in one of those unions, we see that $(i)$ holds. An application of Theorem 24.5, page 233 of \cite{rca} on an open rectangle $C$ containing $x$ and satisfying $\overline{C}\subset\mathfrak{X}^\circ$ gives ($ii$) and ($iii$). Note that ($iv$) is a consequence of ($iii$). $\hfill \square \\$

{\it Proof of Theorem \ref{teo2}.} To prove the desired result we need the following lemma (whose proof is provided in Section \ref{pl22}) from convex analysis. The result is an extension of Theorem 25.7, page 248 of \cite{rca}, and might be of independent interest.

\begin{lemma}\label{l22}
Let $\mathcal{C}\subset\mathbb{R}^d$ be an open, convex set and $f$ a convex function which is finite and differentiable on $\mathcal{C}$. Consider a sequence of convex functions $(f_n)_{n=1}^\infty$ which are finite on $\mathcal{C}$ and such that $f_n\rightarrow f$ pointwise on $\mathcal{C}$. Then, if $\texttt{X}\subset\mathcal{C}$ is any compact set,
\[ \sup_{\begin{subarray}{c}x\in\texttt{X}\\ \xi\in\partial f_n (x)\end{subarray}}\left\{ |\xi - \nabla f(x)|\right\} \rightarrow 0. \]
\end{lemma}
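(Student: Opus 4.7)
The plan is to argue by contradiction, combining pointwise convergence of convex functions with the standard closed-graph property of the subdifferential. First, I would invoke the fact (Rockafellar, Theorem 10.8) that pointwise convergence of convex functions on an open convex set $\mathcal{C}$ implies uniform convergence on every compact subset of $\mathcal{C}$. Choose an open bounded set $U$ with $\texttt{X}\subset U$ and $\overline{U}\subset\mathcal{C}$; then $f_n\to f$ uniformly on $\overline{U}$, so the sequence $(f_n)_{n=1}^\infty$ is uniformly bounded on $\overline{U}$. By Theorem 10.4 (or 24.7) of Rockafellar, convex functions uniformly bounded on a neighborhood of $\texttt{X}$ are uniformly Lipschitz on $\texttt{X}$ with a common constant, and hence the subgradients $\partial f_n(x)$ for $x\in\texttt{X}$ are contained in a common bounded ball $M\mathbf{B}$ for some $M>0$ and all $n$ large enough.

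Now suppose the conclusion fails. Then there exist $\epsilon>0$, a subsequence $(n_k)_{k=1}^\infty$, points $x_k\in\texttt{X}$ and subgradients $\xi_k\in\partial f_{n_k}(x_k)$ such that $|\xi_k-\nabla f(x_k)|\geq\epsilon$ for every $k$. By compactness of $\texttt{X}$ and the uniform bound $|\xi_k|\leq M$ established above, we may pass to a further subsequence (still indexed by $k$) along which $x_k\to x_0\in\texttt{X}$ and $\xi_k\to\xi_0\in\mathbb{R}^d$. Since $f$ is convex and differentiable throughout $\mathcal{C}$, its gradient $\nabla f$ is continuous on $\mathcal{C}$ (Rockafellar, Corollary 25.5.1), so $\nabla f(x_k)\to\nabla f(x_0)$ and consequently $|\xi_0-\nabla f(x_0)|\geq\epsilon$.

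The key step is to show that $\xi_0\in\partial f(x_0)$, which will contradict the differentiability of $f$ at $x_0$ (since then $\partial f(x_0)=\{\nabla f(x_0)\}$). By the subgradient inequality for $f_{n_k}$ at $x_k$, for every $y\in\mathcal{C}$,
\[ f_{n_k}(y)\geq f_{n_k}(x_k)+\langle \xi_k,\, y-x_k\rangle. \]
Using uniform convergence of $f_{n_k}$ to $f$ on the compact set $\overline{U}$ together with the continuity of $f$, we get $f_{n_k}(x_k)\to f(x_0)$; also $f_{n_k}(y)\to f(y)$ for each $y\in\mathcal{C}$; and $\langle\xi_k,y-x_k\rangle\to\langle\xi_0,y-x_0\rangle$. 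Passing to the limit yields $f(y)\geq f(x_0)+\langle\xi_0,y-x_0\rangle$ for every $y\in\mathcal{C}$, i.e.\ $\xi_0\in\partial f(x_0)=\{\nabla f(x_0)\}$, the desired contradiction.

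The main obstacle is the first step: upgrading pointwise convergence to the uniform Lipschitz bound on subgradients over $\texttt{X}$. This requires picking an intermediate open set $U$ with $\texttt{X}\subset U\subset\overline{U}\subset\mathcal{C}$ and using uniform convergence on $\overline{U}$ to control $\|f_n\|_{\overline{U}}$, then translating a uniform bound on $|f_n|$ into a uniform bound on subgradients on the smaller set $\texttt{X}$. Once this uniform boundedness is in hand, the rest of the argument is a routine closed-graph/subsequence extraction, leveraging continuity of $\nabla f$ and the pointwise limit to identify any cluster point of $(\xi_k)$ as an element of $\partial f(x_0)$.
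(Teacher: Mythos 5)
Your proof is correct, and it reaches the conclusion by a route that differs from the paper's in its key ingredients, though both arguments share the contradiction-plus-compactness skeleton. The paper never establishes a uniform bound on the subgradients of the $f_n$'s: it negates the one-sided inequality $\langle \xi,\eta\rangle \leq \langle \nabla f(x),\eta\rangle + \epsilon$ uniformly over directions $\eta$ in the unit ball, extracts convergent subsequences only from $\texttt{X}$ and $\textbf{B}$, bounds the difference quotients $\bigl(f_{k_n}(x_{k_n}+h\eta_{k_n})-f_{k_n}(x_{k_n})\bigr)/h$ from below by $\langle \xi_{k_n},\eta_{k_n}\rangle$, and then obtains the contradiction directly from Theorem 24.5 of \cite{rca}, which already encodes the upper semicontinuity of subdifferentials (equivalently, of directional derivatives) under pointwise convergence. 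You instead prove that semicontinuity from scratch: you first upgrade pointwise to uniform convergence on compacts (Theorem 10.8), deduce a uniform bound on $\bigcup_{x\in\texttt{X}}\partial f_n(x)$ via the standard quantitative Lipschitz estimate (here Rockafellar's Theorems 10.4/24.7 are stated for a single function, so strictly you need the elementary two-ball argument giving a constant depending only on the sup bound and the gap between $\texttt{X}$ and $\partial U$ -- you correctly flag and sketch this), extract a convergent subsequence of subgradients, and pass to the limit in the subgradient inequality using $f_{n_k}(x_k)\to f(x_0)$; note that the limiting inequality is only obtained for $y\in\mathcal{C}$, but that local version already forces $\xi_0=\nabla f(x_0)$ at a point of differentiability, so the contradiction stands. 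The trade-off: your argument is more self-contained and makes the closed-graph mechanism explicit, at the cost of two extra steps (uniform convergence and the equi-Lipschitz bound) that the paper's directional formulation, leaning on Theorem 24.5, does not need.
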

Defining the class $\mathfrak{C}$ of open rectangles as in the proof of Theorem \ref{teo1}, one can use a similar argument to obtain Theorem \ref{teo2} from an application of Theorem \ref{teo1} and the previous lemma. $\hfill \square$

\subsection{The componentwise nonincreasing case}
The regression function $\phi$ is now assumed to be convex and componentwise nonincreasing. Recalling the notation defined in Section \ref{ccni}, we now have that Theorems \ref{teo1} and \ref{teo2} still hold with $\hat{\phi}_n$ replaced by $\hat{\varphi}_n$. In view of the fact that the proof of the results is very similar to that when $\phi$ is just convex, we omit the proof and sketch the main differences. The proof of the main results in Section \ref{sec2} relied essentially on two key facts:
\begin{enumerate}[(i)]
\item The finite sample properties of $\hat{\phi}_n$ established in Lemma \ref{l6}.

\item The vector $(\hat{\phi}_n(X_1), \ldots, \hat{\phi}_n(X_n))' \in \mathbb{R}^n$ is the $\mathcal{L}_2$ projection of $(Y_1,\ldots,$ $Y_n)$ on the closed, convex cone $\mathcal{K}_\mathcal{X}$ of all evaluations of proper convex functions on $(X_1,\ldots,$ $ X_n)$. Also, note that $(\phi(X_1),\ldots,\phi(X_n))'\in\mathcal{K}_\mathcal{X}.$
\end{enumerate}
We know from Lemma \ref{l11} that $\hat{\varphi}_n$ has similar finite sample properties as its convex counterpart. Note that if $\phi$ is convex and componentwise nonincreasing $(\phi(X_1),\ldots, $ $\phi(X_n))' \in \mathcal{Q}_\mathcal{X}$ and $(\hat{\varphi}_n(X_1), \ldots, \hat{\varphi}_n(X_n))' \in \mathbb{R}^n$ is the $\mathcal{L}_2$ projection of $(Y_1,\ldots,Y_n)$ onto $\mathcal{Q}_\mathcal{X}$.

From these considerations and the nature of the arguments used to prove Theorems \ref{teo1} and \ref{teo2}, it follows that all but one of those arguments carry forward to the componentwise nonincreasing case; the only difference being the entropy calculation of Lemma \ref{l19}. At some point in that proof, one breaks the rectangle $[-K,K]^d$ into a family of subrectangles in order to approximate the subdifferentials of the class $\mathcal{D}_{K,\texttt{X}}$. It is easily seen that the same argument holds in the componentwise nonincreasing case if one instead uses a partition of $[-K,0]^d$ to approach the subdifferentials of the corresponding class $\mathcal{D}_{K,\texttt{X}}$ for componentwise nonincreasing convex functions. By doing this, the resulting function $g$ will be convex and componentwise nonincreasing and (\ref{ec30}), (\ref{ec31}) and (\ref{ec32}) will still hold for the corresponding class $\mathcal{H}_{n,\epsilon}$. Then, the conclusions of Lemma \ref{l19} are also true for the componentwise nonincreasing case and we can conclude that our main results are valid in this case too.

\section{Proofs of the lemmas}\label{sec4}
Here we prove the lemmas involved in the proof of the main theorem. To prove these, we will need additional auxiliary results from matrix algebra and convex analysis, which may be of independent interest and are proved in the Appendix.

\subsection{Proof of Lemma \ref{l15}}\label{pl15}
We will first show that the event \\ $\left[\inf_{x\in\texttt{X}}\left\{ \hat{\phi}_n (x) - \phi(x)\right\} \geq M\ \textrm{i.o.}\right]$ has probability zero. Under this event, there is a subsequence $(n_k)_{k=1}^\infty$ such that $\inf_{x\in\texttt{X}}\left\{ \hat{\phi}_{n_k} (x) - \phi(x)\right\} \geq M$ $\forall$ $k\in\mathbb{N}$. Then (\ref{ec42}) implies that for this subsequence, with probability one, we have
\begin{eqnarray}
\lsup_{k\rightarrow\infty} \frac{1}{N_{n_k}(\texttt{X})}\sum_{X_j\in\texttt{X}} \{Y_j - \hat{\phi}_{n_k} (X_j)\} &\leq& -M.\label{ec20}
\end{eqnarray}
On the other hand, it is seen (by solving the corresponding quadratic programming problems; see, e.g., Exercise 16.2, page 484 of \cite{nowr}) that for any $\eta > 0$, $m\in\mathbb{N}$
\begin{eqnarray}
\inf \left\{ \frac{1}{m}\sum_{1\leq j\leq m} |\xi^j|^2 \ : \frac{1}{m}\sum_{1\leq j\leq m} \xi^j \geq \eta,\ \xi\in\mathbb{R}^{m} \right\} &=& \eta^2, \label{eq:Inf1} \\
\inf \left\{ \frac{1}{m}\sum_{1\leq j\leq m} |\xi^j|^2 \ : \frac{1}{m}\sum_{1\leq j\leq m} \xi^j \leq -\eta,\ \xi\in\mathbb{R}^{m} \right\} &=& \eta^2. \label{eq:Inf2}
\end{eqnarray}
For $0 < \delta < M$, using (\ref{eq:Inf2}) with $\eta = M - \delta$ together with (\ref{ec44}) and (\ref{ec20}) we get that, with probability one, we must have
\[ \linf_{k\rightarrow\infty} \frac{1}{n_{k}} \sum_{j=1}^{n_k} (Y_j - \hat{\phi}_{n_k}(X_j))^2 \geq \nu (\texttt{X}) (M-\delta)^2. \]
Letting $\delta\rightarrow 0$ we actually get
\[ \linf_{k\rightarrow\infty} \frac{1}{n_{k}} \sum_{j=1}^{n_k} (Y_j - \hat{\phi}_{n_k}(X_j))^2  \geq \nu(\texttt{X})M^2 > \sigma^2 = \lsup_{k\rightarrow\infty} \frac{1}{n_{k}} \sum_{j=1}^{n_k} (Y_j - \phi(X_j))^2\ \ \textrm{a.s.} \]
which is impossible because $\hat{\phi}_{n_k}$ is the least squares estimator. Therefore,
\[\p{\inf_{x\in\texttt{X}}\left\{ \hat{\phi}_n (x) - \phi(x)\right\} \geq M\ \textrm{i.o.}} = 0.\]
A similar argument now using (\ref{eq:Inf1}) gives
\[\p{\sup_{x\in\texttt{X}}\left\{ \hat{\phi}_n (x) - \phi(x)\right\} \leq -M\ \textrm{i.o.}} = 0, \]
which completes the proof of the lemma. $\hfill \square \\$

Before we prove Lemmas \ref{l16} and \ref{l17}, we need some additional results from matrix algebra. For convenience, we state them here, but postpone their proofs to Section \ref{apapendice} in the Appendix.

We first introduce some notation. We write $\textbf{e}_j \in \mathbb{R}^d$ for the vector whose components are given by $\mathbf{e}_j^k = \delta_{jk}$, where $\delta_{jk}$ is the Kronecker $\delta$. We also write $\textbf{e} = \textbf{e}_1 + \ldots + \textbf{e}_d$ for the vector of ones in $\mathbb{R}^d$. For $\alpha\in\{-1,1\}^d$ we write
\[\mathcal{R}_\alpha = \left\{\sum_{k=1}^d \theta^k \alpha^k \textbf{e}_k : \theta\geq 0, \theta\in\mathbb{R}^d\right\}\] for the orthant in the $\alpha$ direction.
For any hyperplane $\mathcal{H}$ defined by the normal vector $\xi\in\mathbb{R}^d$ and the intercept $b\in\mathbb{R}$, we write $\mathcal{H} = \{ x\in\mathbb{R}^d : \langle\xi,x\rangle = b\}$, $\mathcal{H}^+ = \{ x\in\mathbb{R}^d : \langle\xi,x\rangle > b\}$ and $\mathcal{H}^- = \{ x\in\mathbb{R}^d : \langle\xi,x\rangle < b\}$. For $r>0$ and $x_0\in\mathbb{R}^d$ we will write $B(x_0,r) = \{x\in\mathbb{R}^d: |x-x_0|<r\}$. We denote by $\mathbb{R}^{d\times d}$ the space of $d \times d$ matrices endowed with the topology defined by the $\|\cdot\|_2$ norm (where $ \| A\|_2 = \sup_{|x|\leq 1} \{|Ax|\}$ and can be shown to be equal to the largest singular value of $A$; see \cite{masp}).

\begin{lemma}\label{l12}
Let $r>0$. There is a constant $R_r>0$, depending only on $r$ and $d$, such that for any $\rho_*\in (0,R_r)$ there are $\rho,\rho^* > 0$ with the property: for any $\alpha\in\{-1,1\}^d$ and any $d$-tuple of vectors $\beta=\{x_1,\ldots,x_d\}\subset\mathbb{R}^d$ such that $x_j \in B(\alpha^j r\textbf{e}_j, \rho)$ $\forall$ $j=1,\ldots,d$, there is a unique pair $(\xi_{\alpha,\beta},b_{\alpha,\beta})$, with $\xi_{\alpha,\beta}\in\mathbb{R}^d$, $|\xi_{\alpha,\beta}| =1$ and  $b_{\alpha,\beta}>0$ for which the following statements hold:
\begin{enumerate}[(i)]
\item $\beta$ form a basis for $\mathbb{R}^d$.
\item $x_1,\ldots,x_d\in\mathcal{H}_{\alpha,\beta} : = \{x\in\mathbb{R}^d:\langle \xi_{\alpha,\beta}, x \rangle = b_{\alpha,\beta} \}$.
\item $\displaystyle \min_{1\leq j \leq d} \{ |\xi_{\alpha,\beta}^j |\} > 0$.
\item $B(0,\rho_*)\subset\mathcal{H}_{\alpha,\beta}^-$.
\item $\{x\in\mathbb{R}^d: |x| \geq \rho^*\}\cap\mathcal{R}_\alpha \subset \mathcal{H}_{\alpha,\beta}^+$.
\item $B(-\alpha^j r \textbf{e}_j,\rho)\subset \{x\in\mathbb{R}^d:\langle \xi_{\alpha,\beta}, x \rangle < 0\}$ for all $j=1,\ldots,d$.
\item For any $w_1\in B\left(0,\frac{\rho_*}{16\sqrt{d}}\right)$ and $w_2\in B\left(\frac{3\rho_*}{8\sqrt{d}}\alpha,\frac{\rho_*}{8\sqrt{d}}\right)$ we have
    \[ \min_{1\leq j\leq d}\left\{ \left(X_\beta^{-1}\left(w_1 + t(w_2-w_1)\right)\right)^j \right\}>0\ \ \forall \ t\geq 1 \]
    where $X_\beta = (x_1,\ldots,x_d)\in\mathbb{R}^{d\times d}$ is the matrix whose $j$'th column is $x_j$.
\end{enumerate}
\end{lemma}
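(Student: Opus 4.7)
The plan is to treat the unperturbed baseline first, where the perturbation radius $\rho$ is zero and the points are exactly $x_j = \alpha^j r \mathbf{e}_j$, and then push the conclusion through to nearby configurations by a continuity argument. In the baseline case, $X_\beta = r \cdot \mathrm{diag}(\alpha^1,\ldots,\alpha^d)$ is invertible (yielding (i)), and the unique hyperplane containing the $x_j$'s has equation $\langle \alpha, x \rangle = r$; normalising gives the candidate $\xi_0 := \alpha/\sqrt{d}$ and $b_0 := r/\sqrt{d}$, which immediately gives (ii) and (iii), since $|\xi_0^j| = 1/\sqrt{d}$ for every $j$. The pair is uniquely determined once one requires $|\xi| = 1$ and $b > 0$, which is what picks a single representative out of the two possible normalisations.

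For (iv), Cauchy--Schwarz gives $B(0, r/\sqrt{d}) \subset \mathcal{H}^-$ in the baseline, so I would set $R_r := r/\sqrt{d}$; any $\rho_* \in (0, R_r)$ then satisfies (iv) with positive margin. For (v), if $x = \sum_k \theta^k \alpha^k \mathbf{e}_k \in \mathcal{R}_\alpha$ with $\theta \geq 0$ and $|x| \geq \rho^*$, then $\langle \alpha, x\rangle = \sum_k \theta^k \geq |x| \geq \rho^*$, so taking $\rho^* > r$ (say $\rho^* := 2r$) forces $x \in \mathcal{H}^+$ with a fixed gap. Property (vi) in the baseline follows from $\langle \xi_0, -\alpha^j r \mathbf{e}_j\rangle = -r/\sqrt{d} < 0$. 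For (vii), I would use the decomposition
$$X_\beta^{-1}\bigl(w_1 + t(w_2 - w_1)\bigr) \;=\; X_\beta^{-1} w_2 \,+\, (t-1)\,X_\beta^{-1}(w_2 - w_1),$$
so it is enough to ensure that both $X_\beta^{-1} w_2$ and $X_\beta^{-1}(w_2 - w_1)$ are componentwise strictly positive. In the baseline $X_\beta^{-1} = r^{-1}\mathrm{diag}(\alpha)$, and a direct computation using $|w_1| \leq \rho_*/(16\sqrt{d})$ and $|w_2 - (3\rho_*/(8\sqrt{d}))\alpha| \leq \rho_*/(8\sqrt{d})$ yields $(X_\beta^{-1} w_2)^j \geq \rho_*/(4r\sqrt{d})$ and $(X_\beta^{-1}(w_2-w_1))^j \geq 3\rho_*/(16r\sqrt{d})$, uniformly in $j$.

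To finish, I would extend each property to the perturbed configuration by openness. The map $(x_1,\ldots,x_d) \mapsto (\xi_{\alpha,\beta}, b_{\alpha,\beta}, X_\beta^{-1})$ is continuous on the open set where $X_\beta$ is invertible (by Cramer's rule, plus the normalisations $|\xi|=1$ and $b > 0$ to select a unique continuous branch), and every one of properties (i)--(vii) is governed by strict inequalities. Hence for each $\alpha \in \{-1,1\}^d$ there is a $\rho_\alpha > 0$ such that the baseline margins survive perturbations of size at most $\rho_\alpha$; setting $\rho := \min_{\alpha} \rho_\alpha$ (possible since $\{-1,1\}^d$ is finite) gives a single $\rho$ that works uniformly in $\alpha$.

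The main obstacle is (vii), because it quantifies over the unbounded ray $t \in [1,\infty)$ rather than a compact set; the decomposition above is what makes it tractable, reducing the verification to two componentwise positivity conditions that are independent of $t$. A secondary technicality is the order in which the constants are produced: $R_r$ is chosen first in terms of $r$ and $d$ alone; then $\rho_* \in (0, R_r)$ is given; and only afterwards are $\rho^*$ (large, in terms of $r$ alone) and $\rho$ (small, in terms of $\rho_*$, $r$ and $d$) selected so that all the strict inequalities in (iii)--(vii) survive. Keeping this bookkeeping straight is where most of the care is required.
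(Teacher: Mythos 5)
Your proposal is correct and, at the skeleton level, follows the same route as the paper: verify everything at the baseline configuration $x_j=\alpha^j r\textbf{e}_j$, where $X_\beta=r\,\mathrm{diag}(\alpha)$, $\xi_0=\alpha/\sqrt{d}$, $b_0=r/\sqrt{d}$ (so $R_r=r/\sqrt{d}$, exactly the paper's choice after scaling $r=1$), and then transfer to nearby configurations by continuity of $\beta\mapsto(\xi_{\alpha,\beta},b_{\alpha,\beta},X_\beta^{-1})$ and a finite minimum over $\alpha\in\{-1,1\}^d$. Where you genuinely diverge is in the two delicate items. For (vii) you write $X_\beta^{-1}(w_1+t(w_2-w_1))=X_\beta^{-1}w_2+(t-1)X_\beta^{-1}(w_2-w_1)$ and reduce the unbounded-$t$ statement to componentwise positivity of $X_\beta^{-1}w_2$ and $X_\beta^{-1}(w_2-w_1)$ over the compact balls, with the explicit diagonal-case margins $\rho_*/(4r\sqrt{d})$ and $3\rho_*/(16r\sqrt{d})$ (which check out); the paper instead splits off $w_1$, introduces the Lipschitz functionals $\varphi,\psi$ on $\mathbb{R}^{d\times d}$, and needs a separate KKT computation (its Lemma A.3) to evaluate the baseline optimum $3\rho_*/(16\sqrt{d})$. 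Your decomposition buys a shorter argument with no auxiliary optimization lemma. For (v) you use $\langle\alpha,x\rangle=\sum_k\theta^k\geq|x|$ on $\mathcal{R}_\alpha$, whereas the paper computes $\sup\{|x|:x\geq0,\ \langle\xi_{\alpha,\beta},x\rangle\leq b_{\alpha,\beta}\}=b_{\alpha,\beta}/\min_j|\xi_{\alpha,\beta}^j|$ via the vertices of the polytope and takes $\rho^*$ larger than that; again yours is more elementary and gives the cleaner constant $\rho^*=2r$.

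One place to tighten the write-up: your closing claim that ``every property is governed by strict inequalities, so the baseline margins survive small perturbations'' is fine for (i)--(iv) and (vi) (and for (vii), thanks to your decomposition), but for (v) the quantifier runs over the unbounded set $\{|x|\geq\rho^*\}\cap\mathcal{R}_\alpha$, and the perturbation term $|\langle\xi_{\alpha,\beta}-\xi_0,x\rangle|$ can be of order $|x|$, so a ``fixed gap'' argument does not literally apply. The fix is immediate from the inequality you already proved: for $x\in\mathcal{R}_\alpha$, $\langle\xi_{\alpha,\beta},x\rangle\geq|x|\bigl(1/\sqrt{d}-|\xi_{\alpha,\beta}-\xi_0|\bigr)$, so with $|\xi_{\alpha,\beta}-\xi_0|$ small, $|x|\geq 2r$ and $b_{\alpha,\beta}$ close to $r/\sqrt{d}$ the strict inequality holds; the gap grows linearly in $|x|$ at the same rate as the perturbation, which is what saves the argument. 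With that sentence added, the proof is complete.
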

Figure \ref{figura1}a illustrates the above lemma when $d=2$ and $\alpha = (1,1)$. The lemma states that whatever points $x_1$ and $x_2$ are taken inside the circles of radius $\rho$ around $\alpha^1 r\textbf{e}_1$ and $\alpha^2 r\textbf{e}_2$, respectively, $B(0,\rho_*)$ and $\{x\in\mathbb{R}^d: |x| \ge \rho^*\}\cap\mathcal{R}_\alpha$ are contained, respectively, in the half-spaces $\mathcal{H}_{\alpha,\beta}^-$ and $\mathcal{H}_{\alpha,\beta}^+$. Assertion $(vii)$ of the lemma implies that all the points in the half line $\{w_1 + t(w_2-w_1\}_{t \ge 1}$ should have positive co-ordinates with respect to the basis $\beta$ as they do with respect to the basis $\{\alpha^j \textbf{e}_j\}_{j=1}^d$. We refer the reader to Section \ref{pl12} for a complete proof of Lemma \ref{l12}.

We now state two other useful results, namely Lemma \ref{l13} and Lemma \ref{l14}, but defer their proofs to Section \ref{pl13} and Section \ref{pl14} respectively.
\begin{figure}
\centering \includegraphics[height=6cm,width=6cm]{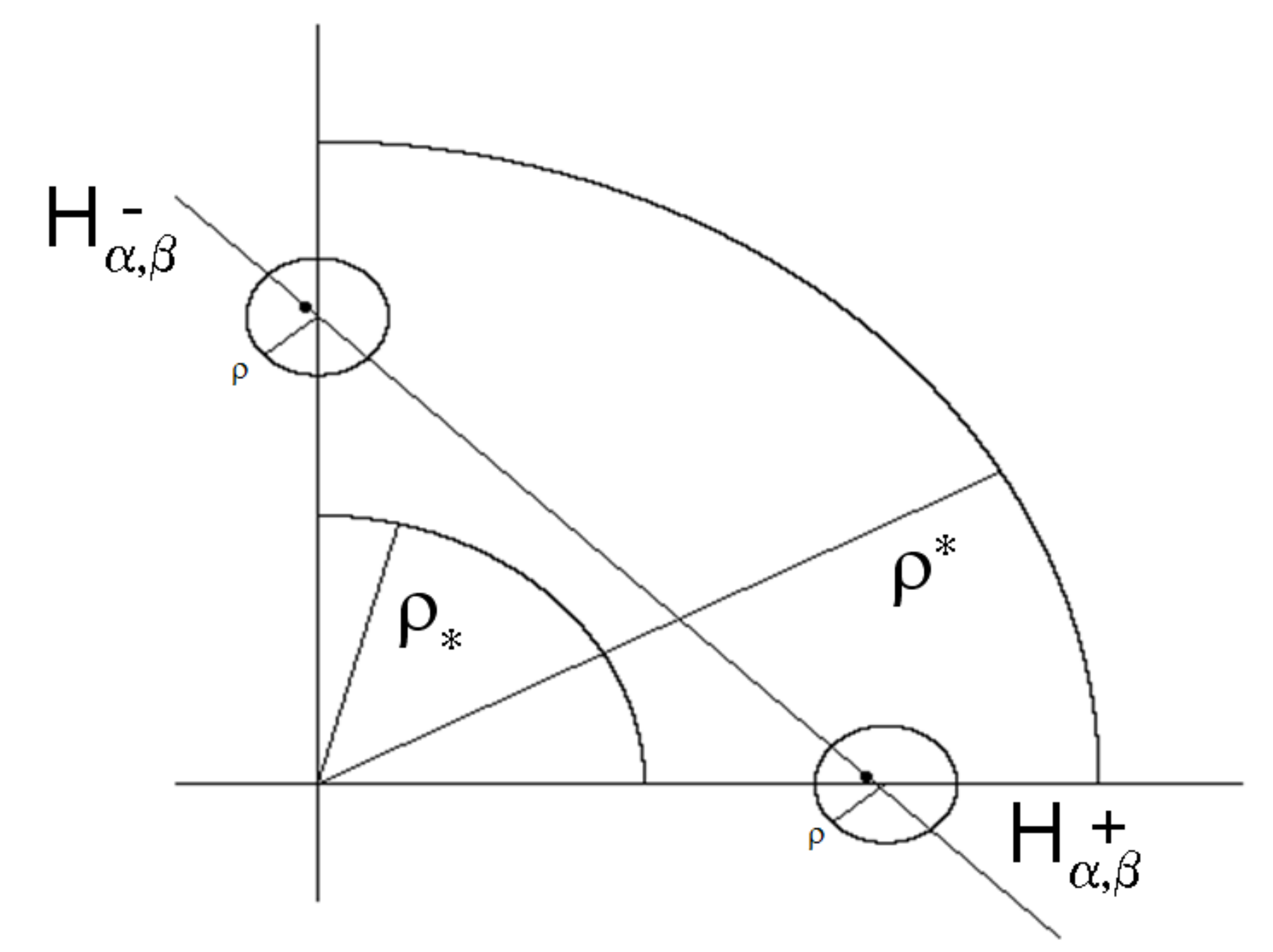}\includegraphics[height=6cm,width=7cm]{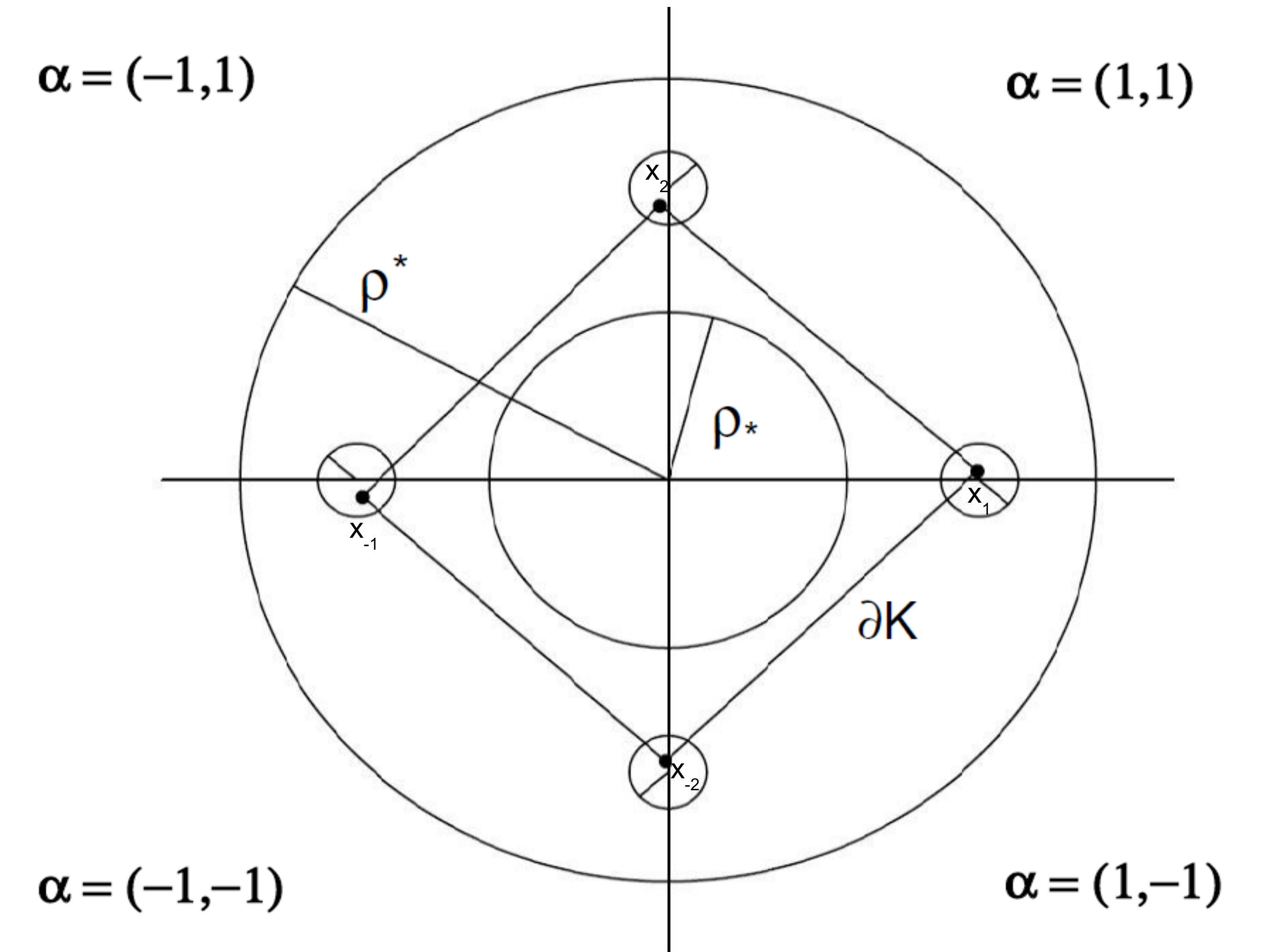}
\caption{Explanatory diagram for (a) Lemma \ref{l12} (left panel); (b) Lemma \ref{l13} (right panel).}\label{figura1}
\end{figure}
\begin{lemma}\label{l13}
Let $r>0$ and consider the notation of Lemma \ref{l12} with the positive numbers $\rho$, $\rho_*$ and $\rho^*$ as defined there. Take 2d vectors $\{x_{\pm 1},\ldots, x_{\pm d}\}$ $\subset\mathbb{R}^d$ such that $x_{\pm j} \in B(\pm r \textbf{e}_j, \rho)$ and for $\alpha\in\{-1,1\}^d$ write $\beta_\alpha = \{x_{\alpha^1 1},x_{\alpha^2 2},$ $ \ldots, x_{\alpha^d d}\}$, $\xi_\alpha = \xi_{\alpha,\beta_\alpha}$, $b_\alpha = b_{\alpha,\beta_\alpha}$ and $\mathcal{H}_\alpha = \mathcal{H}_{\alpha,\beta}$, all in agreement with the setting of Lemma \ref{l12}. Then, if $K = \conv{x_{\pm 1},\ldots,x_{\pm d}}$ we have:
\begin{enumerate}[(i)]
\item $K = \bigcap_{\alpha\in\{-1,1\}^d} \{x\in\mathbb{R}^d:\langle \xi_\alpha , x \rangle \leq b_\alpha \}.$
\item $K^\circ = \bigcap_{\alpha\in\{-1,1\}^d} \{x\in\mathbb{R}^d:\langle \xi_\alpha , x \rangle < b_\alpha \}.$
\item $\partial K = \bigcup_{\alpha\in\{-1,1\}^d} \conv{x_{\alpha^1 1},\ldots,x_{\alpha^d d}}.$
\item {\small $\partial K = \left( \bigcup_{\alpha\in\{-1,1\}^d} \{x\in\mathbb{R}^d:\langle \xi_\alpha , x \rangle = b_\alpha \}\right)\bigcap \left(\bigcap_{\alpha\in\{-1,1\}^d} \{x\in\mathbb{R}^d:\langle \xi_\alpha , x \rangle \leq b_\alpha \}\right).$}
\item $\displaystyle B(0,\rho_*)\subset K^\circ .$
\item $\displaystyle \partial B(0,\rho^*)\subset \emph{Ext}(K).$
\end{enumerate}
\end{lemma}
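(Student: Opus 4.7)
The plan is to prove (i), i.e., $K = P$ where $P := \bigcap_\alpha \{\langle \xi_\alpha, \cdot\rangle \leq b_\alpha\}$, together with the facial description (iii), as the main steps; the other assertions then follow quickly. For the forward inclusion $K \subset P$, it suffices by convexity to check each of the $2d$ vertices $x_{\pm j}$ against each of the $2^d$ inequalities. If the sign $\pm$ matches $\alpha^j$, Lemma \ref{l12}(ii) gives $\langle \xi_\alpha, x_{\pm j}\rangle = b_\alpha$, since $x_{\pm j} \in \beta_\alpha \subset \mathcal{H}_\alpha$. Otherwise $x_{\pm j} = x_{-\alpha^j j} \in B(-\alpha^j r\textbf{e}_j, \rho) \subset \mathcal{H}_\alpha^-$ by Lemma \ref{l12}(vi), so $\langle \xi_\alpha, x_{\pm j}\rangle < 0 < b_\alpha$.

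Next I establish (iii). From the forward inclusion $\mathcal{H}_\alpha$ is a supporting hyperplane of $K$, with $\max_{x\in K}\langle \xi_\alpha, x\rangle = b_\alpha$ attained precisely on $\beta_\alpha$ (no other vertex lies on $\mathcal{H}_\alpha$, again by Lemma \ref{l12}(vi)). Since $\beta_\alpha$ is a basis by Lemma \ref{l12}(i), it is affinely independent, so $K\cap \mathcal{H}_\alpha = \conv{\beta_\alpha}$ is a $(d-1)$-simplex facet of $K$. That these are \emph{all} the facets follows for small enough $\rho$ by a direct outer-normal analysis: if $\xi$ is a unit outer normal to some facet, then $\max_j |\xi^j|\geq 1/\sqrt d$ forces the support value to be of order $r$, which in turn forces, for each $j$, only one of $x_{+j}, x_{-j}$ (the one with sign $\mathrm{sgn}(\xi^j)$) to lie in the face. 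The face's vertex set is therefore $\beta_\alpha$ for $\alpha^j = \mathrm{sgn}(\xi^j)$, so $\partial K = \bigcup_\alpha \conv{\beta_\alpha}$.

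With (iii), the reverse inclusion $P\subset K$ follows by ray-shooting. First $0\in K$: for any unit $u$, $\max_j \langle u, x_{+j}\rangle \geq r/\sqrt d - \rho > 0$ and $\min_j \langle u, x_{-j}\rangle \leq -(r/\sqrt d - \rho) < 0$ for $\rho$ small, so no hyperplane through the origin separates the vertex set, placing $0$ in $K$. Now given $y\in P$ with $y\notin K$, the segment from $0$ to $y$ exits $K$ at $w = t_0 y$ with $t_0\in (0,1)$, so $w\in\partial K = \bigcup_\alpha \conv{\beta_\alpha}$. Hence $\langle \xi_\alpha, w\rangle = b_\alpha$ for some $\alpha$, which gives $\langle \xi_\alpha, y\rangle = b_\alpha/t_0 > b_\alpha$, contradicting $y\in P$.

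Finally, (ii) comes from (i) by taking interiors: $P$ has nonempty interior since $B(0,\rho_*) \subset \bigcap_\alpha \mathcal{H}_\alpha^-$ by Lemma \ref{l12}(iv), and the interior of a finite intersection of closed half-spaces with nonempty interior equals the intersection of the open half-spaces. (iv) is the conjunction of (i) and (iii), using $K \cap \bigcup_\alpha \mathcal{H}_\alpha = \bigcup_\alpha (K\cap\mathcal{H}_\alpha) = \bigcup_\alpha\conv{\beta_\alpha} = \partial K$. (v) is $B(0,\rho_*) \subset \bigcap_\alpha\mathcal{H}_\alpha^- = K^\circ$ by (ii) and Lemma \ref{l12}(iv). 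For (vi), every $x$ with $|x|=\rho^*$ lies in some orthant $\mathcal{R}_\alpha$ and thus, by Lemma \ref{l12}(v), in $\mathcal{H}_\alpha^+$; so $\langle \xi_\alpha, x\rangle > b_\alpha$ and $x\notin K=P$, whence $x\in\textrm{Ext}(K)$ since $K$ is compact. The main obstacle is the ``no spurious facets'' portion of (iii), whose reliance on $\rho$ being small needs to be reconciled with the specific thresholds built into Lemma \ref{l12}; as a backup, Lemma \ref{l12}(vii) can be used to show the simplicial cones generated by the $\beta_\alpha$'s cover $\mathbb{R}^d$, yielding $P\subset K$ by a direct convex-combination construction that writes $y = \sum_j c_j x_{\alpha^j j}$ with $c_j \geq 0$ and $\sum_j c_j \leq 1$, folded together with a fixed convex representation of $0\in K$.
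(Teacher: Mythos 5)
The parts you derive from (i) and (iii) --- the forward inclusion $K\subset P$, and then (ii), (iv), (v), (vi) --- are correct and essentially match the paper. The genuine gap is exactly where you flag it: the ``no spurious facets'' step, which is what carries the entire reverse inclusion $P\subset K$ and hence (i) and (iii). Your outer-normal argument needs a quantitative smallness of $\rho$ (e.g.\ $\rho<\tfrac{r}{2\sqrt d}$, so that $x_{+j}$ and $x_{-j}$ cannot both lie on a supporting hyperplane whose support value is at least $\tfrac{r}{\sqrt d}-\rho$), but the statement of Lemma \ref{l12} only asserts the existence of \emph{some} $\rho>0$ with properties (i)--(vii); no bound of $\rho$ relative to $r$ is part of the lemma (it is only implicit in its proof), so as written this step does not follow from the cited hypotheses. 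It also silently uses that $K$ is full-dimensional (otherwise $\partial K$ need not be the union of facets); this is true but needs a word, e.g.\ $K\not\subset\mathcal H_\alpha$ because $\langle\xi_\alpha,x_{-\alpha^jj}\rangle<0<b_\alpha$ by Lemma \ref{l12}(vi). The backup you propose is not a repair as stated: Lemma \ref{l12}(vii) only controls the $\beta$-coordinates of points $w_1+t(w_2-w_1)$, $t\ge1$, with $w_1,w_2$ confined to two specific small balls near the origin; it does not yield that the simplicial cones generated by the $\beta_\alpha$'s cover $\mathbb{R}^d$, and you give no construction of the claimed representation $y=\sum_j c_jx_{\alpha^jj}$.

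For contrast, the paper closes this step without any extra condition on $\rho$: given $x\neq0$ in the intersection, choose $\alpha$ with $x\in\mathcal R_\alpha$; Lemma \ref{l12}(v) forces $\langle\xi_\alpha,x\rangle>0$, so $r_x=\min\{b_\gamma/\langle\xi_\gamma,x\rangle:\langle\xi_\gamma,x\rangle>0\}$ is well defined, at least $1$, and attained at some $\alpha_x$. Writing $r_xx=\sum_j\theta^jx_{\alpha_x^jj}$ in the basis $\beta_{\alpha_x}$, Lemma \ref{l12}(ii) gives $\sum_j\theta^j=1$, and a negative coefficient $\theta^j<0$ is excluded by flipping the $j$-th sign of $\alpha_x$ and using Lemma \ref{l12}(ii) and (vi) to contradict the minimality of $r_x$; applying the same argument to $-\kappa x$ gives $0\in K$, hence $x\in K$ since $r_x\ge1$. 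Part (iii) is then proved by the same barycentric sign-flip argument applied to boundary points, and (ii), (iv), (v), (vi) follow as you say. To salvage your facet-counting route you would have to import the explicit bound on $\rho$ from the proof of Lemma \ref{l12} (there $\rho\le\tfrac12\rho_\alpha<\tfrac{r}{2\sqrt d}$) and justify doing so, or else replace that step with the sign-flip argument above.
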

Figure \ref{figura1}b illustrates Lemma \ref{l13} for the two-dimensional case. Intuitively, the idea is that as long as the points $x_{\pm 1}$ and $x_{\pm 2}$ belong to $B(\pm r\textbf{e}_1,\rho)$ and $B(\pm r\textbf{e}_2,\rho)$, respectively, we will have $B(0,\rho_*)$ and $\partial B(0,\rho^*)$ as subsets of $K^\circ$ and $\textrm{Ext}(K)$, respectively.
\begin{figure}[h!]
\centering \includegraphics[height=6cm,width=6.0cm]{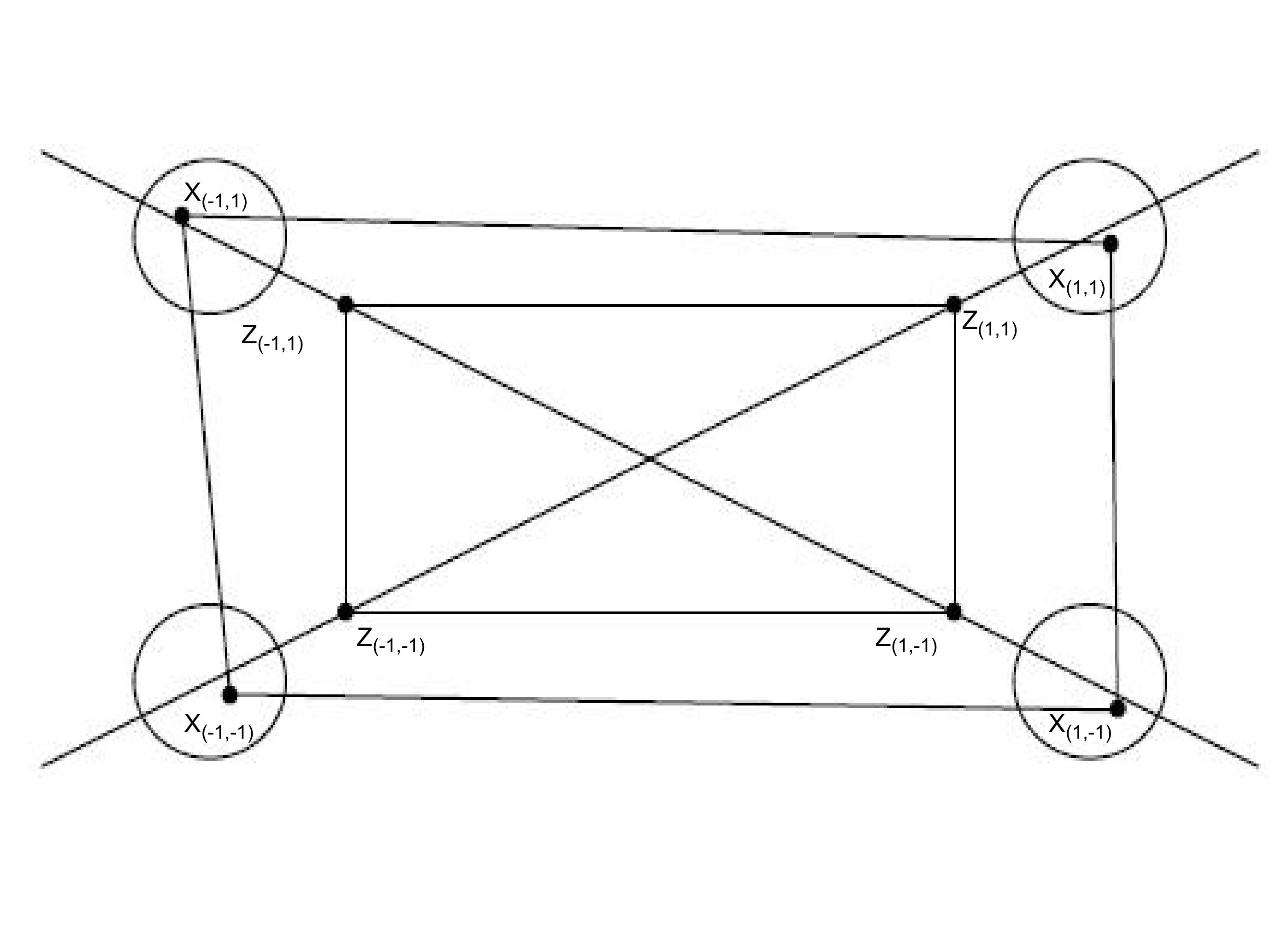} \includegraphics[height=6cm,width=6.5cm]{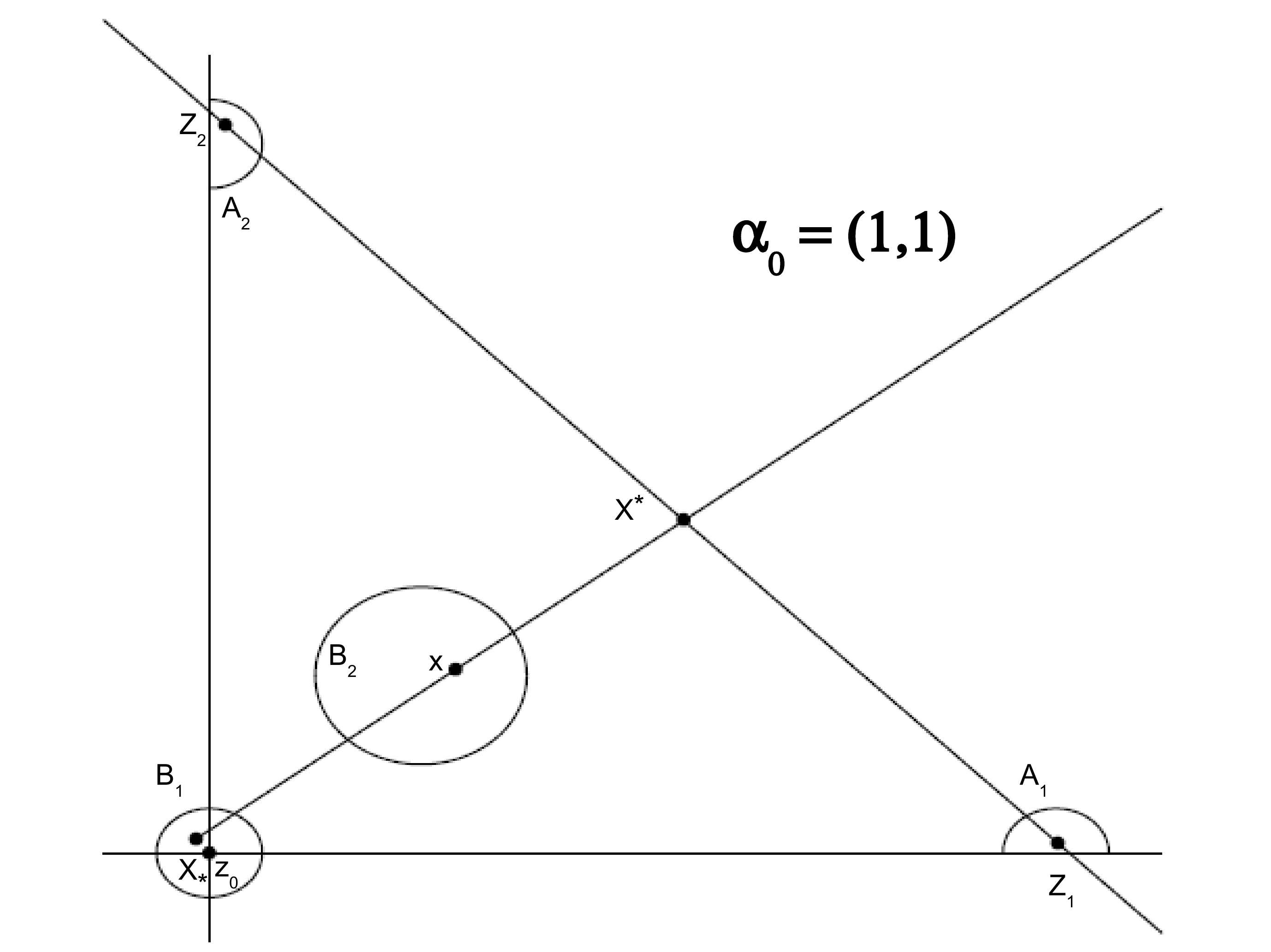}
\caption{Explanatory diagram for (a) Lemma \ref{l14} (left panel); (b) Lemma \ref{l16} (right panel).}\label{figura5}
\end{figure}
\begin{lemma}\label{l14}
Let $[a,b]\subset\mathbb{R}^d$ be a compact rectangle and $r>0$, with $r<\frac{1}{d-2}$ if $d\geq 3$. For each $\alpha\in\{-1,1\}^d$ write $z_\alpha = a+ \sum_{j=1}^d \frac{1+\alpha^j}{2}\left(b^j - a^j\right)\textbf{e}_j$ so that $\{z_\alpha\}_{\alpha\in\{-1,1\}^d}$ is the set of vertices of $[a,b]$. Then, there is $\rho > 0$ such that if $x_\alpha \in B(z_\alpha + r(z_\alpha - z_{-\alpha}),\rho)$ $\forall$ $\alpha\in\{-1,1\}^d$, then
\[ [a,b]\subset\conv{x_\alpha : \alpha\in\{-1,1\}^d}^\circ .\]
\end{lemma}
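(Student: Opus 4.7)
The plan is to recognize the points $y_\alpha := z_\alpha + r(z_\alpha - z_{-\alpha})$ as the vertex set of a concentric dilation of $[a,b]$. Writing $c = (a+b)/2$ for the center of $[a,b]$, one checks that $z_\alpha - z_{-\alpha} = 2(z_\alpha - c)$, and hence $y_\alpha = c + (1+2r)(z_\alpha - c)$. Thus $\conv{y_\alpha : \alpha \in \{-1,1\}^d}$ coincides with the concentric axis-aligned rectangle $[a',b'] := c + (1+2r)([a,b] - c)$, whose $j$-th edge length is $(1+2r)(b^j - a^j)$. Since $1+2r > 1$, the unperturbed inclusion $[a,b] \subset [a',b']^\circ = \conv{y_\alpha}^\circ$ is immediate.

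To pass from the exact vertices $y_\alpha$ to perturbations $x_\alpha \in B(y_\alpha,\rho)$, I would use a support-function stability argument. Set $P = \conv{y_\alpha}$ and $Q = \conv{x_\alpha}$. For any unit vector $\xi \in \mathbb{R}^d$, the perturbation bound yields
\[ h_Q(\xi) \;=\; \max_\alpha \langle \xi, x_\alpha \rangle \;\geq\; \max_\alpha \langle \xi, y_\alpha \rangle - \rho \;=\; h_P(\xi) - \rho. \]
A point $p$ lies in $Q^\circ$ precisely when $\langle \xi, p \rangle < h_Q(\xi)$ for every unit $\xi$, so it suffices to take $\rho$ strictly smaller than a uniform positive lower bound on the gap $h_P(\xi) - \langle \xi, p \rangle$ across $p \in [a,b]$ and $|\xi| = 1$.

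Such a bound is straightforward to compute since both rectangles share the center $c$ and are axis-aligned: $h_P(\xi) - h_{[a,b]}(\xi) = r \sum_{j=1}^d |\xi^j|(b^j - a^j)$, which on the unit sphere is minimized at $\xi = \pm\textbf{e}_j$ for the coordinate $j$ with smallest edge, attaining the value $r \min_j (b^j - a^j) > 0$. Choosing any $\rho < r \min_j (b^j - a^j)$ then forces $\langle \xi, p \rangle \leq h_{[a,b]}(\xi) < h_P(\xi) - \rho \leq h_Q(\xi)$ for every $p \in [a,b]$ and every unit $\xi$, which gives $[a,b] \subset Q^\circ$.

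I expect the only nontrivial step to be the support-function transfer from $P$ to $Q$; the rest is elementary once the dilated-rectangle picture is in place. One curiosity: the hypothesis $r < 1/(d-2)$ for $d \geq 3$ does not appear to play a role in this route, which works for every $r > 0$. The condition may instead appear in the authors' preferred argument if they approach the problem by constructing explicit Carathéodory-type barycentric expansions of points of $[a,b]$ in terms of subfamilies of the $2^d$ perturbed vertices, or if they reduce to Lemma \ref{l13} after translating each vertex of $[a,b]$ to the origin, where the precise geometry of Lemma \ref{l13} would impose such a restriction.
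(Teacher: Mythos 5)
Your proof is correct, and it takes a genuinely different route from the paper's. You observe that the unperturbed points $y_\alpha = z_\alpha + r(z_\alpha - z_{-\alpha})$ are the vertices of the concentric dilation $c + (1+2r)\left([a,b]-c\right)$ of the rectangle about its center $c$, and then transfer the strict inclusion $[a,b]\subset\conv{y_\alpha : \alpha\in\{-1,1\}^d}^\circ$ to the perturbed hull via a support-function stability estimate, which yields the explicit threshold $\rho < r\min_{j}(b^j-a^j)$. The paper argues instead locally at each vertex: after normalizing so that $a=\frac{r}{1+r}b$, it shows that $z_\alpha$ has barycentric coordinates with respect to the $d+1$ nearby outer points $w_\alpha, w_{\alpha_1},\ldots,w_{\alpha_d}$ (with $\alpha_j=\alpha-2\alpha^j\textbf{e}_j$) that are positive and sum to $\frac{dr}{1+2r}<1$, and then uses continuity of the maps $(\Theta,\theta)\mapsto\langle\textbf{e},\Theta(z_\alpha-\theta)\rangle$ and $(\Theta,\theta)\mapsto\min_j\left(\Theta(z_\alpha-\theta)\right)^j$ in the matrix of edge vectors to preserve these sign and sum conditions under perturbation, so that each $z_\alpha$ lies in $\conv{x_\alpha,x_{\alpha_1},\ldots,x_{\alpha_d}}^\circ$ and hence $[a,b]$ lies in the interior of the full hull. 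This confirms your closing guess about the hypothesis $r<\frac{1}{d-2}$ for $d\geq 3$: it is exactly the condition $\frac{dr}{1+2r}<1$, i.e.\ $r(d-2)<1$, needed so that the simplex on the $d+1$ neighboring outer vertices captures $z_\alpha$. Your argument dispenses with that hypothesis entirely (proving a slightly stronger statement, which of course implies the lemma) and produces a concrete, quantitative $\rho$, whereas the paper's continuity argument yields $\rho$ nonconstructively; what the paper's route buys is a vertex-by-vertex simplex decomposition carried out in the same matrix-continuity style as Lemmas \ref{l12} and \ref{l13}, keeping the appendix's toolkit uniform.
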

Figure \ref{figura5}a describes Lemma \ref{l14} in the two-dimensional case. As long as the points $x_{(\pm 1,\pm 1)}$ are chosen in the balls of radius $\rho$ around $z_{(\pm 1,\pm 1)} + r(z_{(\pm 1,\pm 1)} - z_{(\mp 1,\mp 1)})$, $\conv{x_{(\pm 1,\pm 1)}}$ will contain $\conv{z_{(\pm 1,\pm 1)}}$.
\subsection{Proof of Lemma \ref{l16}}\label{pl16}
Since any compact subset of $\mathfrak{X}^\circ$ is contained in a finite union of compact rectangles, it is enough to prove the result when $\texttt{X}$ is a compact rectangle $[a,b]\subset\mathfrak{X}^\circ$. Let $r =\frac{1}{4}\min_{1\leq k\leq d}\{b^k - a^k \}$ and choose $\rho\in(0,\frac{1}{4}r)$, $\rho^* > 0$ and $0 < \rho_* < \frac{1}{2}r$ such that the conclusions of Lemmas \ref{l12} and \ref{l13} hold for any $\alpha\in\{-1,1\}^d$ and any $\beta = (z_1,\ldots,z_d)\in\mathbb{R}^{d\times d}$ with $z_j \in B(\alpha^j r\textbf{e}_j,\rho)$. Take $N\in\mathbb{N}$ such that \begin{equation}\label{ec22}
\frac{1}{N} \max_{1\leq k\leq d} \{ b^k - a^k \}< \frac{1}{32 d}\rho_*
\end{equation}
and divide \texttt{X} into $N^d$ rectangles all of which are geometrically identical to $\frac{1}{N}[0,b-a]$. Let $\mathcal{C}$ be any one of the rectangles in the grid and choose any vertex $z_0$ of $\mathcal{C}$ satisfying
\begin{equation}\nonumber
z_0 = \argmax_{z\in\mathcal{C}}\left\{\max_{1\leq j \leq d} \left\{ z^j - a^j , b^j - z^j \right\}\right\}.
\end{equation}
Then, from the definition of $z_0$ and $r$, there is $\alpha_0\in\{-1,1\}^d$ such that
\begin{equation}\nonumber
B(z_0, r) \cap\left(z_0+\mathcal{R}_{\alpha_0}\right)\subset\texttt{X}.
\end{equation}
Additionally, define
\begin{eqnarray}
B_1 & = & B\left(z_0, \frac{\rho_*}{16\sqrt{d}}\right), \nonumber \\
B_2 & = & B \left(z_0 + \frac{3 \rho_*}{8\sqrt{d}}\alpha_0, \frac{\rho_*}{8\sqrt{d}}\right), \nonumber\\
A_{j} & = & B(z_0 + \alpha_0^j r\textbf{e}_j, \rho) \cap (z_0 + \mathcal{R}_{\alpha_0})\ \ \ \forall \ j=1,\ldots,d, \nonumber \\
A_{-j} & = & B(z_0 - \alpha_0^j r\textbf{e}_j, \rho)\ \ \  \forall \ j=1,\ldots,d. \nonumber
\end{eqnarray}
Observe that all the sets in the previous display have positive Lebesgue measure and that the $A_{-j}$'s are not necessarily contained in $\texttt{X}$. Let $M_1 = \left\|\phi\right\|_\texttt{X}$, $M_0 > \frac{\sigma}{\sqrt{\min\{ \nu(B_1), \nu(B_2), \nu(A_{1}),\ldots,\nu(A_{d})\}}}$, $M= M_1 + M_0$ and $K_\mathcal{C} > 6M$. Also, notice that $\mathcal{C}\subset B_1$ because of (\ref{ec22}). We will argue that
\begin{equation}\label{eq:boundInf}
\p{ \inf_{x\in\mathcal{C}} \{\hat{\phi}_n (x)\} \leq - K_\mathcal{C} \textrm{ i.o.}} = 0.
\end{equation}
From Lemma \ref{l15}, we know that
\begin{equation}\label{ec26}
\p{\bigcap_{j=1}^d \left[ \inf_{x\in A_{j}} \left\{ \left|\hat{\phi}_n (x) - \phi(x)\right|\right\} < M_0 \textrm{ a.a.}\right] }=1,
\end{equation}
so there is, with probability one, $n_0 \in\mathbb{N}$ such that $\inf_{x\in A_{j}} \left\{ \left|\hat{\phi}_n (x) - \phi(x)\right|\right\} < M_0$ for any $n\geq n_0$ and any $j=1,\ldots,d$.

Assume that the event $\left[  \inf_{x\in\mathcal{C}} \{\hat{\phi}_n (x)\}< - K_\mathcal{C} \textrm{ i.o.}\right]$ is true. Then, there is a subsequence $n_k$ such that $\inf_{x\in\mathcal{C}} \{\hat{\phi}_{n_k} (x)\}< - K_\mathcal{C}$ for all $k\in\mathbb{N}$.
Fix any $k \geq n_0$. We know that there is $X_* \in \mathcal{C}\subset B_1$ such that
$\hat{\phi}_{n_k}(X_*) \leq -K_\mathcal{C}$. In addition, for $j=1,\ldots,d$, there are $Z_{\alpha_0^j j } \in A_{j }$ such that $|\hat{\phi}_{n_k}(Z_{\alpha_0^j j}) - \phi(Z_{\alpha_0^j j})|<M_0$, which in turn implies $\hat{\phi}_{n_k}(Z_{\alpha_0^j j}) < M$. Pick any $Z_{-\alpha_0^j}\in A_{-j}$ and let $K=\conv{Z_{\pm 1},\dots, Z_{\pm d}}=z_0 + \conv{Z_{\pm 1}-z_0,\dots, Z_{\pm d}-z_0}$.

Take any $x\in B_2$. We will show the existence of $X^*\in \conv{Z_{\alpha_0^1 1},\ldots,Z_{\alpha_0^d d}}$ such that $x\in\conv{X_*,X^*}$, as shown in Figure \ref{figura5}b for the case $d=2$. We will then show that the existence of such an $X^*$ implies that
\begin{equation}\label{ec49}
|\phi(x)-\hat{\phi}_{n_k}(x)|>M_0.
\end{equation}
Consequently, since $x$ is an arbitrary element of $B_2$ we will have
\begin{equation}\nonumber
\left[\inf_{x\in\mathcal{C}} \{\hat{\phi}_n (x)\} \leq - K_\mathcal{C} \textrm{ i.o.}\right] \cap \left(\bigcap_{j=1}^d \left[ \inf_{x\in A_j} \left\{ \left|\hat{\phi}_n (x) - \phi(x)\right|\right\} < M_0 \textrm{ a.a.}\right]\right)
\end{equation}
\begin{equation}\nonumber
\subset\left[\inf_{x\in B_2} \{|\phi(x) - \hat{\phi}_{n_k}(x)|\}\geq M_0 \textrm{ i.o.} \right].
\end{equation}
But from Lemma \ref{l15}, the event on the right is a null set. Taking (\ref{ec26}) into account, we will see that (\ref{eq:boundInf}) holds and then complete the argument by taking $K_\texttt{X} = \max_\mathcal{C} \{K_\mathcal{C}\}$.

To show the existence of $X^*$ consider the function $\psi:\mathbb{R}\rightarrow\mathbb{R}^d$ given by $\psi(t) = X_* + t(x-X_*)$. The function $\psi$ is clearly continuous and satisfies $\psi(0) = X_*$ and $\psi(1) = x\in B_2 \subset K^\circ$. That $B_2\subset K^\circ$ is a consequence of Lemma \ref{l12}, $(iv)$. The set $K$ is bounded, so there is $T > 1$ such that $\psi (T) \in \textrm{Ext} (K) = \mathbb{R}^d \setminus \overline{K}$. The intermediate value theorem then implies that there is $t^* \in (1,T)$ such that $X^*:= \psi(t^*) \in \partial K$. Observe that by Lemma \ref{l13} $(iii)$ we have
\[ \partial K = \bigcup_{\alpha\in\{-1,1\}^d}\conv{Z_{\alpha^1 1},\ldots, Z_{\alpha^d d} }. \]
Lemma \ref{l12} $(i)$ implies that $\{Z_{\alpha_0^1 1}-z_0,\ldots,Z_{\alpha_0^d d}-z_0\}$ forms a basis of $\mathbb{R}^d$ so we can write $X^*-z_0 = \sum_{j=1}^d \theta^j (Z_{\alpha_0^j j}-z_0)$. Moreover, Lemma \ref{l12} $(vii)$ implies that $\theta^j > 0$ for every $j=1,\ldots,d$ as $\theta = (\theta^1,\ldots,\theta^d) = (Z_{\alpha_0^1 1}-z_0,\ldots,Z_{\alpha_0^d d}-z_0)^{-1}(X^*-z_0)$. Here we apply Lemma \ref{l12} $(vii)$ with $w_1 = X_*\in B_1$, $w_2 = x\in B_2$ and $t^* > 1$.

For $\alpha \in \{-1,1\}^d$ consider the pair $(\xi_\alpha,b_\alpha)\in\mathbb{R}^d\times\mathbb{R}$ as defined in Lemma \ref{l13} for the set of vectors $\{Z_{\pm 1}-z_0,\ldots,Z_{\pm d}-z_0\}$ (here we move the origin to $z_0$). Observe that Lemma \ref{l12} $(ii)$ implies that $\langle \xi_{\alpha_0}, Z_{\alpha_0^j j} - z_0\rangle = b_{\alpha_0}$ for all $j=1,\ldots, d$. Consequently, $\langle \xi_{\alpha_0},X^*-z_0 \rangle = b_{\alpha_0}\sum_{j=1}^d \theta^j$, but since $X^*\in\partial K$, Lemma \ref{l13} ($iv$) implies that $\langle \xi_{\alpha_0},X^*-z_0 \rangle \le b_{\alpha_0}$ and hence $\sum_{j=1}^d \theta^j \leq 1$. Additionally, for $\alpha\neq\alpha_0$ we can write $\langle \xi_\alpha, X^*-z_0 \rangle$ as
\begin{equation}\label{ec45}
\sum_{j=1}^d \theta^j  \langle \xi_\alpha, Z_{\alpha^j_0 j}-z_0\rangle = \sum_{\alpha^j =\alpha_0^j} \theta^j b_\alpha + \sum_{\alpha^j \neq \alpha_0^j}\theta^j \langle \xi_\alpha, Z_{\alpha_0^j j}-z_0\rangle < b_\alpha
\end{equation}
as $\langle \xi_\alpha , Z_{\alpha^j} -z_0\rangle = b_\alpha$ (by Lemma \ref{l12} $(ii)$) and $\langle \xi_\alpha , Z_{-\alpha^j} -z_0 \rangle < 0$ (by Lemma \ref{l12} $(vi)$) for every $j=1,\ldots,d$. Since $\langle \xi_\alpha, w -z_0 \rangle = b_\alpha$ for all $w\in\conv{Z_{\alpha^1 1},\ldots,Z_{\alpha^d d}}$ and all $\alpha\in\{-1.1\}^d$, (\ref{ec45}) and the fact that $X^*\in\partial K$ imply that $X^*\in\conv{Z_{\alpha_0^1 1},\ldots,Z_{\alpha_0^d d}}$. Hence $\hat{\phi}_n(X^*)  \leq \sum_{j=1}^d \theta^j\hat{\phi}_{n_k} ( Z_{\alpha_0^j j}) < M$. We therefore have
\begin{eqnarray}
\hat{\phi}_{n_k}(X^*) < M &,& \hat{\phi}_{n_k}(X_*) < -K_\mathcal{C},\label{ec24}\\
 X_* + \frac{1}{t^*}(X^* - X_*) &=& x.\label{ec25}
\end{eqnarray}
Since $X_*\in B_1$ and $d\geq 1$ we have
\begin{equation}\label{ec46}
|z_0 - X_*| < \frac{1}{8}\rho_* .
\end{equation}
By using the triangle inequality we get the following bounds
\begin{equation}
\frac{1}{4}\rho_* < |z_0 - x| < \frac{1}{2}\rho_*.\label{ec47}
\end{equation}
And from Lemma \ref{l12} $(iv)$ and the fact that $\langle \xi_{\alpha_0} , X^* \rangle = b_{\alpha_0}$ we also obtain
\begin{equation}
|z_0 - X^*| \geq \rho_* .\label{ec48}
\end{equation}
From (\ref{ec25}) we know that $t^* = \frac{|X^* - X_*|}{|x-X_*|}$. Using the triangle inequality with (\ref{ec46}), (\ref{ec47}) and (\ref{ec48}) one can find lower and upper bounds for $|X^* - X_*|$ (as $|X^* - X_*|\geq |X^* - z_0| - |z_0 - X_*|$) and $|x-X_*|$ (as $|x - X_*|\leq |x - z_0| + |z_0 - X_*|$), respectively, to obtain $t^*\geq \frac{7}{5}$. Then, (\ref{ec24}) and (\ref{ec25}) imply
\[ \hat{\phi}_{n_k}(x)   \leq  \left( 1 - \frac{1}{t^*} \right) \hat{\phi}_{n_k}(X_*) + \frac{1}{t^*} \hat{\phi}_{n_k}(X^*) \leq -\frac{2}{7}K_\mathcal{C} + \frac{5}{7}M < -M. \]
Consequently,
\[ |\phi(x) - \hat{\phi}_{n_k}(x)| > M - M_1 = M_0. \]
This proves (\ref{ec49}) and completes the proof. $\hfill \square$

\subsection{Proof of Lemma \ref{l17}}\label{pl17}
Assume without loss of generality that $\texttt{X}$ is a compact rectangle. Let $\{z_\alpha: \alpha\in\{-1,1\}^d \}$ be the set of vertices of the rectangle. Then, there is $r\in (0,1)$ such that $B(z_\alpha,r)\subset\mathfrak{X}^\circ$ $\forall$ $\alpha\in\{-1,1\}^d$. Recall that from Lemma \ref{l14}, there is $0< \rho < \frac{1}{2}r$ such that for any $\{\eta_\alpha : \alpha\in\{-1,1\}^d \}$ if $\eta_\alpha\in B(z_\alpha + \frac{r}{2}(z_\alpha - z_{-\alpha}),\rho)$ then $\texttt{X}\subset \conv{\eta_\alpha : \alpha\in\{-1,1\}^d }$.

Let $A_\alpha = B(z_\alpha + \frac{1}{2}r(z_\alpha - z_{-\alpha}),\frac{\rho}{2})$ and $M_0 > \frac{\sigma}{\sqrt{\min\{\nu(A_\alpha) : \alpha\in\{-1,1\}^d\}}}$ and choose
\[ M_1 = \sup_{x\in \conv{\bigcup_{\alpha\in\{-1,1\}^d} A_\alpha}} \{ |\phi(x) | \}. \]
Take $K_\texttt{X} > M_0 + M_1$. Since
\[ \p{\bigcap_{\alpha\in\{-1,1\}^d} \left[ \inf_{x\in A_\alpha} \{|\hat{\phi}_n (x) - \phi(x)|\} < M_0, \textrm{ a.a.} \right]} = 1 \]
by Lemma \ref{l15}, there is, with probability one, $n_0\in\mathbb{N}$ such that for any $n\geq n_0$ we can find $\eta_\alpha\in A_\alpha$, $\alpha\in\{-1,1\}^d$, such that $|\hat{\phi}_n (\eta_\alpha) - \phi(\eta_\alpha)| < M_0$. It follows that $\hat{\phi}_n (\eta_\alpha) \leq K_\texttt{X}$ $\forall$ $\alpha\in\{-1,1\}^d$. Now, using Lemma \ref{l14} we have $\texttt{X} \subset \conv{\eta_\alpha : \alpha\in\{-1,1\}^d}$ and the convexity of $\hat{\phi}_n$ implies that $\hat{\phi}_n(x) \leq K_\texttt{X}$ for any $x\in\texttt{X}$. $\hfill \square$

\subsection{Proof of Lemma \ref{l18}}\label{pl18}
Assume that $\texttt{X}=[a,b]$ is a rectangle with vertices $\{z_\alpha:\alpha\in\{-1,1\}^d\}$. The function $\psi(x) = \inf_{\eta\in\overline{\textrm{Ext}(\mathfrak{X})}}\{|x-\eta|\}$ is continuous on $\mathbb{R}^d$ so there is $x_* \in \partial \texttt{X}$ such that $\psi(x_*) = \inf_{x\in\partial\texttt{X}}\{\psi(x)\}$. Observe that $\psi(x_*)>0$ because $x_*\in\partial \texttt{X}\subset\mathfrak{X}^\circ$. By Lemma \ref{l14}, there is a $r < \frac{1}{2}\psi(x_*)$ for which there exists $\rho < \frac{1}{4} r$ such that whenever $\eta_\alpha\in A_\alpha := B\left(z_\alpha + \frac{3}{4}r\left(\frac{z_\alpha - z_{-\alpha}}{|z_\alpha - z_{-\alpha}|}\right),\rho\right)$ for any $\alpha\in\{-1,1\}^d$ and
\begin{eqnarray}
K_z &=& \conv{z_\alpha + \frac{1}{2}r\left(\frac{z_\alpha - z_{-\alpha}}{|z_\alpha - z_{-\alpha}|}\right):\alpha\in\{-1,1\}^d}\nonumber\\
K_\eta &=& \conv{\eta_\alpha:\alpha\in\{-1,1\}^d}\nonumber
\end{eqnarray}
we have
\begin{eqnarray}\label{ec27}
\texttt{X}\subset K_z \subset K_\eta^\circ \subset K_\eta \subset\mathfrak{X}^\circ .
\end{eqnarray}
Let $M_0 > \frac{\sigma}{\sqrt{\min\{\nu(A_\alpha) : \alpha\in\{-1,1\}^d\}}}$ and $M_1 \in \mathbb{R}$ be such that
 \[ \p{ \inf_{x\in\texttt{X}} \{\hat{\phi}_n (x)\} \leq - M_0 \textrm{ i.o.}} = 0 \;\; \mbox{ and } \;\; M_1 = \sup_{x\in\conv{\bigcup_{\alpha\in\{-1,1\}^d} A_\alpha}}\{\phi(x)\}. \]
From Lemmas \ref{l15} and \ref{l16} we can find, with probability one, $n_0\in\mathbb{N}$ such that $\inf_{x\in\texttt{X}}\{\hat{\phi}_n(x)\} > -M_0$ and $\inf_{x\in A_\alpha} \{ |\hat{\phi}_n(x) - \phi(x)|\} < M_0$ for any $n\geq n_0$. Define
\begin{eqnarray}
M &=& M_1 + M_0\nonumber\\
K_\texttt{X} &=& \frac{4|b-a|}{r\min_{1\leq j \leq d}\{b^j - a^j\}}M\nonumber
\end{eqnarray}
and take any $n\geq n_0$. Then, for any $\alpha\in\{-1,1\}^d$ we can find $\eta_\alpha\in A_\alpha$ such that $|\hat{\phi}_n (\eta_\alpha) - \phi(\eta_\alpha)| < M_0$. Then, (\ref{ec27}) implies that $\hat{\phi}_n (x)\leq M$ $\forall x\in\texttt{X}$. Take then $x\in\texttt{X}$ and $\xi\in\partial \hat{\phi}_n (x)$. A connectedness argument, like the one used in the proof of Lemma \ref{l16}, implies that there is $t_* > 0$ such that $x+t_*\xi\in\partial K_\eta$. But then we must have $t_* > \frac{r\min_{1\leq j\leq d}\{b^j - a^j\}}{2 |\xi||b-a|}$ as a consequence of (\ref{ec27}), since the smallest distance between $\partial K_z$ and $\partial \texttt{X}$ is $\frac{r\min_{1\leq j\leq d}\{b^j - a^j\}}{2 |b-a|}$ and $\partial K_\eta\subset\textrm{Ext}(K_z)$. This can be seen by taking a look at Figure \ref{figura4}, which shows the situation in the two dimensional case.
\begin{figure}
\centering
\includegraphics[height=7cm,width=9cm]{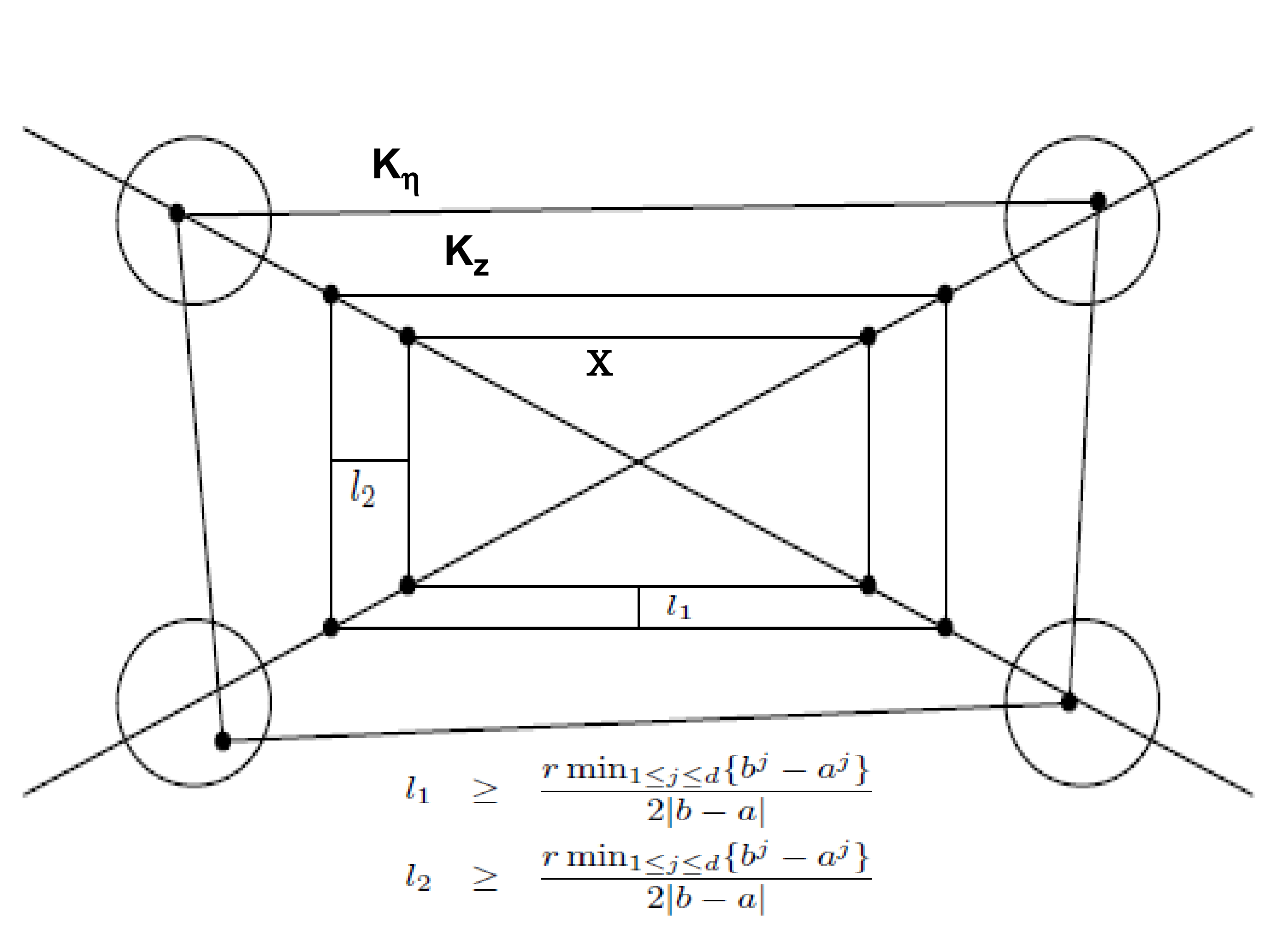}
\caption{The smallest distance between $\partial K_z$ and $\partial \texttt{X}$ is at least $\frac{r\min_{1\leq j\leq d}\{b^j - a^j\}}{2 |b-a|}$.}\label{figura4}
\end{figure}
Thus, using the definition of subgradients,
\[ \frac{r\min_{1\leq j\leq d}\{b^j - a^j\}}{2|\xi||b-a|}\langle\xi,\xi\rangle \leq  \langle\xi,t_* \xi\rangle \leq \hat{\phi}_n(x+t_*\xi) - \hat{\phi}_n(x) \leq 2M \]
which in turn implies $|\xi| \leq K_\texttt{X}$. We have therefore shown that, with probability one, we can find $n_0\in\mathbb{N}$ such that $|\xi| \leq K_\texttt{X}$ $\forall$ $\xi\in\partial \hat{\phi}_n(x)$, $\forall$ $x\in\texttt{X}$, $\forall$ $n\geq n_0$. This completes the proof. $\hfill \square$

\subsection{Proof of Lemma \ref{l19}}\label{pl19}
The result is obvious for conditions \{A1-A3\} and \{A5-A7\} when $\sigma^2 =0$. So we assume that $\sigma^2 > 0$ for \{A1-A3\} and \{A5-A7\}. Let $\epsilon > 0$ and $M=\sup_{x\in\texttt{X}}\{|x|\}$. Choose $\delta > 0$ satisfying
\begin{equation}\label{ec28}
\small \frac{\epsilon}{\frac{2(2M+K\sqrt{d}+1)}{n}\sum_{j=1}^n |Y_j - \phi(X_j)|}< \delta < \frac{\epsilon}{\frac{(2M+K\sqrt{d}+1)}{n}\sum_{j=1}^n |Y_j - \phi(X_j)|}
\end{equation}
for $n$ large. Notice that $\delta$ is well-defined and the quantity on the left is positive, finite and bounded away from 0 as $\linf \frac{1}{n}\sum_{j=1}^n |Y_j - \phi(X_j)| > 0$ a.s. under any set of regularity conditions (for \{A2-A4\}, conditions A4-(i) and A4-(iii) imply that we can apply the version of the strong law of large number for uncorrelated random variables, as it appears in \cite{chung}, page 108, Theorem 5.1.2 to the sequence $(|\epsilon_j|)_{j=1}^\infty$; for \{A1-A3\} and \{A5-A7\} this is immediate as $\sigma^2 >0$). The definition of the class $\mathcal{D}_{K,\texttt{X}}$ implies that all its members are Lipschitz functions with Lipschitz constant bounded by $K\sqrt{d}$, a consequence of \cite{rca}, Theorem 24.7, page 237. Hence, (\ref{ec28}) implies that
\begin{equation}
\sup_{\begin{subarray}{c} |x-y| < \delta\\ x,y\in\texttt{X}, \psi\in\mathcal{D}_{K,\texttt{X}}\end{subarray}}\{|\psi(x) - \psi(y)|\} \leq \frac{\epsilon}{\frac{1}{n}\sum_{j=1}^n |Y_j - \phi(X_j)|}. \nonumber
\end{equation}
Now, define $N_n\in\mathbb{N}$ by $N_n = \left\lceil  \frac{\textrm{diam}(\texttt{X})}{\delta}\right\rceil \lor \left \lceil \frac{2 K\sqrt{d}}{\delta} \right \rceil$, where $\lceil \cdot \rceil$ denotes the ceiling function.
Observe that (\ref{ec28}) implies
\begin{equation}\label{ec35}\small
N_n -1 \leq \left(\textrm{diam}(\texttt{X})\lor 2 K\sqrt{d}\right)\frac{2(2M+K\sqrt{d}+1)}{\epsilon} \left(\frac{1}{n}\sum_{j=1}^n |Y_j - \phi(X_j)| \right).
\end{equation}
Then, we can divide the rectangles $\texttt{X}$ and $[-K,K]^d$ in $N_n^d$ subrectangles, all of which have diameters less than $\delta$. In other words, we can write
\begin{eqnarray}
[-K,K]^d &=& \bigcup_{1\leq j\leq N_n^d} R_j\nonumber\\
\texttt{X} &=& \bigcup_{1\leq j\leq N_n^d} V_j\nonumber
\end{eqnarray}
with $\textrm{diam}(R_j)<\delta$ and $\textrm{diam}(V_j)<\delta$ $\forall$ $j=1,\ldots N_n^d$. In the same way, we can divide the interval $[-K,K]$ in $N_n$ subintervals $\mathcal{I}_1,\ldots,\mathcal{I}_{N_n}$ each having length less than $\delta$. For each $j=1,\ldots,N_n^d$, let $\xi_j$ and $x_j$ be the centroids of $R_j$ and $V_j$ respectively and for $j=1,\ldots,N_n$ let $\eta_j$ be the midpoint of $\mathcal{I}_j$. Consider the class of functions $\mathcal{H}_{n,\epsilon}$ defined by
\begin{equation}\nonumber
\mathcal{H}_{n,\epsilon} = \left\{ \max_{(s,t,j)\in\mathcal{S}}\{ \langle \xi_s , \cdot - x_t\rangle + \eta_j \} : \mathcal{S}\subset\{1,\ldots,N_n^d\}^2\times\{1,\ldots,N_n\}\right\}.
\end{equation}
Observe that the number of elements in the class $\mathcal{H}_{n,\epsilon}$ is bounded from above by $2^{N_n^{2d+1}}$. Now, take any $\psi\in\mathcal{D}_{K,\texttt{X}}$. Pick any $\Xi_j\in\partial\psi(X_j)$. Then, for any $j$ such that $X_j\in\texttt{X}$, there are $s_j,t_j\in\{1,\ldots,N_n^d\}$ and $\tau_j\in\{1,\ldots,N_n\}$ such that $|\Xi_j - \xi_{s_j}|$, $|X_j - x_{t_j}|$ and $|\psi(x_{t_j}) - \eta_{\tau_j}|$ are all less than $\delta$. We then have that
\begin{eqnarray}
& & \sup_{x\in\texttt{X}} \left\{ \left|\langle \xi_{s_j}, x - x_{t_j}\rangle + \eta_{\tau_j} - \left(\langle \Xi_j, x - X_j\rangle + \psi(X_j)\right)\right|\right\}  \nonumber \\
& \leq & 2 M|\xi_{s_j} - \Xi_j| + K\sqrt{d}|x_{t_j} - X_j| + \delta < (2M+K\sqrt{d}+1)\delta \label{ecimportante}
\end{eqnarray}
by an application of the Cauchy-Schwarz inequality. But then, (\ref{ec28}) implies that if we define the functions $\tilde{\psi}$ and $g$ as
\begin{eqnarray}
\tilde{\psi}(x) &=& \max_{X_j\in\texttt{X}} \{ \langle \Xi_j,x -X_j \rangle + \psi(X_j) \},\nonumber\\
g(x) &=& \max_{X_j\in\texttt{X}} \{ \langle \xi_{s_j},x -x_{t_j} \rangle + \eta_{\tau_j} \}\nonumber
\end{eqnarray}
then we have
\begin{eqnarray}
\tilde{\psi}(X_j) &=& \psi(X_j)\ \textrm{ for } j \textrm{ such that }X_j\in\texttt{X}, \label{ec30}\\
\| g - \tilde{\psi} \|_\texttt{X} &\leq& \frac{\epsilon}{\frac{1}{n} \sum_{j=1}^n |Y_j - \phi(X_j)|}\textrm { (from (\ref{ecimportante}))},\label{ec31}\\
g &\in& \mathcal{H}_{n,\epsilon}.\label{ec32}
\end{eqnarray}
Note that (\ref{ec30}) follows from the definition of subgradients. All these facts put together give that for any $f(x,y) = \psi(x)(y-\phi(x))\in\mathcal{G}_{K,\texttt{X}}$, $\psi\in\mathcal{D}_{K,\texttt{X}}$ there is $g\in\mathcal{H}_{n,\epsilon}$ such that
\[ \int_\texttt{X} |f(x,y) - g(x)(y-\phi(x))|\mu_n (dx,dy) < \epsilon \]
and hence
\[ N(\epsilon, \mathcal{G}_{K,\texttt{X}}, \mathbb{L}_1 (\texttt{X}\times\mathbb{R},\mu_n))\leq \# \mathcal{H}_{n,\epsilon} \leq 2^{N_n^{2d + 1}}. \]
But then, the strong law of large numbers and (\ref{ec35}) give that $\lsup N_n <\infty$ a.s. Furthermore, by replacing $\epsilon$ with $\frac{\epsilon}{n}\sum_{j=1}^n|Y_j - \phi(X_j)|$ in the entire construction just made, we can see that the covering numbers \\ $N\left(\frac{\epsilon}{n}\sum_{j=1}^n|Y_j - \phi(X_j)|, \mathcal{G}_{K,\texttt{X}}, \mathbb{L}_1 (\texttt{X}\times\mathbb{R},\mu_n)\right)$ depend neither on the $Y$'s nor on $\phi$. Taking {\footnotesize $B_\epsilon = \left(\textrm{diam}(\texttt{X})\lor K\sqrt{d}\right)\frac{2(2M+K\sqrt{d}+1)}{\epsilon} + 1$} and $A_\epsilon = 2^{B_\epsilon^{2d+1}}$ it is seen that the second part of the result holds. $\hfill \square$

\subsection{Proof of Lemma \ref{lfdp}}\label{plfdp}
Note that for every $m$, we have
\begin{equation}
\frac{1}{n_k}\sum_{1\leq j\leq n_k}\e{\epsilon_j^2} \leq \frac{1}{n_k}\sum_{\begin{subarray}{c}X_j\in\texttt{X}_m\\ 1\leq j\leq n_k\end{subarray}}\e{\epsilon_j^2} + \frac{N_{n_k}(\mathfrak{X}\setminus\texttt{X}_m)}{n_k}\sup_{j\in\mathbb{N}}\{\e{\epsilon_j^2}\}.\nonumber
\end{equation}
Taking limit inferior on both sides as $k \rightarrow \infty$, we get
$$\sigma^2 \leq \linf_{k\rightarrow\infty}\frac{1}{n_k}\sum_{\begin{subarray}{c}X_j\in\texttt{X}_m\\ 1\leq j\leq n_k\end{subarray}}\e{\epsilon_j^2} + \nu(\mathfrak{X}\setminus\texttt{X}_m)\sup_{j\in\mathbb{N}}\{\e{\epsilon_j^2}\}.$$ Now taking the limit as $m\rightarrow\infty$ we get the result because the opposite inequality is trivial. $\hfill \square$

\subsection{Proof of Lemma \ref{l20}}\label{pl20}
We may assume that $\texttt{X}$ is a compact rectangle. Here we need to make a distinction between the design schemes. In the case of the stochastic design, the proof is an immediate consequence of Lemma \ref{l19} and Theorem 2.4.3, page 123 of \cite{vw}. Thus, we focus on the fixed design scenario.

For notational convenience, we write $M = \sup_{j\in\mathbb{N}}\{ \e{\epsilon_j^2}\}$ and $\sum_{X_j\in\texttt{X}}$ instead of the more cumbersome $\sum_{1\leq j\leq n: X_j\in\texttt{X}}$. Letting $\epsilon_j = Y_j - \phi(X_j)$ (and using the same notation as in the proof of Lemma \ref{l20}) first observe that the random quantity \[ \sup_{\psi\in\mathcal{D}_{K,\texttt{X}}} \left\{ \left|\frac{1}{n}\sum_{\{X_j\in\texttt{X}\}} \psi(X_j) \epsilon_j \right|\right\}  = \sup_{m\in\mathbb{N}} \left\{ \sup_{g\in\mathcal{H}_{n,\frac{1}{m}}} \left\{\left| \frac{1}{n}\sum_{\{X_j\in\texttt{X}\}} g(X_j) \epsilon_j \right|\right\}\right\}. \]
by (\ref{ec30}), (\ref{ec31}) and (\ref{ec32}) and is thus measurable.

All of the following arguments are valid for both, \{A1-A3\} and \{A2-A4\}. Lyapunov's inequality (which states that for any random variable $X$ and $1\leq p\leq q \leq \infty$ we have $\|X\|_p \leq \|X\|_q$) and the strong law of large numbers imply
\begin{equation}\label{ec63}
\lsup_{m\rightarrow\infty}\frac{1}{m}\sum_{1\leq j\leq m} |\epsilon_j | = \lsup_{m\rightarrow\infty}\frac{1}{m}\sum_{1\leq j\leq m} \e{|\epsilon_j |} \leq \sqrt{M}\textrm{ a.s.}
\end{equation}

Let $\eta > 0$. From Lemma \ref{l19} we know that the covering numbers {\footnotesize $a_n := N\left(\frac{\eta}{n}\sum_{j=1}^n|Y_j - \phi(X_j)|, \mathcal{G}_{K,\texttt{X}}, \mathbb{L}_1 (\texttt{X}\times\mathbb{R},\mu_n)\right)$}
are not random and uniformly bounded by a constant $A_\eta$. Therefore, for any $n\in\mathbb{N}$ we can find a class $\mathcal{A}_{n}\subset\mathcal{D}_{K,\texttt{X}}$ with exactly $a_n$ elements such that $\{\psi(x)(y-\phi(x))\}_{\psi\in\mathcal{A}_n}$ forms an $\left(\frac{\eta}{n}\sum_{j=1}^n|Y_j - \phi(X_j)|\right)$-net for $\mathcal{G}_{K,\texttt{X}}$ with respect to $\mathbb{L}_1 (\texttt{X}\times\mathbb{R},\mu_n)$. It follows that
\begin{equation}\label{ec36}
\small \sup_{\psi\in\mathcal{D}_{K,\texttt{X}}} \left\{ \left|\frac{1}{n}\sum_{X_j\in\texttt{X}} \psi(X_j) \epsilon_j \right|\right\}\leq \frac{\eta}{n}\sum_{1\leq j\leq n} |\epsilon_j | \ + \sup_{\psi\in\mathcal{A}_n} \left\{ \left|\frac{1}{n}\sum_{X_j\in\texttt{X}} \psi(X_j)\epsilon_j\right|\right\}.
\end{equation}
With (\ref{ec36}) in mind, we make the following definitions
\begin{eqnarray}
B_n &=& \sup_{\psi\in\mathcal{A}_n} \left\{ \left|\frac{1}{n}\sum_{X_j\in\texttt{X}} \psi(X_j)\epsilon_j  \right|\right\}, \nonumber \\
C_n &=& \sup_{\psi\in\mathcal{A}_n} \left\{\left|\frac{1}{n}\sum_{1 \leq j \leq \lfloor \sqrt{n} \rfloor^2 :\ X_j\in\texttt{X}} \psi(X_j)\epsilon_j \right|\right\},\nonumber\\
D_n &=& \sup_{\begin{subarray}{c} \psi\in\mathcal{A}_k\\ n^2 \leq k < (n+1)^2\end{subarray}}\left\{\left|\frac{1}{k}\sum_{n^2 < j \leq k :\ X_j\in\texttt{X}} \psi(X_j)\epsilon_j \right|\right\},\nonumber
\end{eqnarray}
where $\lfloor \cdot \rfloor$ denotes the floor function. Now, pick $\delta > 0$ and observe that
\begin{eqnarray}
\p{B_n > \delta} &=& \p{\bigcup_{\psi\in\mathcal{A}_n} \left[\left|\sum_{X_j\in\texttt{X}} \psi(X_j)\epsilon_j \right|> n\delta\right]}\nonumber\\
&\leq& \sum_{\psi\in\mathcal{A}_n} \frac{1}{n^2\delta^2}M\sum_{X_j\in\texttt{X}}\psi(X_j)^2 \; \; \leq \; \; \frac{K^2 M A_\eta}{n\delta^2}.\nonumber
\end{eqnarray}
The Borel-Cantelli Lemma then implies that $\p{B_{n^2} > \delta\ \textrm{i.o.}} = 0$. Letting $\delta\rightarrow 0$ through a decreasing sequence gives
\begin{equation}\label{ec37}
B_{n^2}\cas 0.
\end{equation}
On the other hand, the definition of $C_n$ implies that
\begin{equation}
C_n \leq \frac{\lfloor \sqrt{n}\rfloor ^2}{n}B_{\lfloor \sqrt{n} \rfloor ^2} +  \frac{\eta}{n}\sum_{1\leq j\leq \lfloor \sqrt{n}\rfloor^2} |\epsilon_j |\label{eq:Cn}
\end{equation}
which together with (\ref{ec37}) and (\ref{ec63}) gives
\begin{equation}\label{ec60}
\lsup C_n \leq \eta\sqrt{M}\ \textrm{ almost surely.}
\end{equation}
Note that (\ref{eq:Cn}) is a consequence of the fact that for any $\psi \in \mathcal{A}_n$, there exists $g \in \mathcal{A}_{\lfloor \sqrt{n} \rfloor^2}$ such that if $\mathcal{J}_n = \{1\leq j \leq \lfloor \sqrt{n} \rfloor^2 : X_j\in\texttt{X}\}$, then
\begin{eqnarray*}
\left|\frac{1}{n}\sum_{j\in\mathcal{J}_n} \psi(X_j)\epsilon_j \right| & \le & \left|\frac{1}{n}\sum_{j\in\mathcal{J}_n} (\psi(X_j) - g(X_j) )\epsilon_j \right| + \left|\frac{1}{n}\sum_{j\in\mathcal{J}_n} g(X_j)\epsilon_j \right| \\
& \le & \left(\frac{\lfloor \sqrt{n} \rfloor^2}{n}\right)\frac{\eta}{\lfloor \sqrt{n} \rfloor^2}\sum_{1\leq j\leq \lfloor \sqrt{n}\rfloor^2} |\epsilon_j | + \frac{\lfloor \sqrt{n}\rfloor ^2}{n}B_{\lfloor \sqrt{n} \rfloor ^2}.
\end{eqnarray*}
Now, a similar argument to the one used in (\ref{ec37}) gives
\begin{eqnarray}
\p{D_n > \delta} &=& \p{\bigcup_{\begin{subarray}{c} \psi\in\mathcal{A}_k\\ n^2 \leq k < (n+1)^2\end{subarray}} \left[\left|\sum_{n^2 < j \leq k : X_j\in\texttt{X}} \psi(X_j)\epsilon_j \right|> k \delta\right]}\nonumber \\
&\leq& \sum_{\begin{subarray}{c} \psi\in\mathcal{A}_k\\ n^2 \leq k < (n+1)^2\end{subarray}}\p{\left|\sum_{n^2 < j \leq k : X_j\in\texttt{X}} \psi(X_j)\epsilon_j \right|> k \delta}\nonumber\\
&\leq& \sum_{\begin{subarray}{c} \psi\in\mathcal{A}_k\\ n^2 \leq k < (n+1)^2\end{subarray}}\frac{K^2 M (k-n^2)}{k^2\delta^2} \; \; \leq \; \;  \frac{K^2M A_\eta (2n+1)^2}{n^4 \delta^2}.\label{ec38}
\end{eqnarray}
Again, one can use (\ref{ec38}) and the Borel-Cantelli Lemma to prove that \\ $\p{D_n > \delta\ \textrm{i.o.}}=0$ and then let $\delta\rightarrow 0$ through a decreasing sequence to obtain
\begin{equation}\label{ec61}
D_n\cas 0.
\end{equation}
Finally, one sees that
\[ \sup_{\psi\in\mathcal{A}_n} \left\{ \left|\frac{1}{n}\sum_{X_j\in\texttt{X}} \psi(X_j)(Y_j - \phi(X_j)) \right|\right\} = B_n \leq C_n + D_{\lfloor \sqrt{n} \rfloor},\]
which combined with (\ref{ec60}) and (\ref{ec61}) gives
\begin{equation}\nonumber
\lsup B_n \leq \eta\sqrt{M}\ \textrm{ almost surely.}
\end{equation}
Taking (\ref{ec36}) into account we get
\[ \lsup_{n\rightarrow\infty} \sup_{\psi\in\mathcal{D}_{K,\texttt{X}}} \left\{ \left|\frac{1}{n}\sum_{1\leq j \leq n: X_j\in\texttt{X}} \psi(X_j)(Y_j - \phi(X_j)) \right|\right\}\leq 2 \eta\sqrt{M}\ \textrm{ almost surely}.\]
Letting $\eta\rightarrow 0$ we get the desired result. $\hfill \square$

\subsection{Proof of Lemma \ref{l21}}\label{pl21}
We can assume, without loss of generality, that $\texttt{X}$ is a finite union of compact rectangles. Consider a sequence $(\texttt{X}_m)_{m=1}^\infty$ satisfying the following properties:
\begin{enumerate}[(a)]
\item $\texttt{X}\subset\texttt{X}_m\subset\mathfrak{X}^\circ$ $\forall$ $m\in\mathbb{N}$.
\item $\nu(\texttt{X}_m) > 1-\frac{1}{m}$ $\forall$ $m\in\mathbb{N}$.
\item $\texttt{X}_m\subset\texttt{X}_{m+1}$ $\forall$ $m\in\mathbb{N}$.
\item Every $\texttt{X}_m$ can be expressed as a finite union of compact rectangles with positive Lebesgue measure.
\end{enumerate}
The existence of such a sequence follows from the inner regularity of Borel probability measures on $\mathbb{R}^d$ and from the fact that since $\mathfrak{X}^\circ$ is open, for any compact set $F\subset\mathfrak{X}^\circ$ we can find a finite cover composed by compact rectangles with positive Lebesgue measure and completely contained in $\mathfrak{X}^\circ$. Also, from Lemmas \ref{l16}, \ref{l17} and \ref{l18} and the fact that $\mathfrak{X}\subset \textrm{Dom}(\phi)$, for any $m\in\mathbb{N}$ we can find $K_m > 0$ such that
\begin{eqnarray}
\|\phi\|_{\texttt{X}_m} \leq K_m & \mbox{ and } & \p{\|\hat{\phi}_n\|_{\texttt{X}_m} > K_m\ \textrm{ i.o.}} = 0; \label{eq:Star} \\
\sup_{\begin{subarray}{c} x\in\texttt{X}_m\\ \xi\in\partial\phi(x)\end{subarray}}\{|\xi|\} \leq K_m & \mbox{ and } & \op{\sup_{\begin{subarray}{c} x\in\texttt{X}_m \\ \xi\in\partial\hat{\phi}_n(x)\end{subarray}}\{|\xi|\} > K_m\ \textrm { i.o.}} = 0. \label{eq:StarStar}
\end{eqnarray}
Fix $\eta > 0$ and consider the sets
\begin{eqnarray}
A &=& \left[\inf_{x\in\texttt{X}}\{  \phi(x) - \hat{\phi}_n (x) \} \geq \eta\ \textrm{ i.o.}\right]\nonumber\\
B &=& \left[\|\hat{\phi}_n\|_{\texttt{X}_m} \leq K_m\ \textrm { a.a.}\right]\nonumber\\
C &=& \bigg[\sup_{\begin{subarray}{c} x\in\texttt{X}_m \\ \xi\in\partial\hat{\phi}_n(x)\end{subarray}}\{|\xi|\} \leq K_m\ \textrm { a.a.}\bigg].\nonumber
\end{eqnarray}
Suppose now that $A\cap B\cap C$ is known to be true. Then, there is a subsequence $(n_k)_{k=1}^\infty$ such that $\inf_{x\in\texttt{X}}\{  \phi(x) - \hat{\phi}_{n_k} (x) \} \geq \eta\ \forall \ k\in\mathbb{N}$ and $\frac{1}{n_k}\sum_{j=1}^{n_k} \e{\epsilon_j^2}\rightarrow \sigma^2$. Taking (\ref{eq:Star}) and (\ref{eq:StarStar}) into account, we have that for $k$ large enough the inequality
\begin{eqnarray*}
\frac{1}{n_k} \sum_{j=1}^{n_k} (Y_j - \hat{\phi}_{n_k}(X_j))^2 \geq \frac{1}{n_k} \sum_{X_j\in\texttt{X}_m} (Y_j - \phi(X_j))^2 \qquad \qquad \qquad\\
+ \frac{2}{n_k} \sum_{X_j\in\texttt{X}_m} (Y_j - \phi(X_j))(\phi(X_j) - \hat{\phi}_{n_k}(X_j)) + \frac{1}{n_k} \sum_{X_j\in\texttt{X}_m} (\phi(X_j) - \hat{\phi}_{n_k}(X_j))^2
\end{eqnarray*}
implies
\[\frac{1}{n_k} \sum_{j=1}^{n_k} (Y_j - \hat{\phi}_{n_k}(X_j))^2 \geq \frac{1}{n_k} \sum_{X_j\in\texttt{X}_m} (Y_j - \phi(X_j))^2\ +\]
\[ \frac{N_{n_k}(\texttt{X})}{n_k}\eta^2 - 4 \sup_{\psi\in\mathcal{D}_{K_m,\texttt{X}_m}} \left\{ \left|\frac{1}{n_k}\sum_{\{1\leq j\leq n_k: X_j\in\texttt{X}_m\}} \psi(X_j)(Y_j - \phi(X_j)) \right|\right\}.\]
Thus, from Lemma \ref{l20} we can conclude that
\begin{eqnarray}
\linf_{k\rightarrow\infty}\frac{1}{n_k} \sum_{1\leq j \leq n_k} (Y_j - \hat{\phi}_{n_k}(X_j))^2 \geq \nu(\texttt{X}_m)\sigma^2 + \nu(\texttt{X})\eta^2 \textrm{ if \{A1-A3\} hold.} \nonumber
\end{eqnarray}
Under \{A2-A4\} and \{A5-A7\} the left-hand side of the last display is bounded from below by
\[\linf_{k\rightarrow\infty}\frac{1}{n_k}\sum_{X_j\in\texttt{X}_m} (Y_j - \phi(X_j))^2 + \nu(\texttt{X})\eta^2\] and \[\int_{\texttt{X}_m} (y - \phi(x))^2\mu(dx,dy) + \nu(\texttt{X})\eta^2,\] respectively.

Finally, using (a)-(d), the strong law of large numbers (for \{A2-A4\} we can apply a version of the strong law of large numbers for independent random variables thanks to condition A4-(ii); see \cite{pwm}, Lemma 12.8, page 118 or \cite{fol}, Theorem 10.12, page 322) and Lemma \ref{lfdp} we can let $m\rightarrow\infty$ to see that, under any of \{A1-A3\}, \{A2-A4\} or \{A5-A7\},
\[ \linf_{k\rightarrow\infty}\frac{1}{n_k} \sum_{1\leq j \leq n_k} (Y_j - \hat{\phi}_{n_k}(X_j))^2 \geq \sigma^2 + \nu(\texttt{X})\eta^2\]
which is impossible because $\hat{\phi}_{n_k}$ is the least squares estimator.

Therefore $\op{A\cap B \cap C} = 0$ and, since $\ip{B\cap C}=1$, \[\p{A}=\p{\inf_{x\in\texttt{X}}\{  \phi(x) - \hat{\phi}_n (x) \} \geq \eta\ \textrm{ i.o.}}=0.\]
This finishes the proof of $(i)$. The second assertion follows from similar arguments. $\hfill \square$

\subsection{Proof of Lemma \ref{l23}}\label{pl23}
We can assume, without loss of generality, that $\texttt{X}$ is a finite union of compact rectangles. Pick $K_\texttt{X}$ such that
\begin{eqnarray}
\sup_{\begin{subarray}{c} x\in\texttt{X} \\ \xi\in\partial\phi(x)\end{subarray}}\{|\xi|\} \leq K_\texttt{X} \textrm{ and }\op{\sup_{\begin{subarray}{c} x\in\texttt{X}\\ \xi\in\partial\hat{\phi}_n(x)\end{subarray}}\{|\xi|\} > K_\texttt{X}\ \textrm { i.o.}} = 0.\nonumber
\end{eqnarray}
Let $\eta > 0$ and $\delta = \frac{\eta}{3K_\texttt{X}}$. We can then divide $\texttt{X}$ in $M$ subrectangles $\{\mathcal{C}_1,\ldots,\mathcal{C}_M\}$ all having diameter less than $\delta$. Define the events
\begin{eqnarray}
A &=& \left[\bigcap_{1\leq k \leq M} \inf_{x\in\mathcal{C}_k}\{  \hat{\phi}_n (x) - \phi(x) \} < \frac{\eta}{3}\ \textrm{ a.a.}\right]\nonumber\\
B &=& \bigg[\sup_{\begin{subarray}{c} x\in\texttt{X} \\ \xi\in\partial\hat{\phi}_n(x)\end{subarray}}\{|\xi|\} \leq K_\texttt{X}\ \textrm { a.a.}\bigg].\nonumber
\end{eqnarray}
We will show that $A\cap B \subset \left[\sup_{x\in\texttt{X}}\{ \hat{\phi}_n (x) - \phi(x) \} \leq \eta\ \textrm{ a.a.} \right]$. Suppose $A\cap B$ is true. Then, there is $N\in\mathbb{N}$ such that for any $n\geq N$ we can find $\Xi_{n,k}\in\mathcal{C}_k$ such that $\hat{\phi}_n (\Xi_{n,k}) - \phi(\Xi_{n,k}) < \frac{\eta}{3}$. Moreover, we can make $N$ large enough such that for any $n\geq N$, $K_\texttt{X}$ is an upper bound for all the subgradients of $\hat{\phi}_n$ on $\texttt{X}$. Then, for any $\xi\in\mathcal{C}_k$ we obtain from the Lipschitz property,
\begin{eqnarray}
\hat{\phi}_n(\xi) - \phi(\xi) &=&  (\hat{\phi}_n(\Xi_{n,k}) - \phi(\Xi_{n,k})) + (\phi(\Xi_{n,k})-\phi(\xi)) + (\hat{\phi}_n(\xi) - \hat{\phi}_n(\Xi_{n,k}))\nonumber\\
 &\leq& \frac{\eta}{3} + K_\texttt{X}\delta + K_\texttt{X}\delta\leq \eta.\nonumber
\end{eqnarray}
Therefore,
\[\sup_{x\in\mathcal{C}_k}\{  \hat{\phi}_n (x) - \phi(x) \} \leq \eta \ \ \ \forall\ 1\leq k \leq M\ \forall\ n\geq N\]
which implies
\[ \sup_{x\in\texttt{X}}\{  \hat{\phi}_n (x) - \phi(x) \} \leq \eta\ \ \ \forall\ n\geq N.\]
Considering Lemmas \ref{l21}-(ii) and \ref{l18}; $A\cap B \subset \left[\sup_{x\in\texttt{X}}\{ \hat{\phi}_n (x) - \phi(x) \} \leq \eta\ \textrm{ a.a.} \right]$ and $\ip{A\cap B} = 1$ we obtain $(ii)$. The first assertion follows from similar arguments and $(iii)$ is a direct consequence of $(i)$ and $(ii)$. $\hfill \square$

\subsection{Proof of Lemma \ref{l22}}\label{pl22}
Throughout this proof we will denote by $\textbf{B}$ the unit ball (w.r.t. the euclidian norm) in $\mathbb{R}^d$.
From Theorem 25.5, page 246 on \cite{rca} we know that $f$ is continuously differentiable on $\mathcal{C}$. Let
\[ h_* = \inf_{\begin{subarray}{c} \xi\in\texttt{X},\eta\in\mathbb{R}^d\setminus\mathcal{C}\end{subarray}} \{|\xi - \eta|\} > 0. \]
Pick $\epsilon > 0$. We will first show that there is $n_\epsilon \in \mathbb{N}$ such that
\begin{equation}\label{ec39}
\langle \xi , \eta \rangle \leq \langle \nabla f(x) , \eta \rangle + \epsilon,\ \ \ \forall\ \xi\in\partial f_n(x),\ \forall\ x\in\texttt{X},\ \forall \ \eta \in \textbf{B},\ \forall\ n\geq n_\epsilon.
\end{equation}
Suppose that such an $n_\epsilon$ does not exist. Then, there is an increasing sequence $(m_n)_{n=1}^\infty$ such that for any $n\in\mathbb{N}$ we can find $x_{m_n}\in\texttt{X}$, $\xi_{m_n}\in\partial f_{m_n}(x_{m_n})$, $\eta_{m_n}\in\textbf{B}$ satisfying $\langle \xi_{m_n} , \eta_{m_n} \rangle > \langle \nabla f(x_{m_n}) , \eta_{m_n} \rangle + \epsilon$. But $\texttt{X}$ and \textbf{B} are both compact, so there are $x_*\in\texttt{X}$, $\eta_*\in\textbf{B}$ and a subsequence $(k_n)_{n=1}^\infty$ of $(m_n)_{n=1}^\infty$ such that $x_{k_n}\rightarrow x_*$ and $\eta_{k_n}\rightarrow\eta_*$. Then, for any $0<h<h_*$ we have
\[ \frac{f_{k_n}(x_{k_n} + h \eta_{k_n})-f_{k_n}(x_{k_n})}{h} \geq \langle \xi_{k_n}, \eta_{k_n}\rangle > \langle \nabla f(x_{m_n}) , \eta_{k_n} \rangle + \epsilon\ \ \forall\ n\in\mathbb{N}, \]
and therefore
\[ \linf_{n\rightarrow\infty}\lim_{h\downarrow 0}\frac{f_{k_n}(x_{k_n} + h \eta_{k_n})-f_{k_n}(x_{k_n})}{h} \geq \langle \nabla f(x_*) , \eta_* \rangle + \epsilon.\]
But this is impossible in view of Theorem 24.5, page 233 on \cite{rca}. It follows that we can choose some $n_\epsilon\in\mathbb{N}$ with the property described in (\ref{ec39}). By noting that $-\textbf{B} = \textbf{B}$, we can conclude from (\ref{ec39}) that
\[ |\langle \xi , \eta \rangle - \langle \nabla f(x) , \eta \rangle| \leq \epsilon\ \ \ \forall\ \xi\in\partial f_n(x),\ \forall\ x\in\texttt{X},\ \forall\  \eta \ \in \textbf{B},\ \forall\ n\geq n_\epsilon.\]
By taking $\eta_\xi = \frac{\xi - \nabla f(x)}{|\xi - \nabla f(x)|}$ when $\xi\neq\nabla f(x)$ we get
\[ \sup_{\begin{subarray}{c}x\in\texttt{X}\\ \xi\in\partial f_n (x)\end{subarray}}\left\{ |\xi - \nabla f(x)|\right\} \leq \epsilon\ \ \ \forall\ n\geq n_\epsilon. \]
Since $\epsilon > 0$ was arbitrarily chosen, this completes the proof. $\hfill \square$

\appendix
\section{Appendix}\label{sec5}
\subsection{Results from convex analysis}
\begin{lemma}\label{l1}
Let $z\in\mathbb{R}^n$, $x_1,\ldots,x_n\in\mathbb{R}^d$ and define the function $g:\mathbb{R}^d\rightarrow\overline{\mathbb{R}}$ by
\[ g(x) = \inf\left\{ \sum_{k=1}^n \theta^k z^k : \sum_{k=1}^n \theta^k = 1,\ \sum_{k=1}^n \theta^k x_k = x,\ \theta\geq 0,\ \theta\in\mathbb{R}^n \right\}. \]
Then, $g$ defines a convex function whose effective domain is $\conv{x_1,\ldots,x_n}$. Moreover, if $\mathcal{K}_{x,z}$ is the collection of all proper convex functions $\psi$ such that $\psi(x_j)\leq z^j$ for all $j=1,\ldots,n$, then $g = \sup_{\psi\in\mathcal{K}_{x,z}}\{\psi\}. $
\end{lemma}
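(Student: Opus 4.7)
The plan is to establish the three claims in sequence: the effective domain, convexity of $g$, and the supremum representation.

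First, for the effective domain, $g(x) < +\infty$ holds if and only if the feasibility set $\{\theta \in \mathbb{R}^n : \theta \geq 0, \sum_k \theta^k = 1, \sum_k \theta^k x_k = x\}$ is nonempty, which is precisely the defining condition $x \in \conv{x_1,\ldots,x_n}$. Hence $\textrm{Dom}(g) = \conv{x_1,\ldots,x_n}$.

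Next, for convexity, I would fix $x, y \in \conv{x_1,\ldots,x_n}$ and $\lambda \in [0,1]$ (the inequality is trivial otherwise, since then the right-hand side is $+\infty$). Given any feasible weight vectors $\theta$ for $x$ and $\phi$ for $y$, the vector $\lambda\theta + (1-\lambda)\phi$ is componentwise nonnegative, sums to one, and satisfies $\sum_k (\lambda\theta^k + (1-\lambda)\phi^k) x_k = \lambda x + (1-\lambda) y$, so it is feasible for the program defining $g(\lambda x + (1-\lambda) y)$. Consequently $g(\lambda x + (1-\lambda) y) \leq \lambda \sum_k \theta^k z^k + (1-\lambda) \sum_k \phi^k z^k$, and taking the infimum separately in $\theta$ and $\phi$ yields $g(\lambda x + (1-\lambda) y) \leq \lambda g(x) + (1-\lambda) g(y)$.

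For the supremum representation I would prove both inequalities. To show $\sup_{\psi \in \mathcal{K}_{x,z}}\{\psi\} \leq g$, pick any $\psi \in \mathcal{K}_{x,z}$ and any $x \in \conv{x_1,\ldots,x_n}$; for any feasible $\theta$, convexity of $\psi$ combined with $\psi(x_k) \leq z^k$ gives $\psi(x) = \psi(\sum_k \theta^k x_k) \leq \sum_k \theta^k \psi(x_k) \leq \sum_k \theta^k z^k$, and taking infimum over $\theta$ delivers $\psi(x) \leq g(x)$; for $x \notin \conv{x_1,\ldots,x_n}$ the inequality is trivial. For the reverse, I would verify that $g$ itself belongs to $\mathcal{K}_{x,z}$: convexity has just been established, the inequality $g(x_j) \leq z^j$ follows by taking $\theta^k = \delta_{kj}$, and $g$ is proper because on its effective domain the constraints $\theta \geq 0$ and $\sum_k \theta^k = 1$ force $\sum_k \theta^k z^k \geq \min_k z^k > -\infty$. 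Thus $g \in \mathcal{K}_{x,z}$, so trivially $g \leq \sup_{\psi \in \mathcal{K}_{x,z}} \{\psi\}$.

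The argument is essentially mechanical, so there is no genuine obstacle; the only subtlety is ensuring that $g$ is proper (and in particular that $g > -\infty$ on its effective domain) so that it qualifies as a member of $\mathcal{K}_{x,z}$, which is exactly what closes the second half of the supremum equality.
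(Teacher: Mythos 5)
Your proof is correct and follows essentially the same route as the paper: feasibility of convex combinations of weight vectors for convexity, the $\inf(\emptyset)=+\infty$ convention for the effective domain, and the two-sided comparison $\psi\leq g$ plus $g\in\mathcal{K}_{x,z}$ for the supremum identity. You are in fact slightly more explicit than the paper in verifying that $g$ is proper (bounded below by $\min_k z^k$ on its domain), a point the paper leaves implicit when asserting $g\in\mathcal{K}_{x,z}$.
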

\begin{proof} To see that $g$ defines a convex function, for any $x\in\mathbb{R}^d$ write
\[ A_x = \left\{ \theta\in\mathbb{R}^n : \sum_{k=1}^n \theta^k = 1,\ \sum_{k=1}^n \theta^k x_k = x,\ \theta\geq 0 \right\}\]
and observe that for any $x,y\in\mathbb{R}^d $, $t\in (0,1)$, $\vartheta\in A_y$ and $\theta\in A_x$  we have $t\theta + (1-t)\vartheta \in A_{tx + (1-t)y} $ and hence
\[ \frac{g\left(tx + (1-t)y\right) - (1-t) \sum_{k=1}^n \vartheta^k z^k}{t} \leq \sum_{k=1}^n \theta^k z^k. \]
Taking infimum over $A_x$ and rearranging terms, we get
\[\frac{g\left(tx + (1-t)y\right) - t g(x)}{1-t} \leq \sum_{k=1}^n \vartheta^k z^k\]
and taking now the infimum over $A_y$ gives the desired convexity. The convention that $\inf(\emptyset) = +\infty$ shows that the effective domain is precisely the convex hull of $x_1,\ldots,x_n$. Finally, for any $\psi\in\mathcal{K}_{x,z}$ and $x\in\conv{x_1,\ldots,x_n}$ we have, for $\theta\in\mathbb{R}^n$ with $\theta\geq 0$, $x = \sum_{j=1}^n \theta^j x_j$ and $\sum_{j=1}^n \theta^j =1$,
\[ \psi(x)\leq \sum_{j=1}^n \theta^j \psi(x_j) \leq \sum_{j=1}^n \theta^j z^j\]
since $\psi(x_j)\leq z^j$ for any $j=1,\ldots,n$. The definition of $g$ as an infimum then implies that $\psi(x)\leq g(x)$ $\forall$ $\psi\in\mathcal{K}_{x,z}$, $x\in\conv{x_1,\ldots,x_n}$. The result then follows from the fact that $g\in\mathcal{K}_{x,z}$.
\end{proof}

\begin{lemma}\label{l8}
Let $z\in\mathbb{R}^n$, $x_1,\ldots,x_n\in\mathbb{R}^d$ and define the function $h:\mathbb{R}^d\rightarrow\overline{\mathbb{R}}$ by
\[ h(x) = \inf\left\{ \sum_{k=1}^n \theta^k z^k : \sum_{k=1}^n \theta^k = 1,\ \vartheta + \sum_{k=1}^n \theta^k X_k = x,\ \theta\geq 0,\ \theta\in\mathbb{R}^n, \vartheta\in\mathbb{R}^d_+ \right\} \]
Then, $h$ defines a convex, componentwise nonincreasing function whose effective domain is $\conv{x_1,\ldots,x_n}+\mathbb{R}^d_+$. Moreover, if $\mathcal{Q}_{x,z}$ is the collection of all componentwise nonincreasing, proper convex functions $\psi$ such that $\psi(x_j)\leq z^j$ for all $j=1,\ldots,n$, then $h = \sup_{\psi\in\mathcal{Q}_{x,z}}\{\psi\}.$
\end{lemma}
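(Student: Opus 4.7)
The plan is to follow the template of Lemma \ref{l1} closely, treating the extra slack variable $\vartheta \in \mathbb{R}^d_+$ as essentially a bookkeeping device that encodes the componentwise nonincreasing requirement. For any $x \in \mathbb{R}^d$ I would introduce the feasible set
\[ A_x = \left\{ (\theta,\vartheta)\in\mathbb{R}^n_+\times\mathbb{R}^d_+ : \sum_{k=1}^n \theta^k = 1,\ \vartheta + \sum_{k=1}^n \theta^k x_k = x \right\}, \]
and first observe that $A_x\neq\emptyset$ iff $x\in\conv{x_1,\ldots,x_n}+\mathbb{R}^d_+$, which, combined with the convention $\inf(\emptyset)=+\infty$, immediately yields that the effective domain of $h$ is $\conv{x_1,\ldots,x_n}+\mathbb{R}^d_+$.

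For convexity I would mimic the argument in Lemma \ref{l1}: given $x,y\in\mathbb{R}^d$, $t\in(0,1)$, $(\theta,\vartheta)\in A_x$ and $(\tilde\theta,\tilde\vartheta)\in A_y$, the convex combination $(t\theta+(1-t)\tilde\theta,\, t\vartheta+(1-t)\tilde\vartheta)$ lies in $A_{tx+(1-t)y}$, which gives
\[ h(tx+(1-t)y)\leq t\sum_{k=1}^n \theta^k z^k + (1-t)\sum_{k=1}^n \tilde\theta^k z^k; \]
taking infima over $A_x$ and then $A_y$ produces $h(tx+(1-t)y)\leq th(x)+(1-t)h(y)$. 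The componentwise nonincreasing property then falls out from the slack variable: if $x\leq y$ componentwise, then for any $(\theta,\vartheta)\in A_x$ the pair $(\theta,\vartheta+(y-x))$ lies in $A_y$ with the same objective value, so $h(y)\leq h(x)$.

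It remains to verify the sup-representation. One direction is trivial: since $h$ is itself convex, componentwise nonincreasing, and satisfies $h(x_j)\leq z^j$ (take $\theta=\mathbf{e}_j$, $\vartheta=0$), we have $h\in\mathcal{Q}_{x,z}$, hence $h\leq\sup_{\psi\in\mathcal{Q}_{x,z}}\psi$. For the reverse inequality, let $\psi\in\mathcal{Q}_{x,z}$ and $(\theta,\vartheta)\in A_x$. Then
\[ \psi(x)=\psi\Bigl(\vartheta+\sum_{k=1}^n \theta^k x_k\Bigr)\leq \psi\Bigl(\sum_{k=1}^n \theta^k x_k\Bigr)\leq \sum_{k=1}^n \theta^k \psi(x_k)\leq \sum_{k=1}^n \theta^k z^k, \]
where the first inequality uses that $\psi$ is componentwise nonincreasing and $\vartheta\geq 0$, the second uses convexity, and the third uses $\psi(x_k)\leq z^k$. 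Taking the infimum over $A_x$ yields $\psi(x)\leq h(x)$, completing the proof.

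There is no real obstacle here; the only point that deserves attention is the coordination between the slack $\vartheta\in\mathbb{R}^d_+$ in the definition of $h$ and the componentwise nonincreasing hypothesis on members of $\mathcal{Q}_{x,z}$, which is precisely what makes the inequality $\psi(\vartheta+\sum \theta^k x_k)\leq \psi(\sum\theta^k x_k)$ valid. Everything else is a direct transcription of the argument in Lemma \ref{l1}.
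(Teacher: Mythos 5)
Your argument is correct and follows essentially the same route as the paper: convexity via convex combinations of feasible $(\theta,\vartheta)$ pairs exactly as in Lemma \ref{l1}, monotonicity by absorbing $y-x$ into the slack $\vartheta$, the effective domain from the emptiness of the feasible set, and the sup-representation through the chain $\psi(x)\leq\psi(\sum\theta^k x_k)\leq\sum\theta^k\psi(x_k)\leq\sum\theta^k z^k$ combined with $h\in\mathcal{Q}_{x,z}$. No changes needed.
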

\begin{proof} The proof that $h$ is convex is similar to the proof that $g$ is convex in Lemma \ref{l1}. Now, if $x\leq y\in\mathbb{R}^d$, observe that for any $\theta\in\mathbb{R}^n$, $\vartheta\in\mathbb{R}^d_+$ with $\sum_{k=1}^n \theta^k = 1,\ \vartheta + \sum_{k=1}^n \theta^k X_k = x,\ \theta\geq 0$, we also have $\vartheta + (y-x) + \sum_{k=1}^n \theta^k X_k = y$ and $\vartheta + (y-x) \in\mathbb{R}_+^d$. Then, from the definition of $h$ we see that $h(x)\geq h(y)$. Thus, $h$ is componentwise nonincreasing. That the effective domain of $h$ is $\conv{x_1,\ldots,x_n}+\mathbb{R}^d_+$ is clear from the fact that for any $x$ not belonging to that set, the infimum defining $h(x)$ would be taken over the empty set. Finally, for any $\psi\in\mathcal{Q}_{x,z}$ and $x\in\conv{x_1,\ldots,x_n}+\mathbb{R}^d_+$ we have, for $\theta\in\mathbb{R}^n$ and $\vartheta\in\mathbb{R}^d_+$ with $\theta\geq 0$, $x = \vartheta + \sum_{j=1}^n \theta^j x_j$ and $\sum_{j=1}^n \theta^j =1$,
\[ \psi(x)\leq \function{\psi}{\sum_{j=1}^n \theta^j x_j} \leq \sum_{j=1}^n \theta^j \psi(x_j) \leq \sum_{j=1}^n \theta^j z^j\]
since $\psi(x_j)\leq z^j$ for any $j=1,\ldots,n$. The definition of $h$ as an infimum then implies that $\psi(x)\leq h(x)$ $\forall$ $\psi\in\mathcal{Q}_{x,z}$, $x\in\conv{x_1,\ldots,x_n}+\mathbb{R}^d_+$. The result then follows from the fact that $h\in\mathcal{Q}_{x,z}$.
\end{proof}

\subsection{Results from matrix algebra}\label{apapendice}
Before proving Lemma \ref{l12}, we need the following result.

\begin{lemma}\label{lemmaoptimization}
Let $j\in \{1,\ldots,d\}$, $\alpha\in\{-1,1\}^d$ and $\rho_* > 0$. Then, the optimal value of the optimization problem
\[ \begin{array}{ll}
\textrm{min} & \langle \alpha^j \textbf{e}_j, w_2 - w_1 \rangle\\
\textrm{s.t.} & \left| w_2 - \frac{3\rho_*}{8\sqrt{d}}\alpha \right| \leq \frac{\rho_*}{8\sqrt{d}}\\
 & |w_1| \leq \frac{\rho_*}{16\sqrt{d}}\\
 & w_1,w_2\in\mathbb{R}^d
\end{array}
\]
is $\frac{3}{16\sqrt{d}}\rho_*$ and it is attained at $w_1^* = \frac{\rho_*}{16\sqrt{d}}\alpha^j\textbf{e}_j$ and $w_2^* = \frac{3\rho_*}{8\sqrt{d}}\alpha - \frac{\rho_*}{8\sqrt{d}}\alpha^j\textbf{e}_j$.
\end{lemma}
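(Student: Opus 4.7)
The plan is to observe that the objective splits across $w_1$ and $w_2$, so the two ball-constrained problems can be solved independently by Cauchy--Schwarz. Specifically, since $\langle \alpha^j \mathbf{e}_j, w_2 - w_1\rangle = \alpha^j w_2^j - \alpha^j w_1^j$ and the constraints on $w_1$ and $w_2$ are independent, minimizing the objective is equivalent to minimizing $\alpha^j w_2^j$ over the feasible set for $w_2$ and separately maximizing $\alpha^j w_1^j$ over the feasible set for $w_1$.

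For the $w_1$ part, I would apply Cauchy--Schwarz to get $\alpha^j w_1^j = \langle \alpha^j \mathbf{e}_j, w_1\rangle \leq |\alpha^j \mathbf{e}_j|\,|w_1| \leq \frac{\rho_*}{16\sqrt{d}}$, with equality precisely when $w_1$ is the positive scalar multiple of $\alpha^j \mathbf{e}_j$ that saturates the norm bound, namely $w_1^* = \frac{\rho_*}{16\sqrt{d}}\alpha^j \mathbf{e}_j$ (using that $(\alpha^j)^2 = 1$).

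For the $w_2$ part, I would translate the problem by setting $u = w_2 - \frac{3\rho_*}{8\sqrt{d}}\alpha$, so the constraint becomes $|u| \leq \frac{\rho_*}{8\sqrt{d}}$ and $\alpha^j w_2^j = \alpha^j u^j + \frac{3\rho_*}{8\sqrt{d}}(\alpha^j)^2 = \alpha^j u^j + \frac{3\rho_*}{8\sqrt{d}}$. Cauchy--Schwarz again gives $\alpha^j u^j \geq -\frac{\rho_*}{8\sqrt{d}}$, with equality at $u = -\frac{\rho_*}{8\sqrt{d}}\alpha^j \mathbf{e}_j$, hence $w_2^* = \frac{3\rho_*}{8\sqrt{d}}\alpha - \frac{\rho_*}{8\sqrt{d}}\alpha^j \mathbf{e}_j$.

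Combining the two gives the optimal value $\frac{3\rho_*}{8\sqrt{d}} - \frac{\rho_*}{8\sqrt{d}} - \frac{\rho_*}{16\sqrt{d}} = \frac{3\rho_*}{16\sqrt{d}}$, matching the claim. There is no real obstacle here; the only thing to be mildly careful about is bookkeeping the sign from $\alpha^j \in \{-1,1\}$, which is handled by the identity $(\alpha^j)^2 = 1$ in both the $w_1$ and $w_2$ steps.
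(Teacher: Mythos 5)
Your proof is correct, and it takes a genuinely different route from the paper. You exploit the fact that the objective $\langle\alpha^j\textbf{e}_j,w_2-w_1\rangle=\alpha^j w_2^j-\alpha^j w_1^j$ decouples over the two independent ball constraints, and you solve each piece by Cauchy--Schwarz: maximizing $\alpha^j w_1^j$ over $|w_1|\leq\frac{\rho_*}{16\sqrt{d}}$ gives $\frac{\rho_*}{16\sqrt{d}}$ at $w_1^*=\frac{\rho_*}{16\sqrt{d}}\alpha^j\textbf{e}_j$, and after the translation $u=w_2-\frac{3\rho_*}{8\sqrt{d}}\alpha$ the bound $\alpha^j u^j\geq-|u|$ gives the minimum $\frac{3\rho_*}{8\sqrt{d}}-\frac{\rho_*}{8\sqrt{d}}$ of $\alpha^j w_2^j$ at the claimed $w_2^*$; the arithmetic combining these is right. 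The paper instead sets the problem up as a smooth constrained minimization and verifies the Karush--Kuhn--Tucker second-order necessary and sufficient conditions with explicit multipliers $\lambda_1^*=\frac{16\sqrt{d}}{\rho_*}$, $\lambda_2^*=\frac{8\sqrt{d}}{\rho_*}$, citing Nocedal--Wright. Your argument is more elementary and self-contained: the Cauchy--Schwarz bounds are global, so you get the global optimal value and attainment directly (and, via the equality case of Cauchy--Schwarz, even uniqueness of the minimizer), without invoking nonlinear programming theorems or the compactness/existence argument implicitly needed to pass from a strict local KKT solution to the global optimum. What the paper's approach buys is mainly uniformity of method with the rest of its optimization-flavored arguments (e.g.\ the quadratic programs elsewhere in Section 4) and an explicit certificate of strict local optimality via the multipliers; for the statement of this lemma, your decomposition is arguably the cleaner proof.
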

\begin{proof} Writing $w=(w_1;w_2)$ with $w_1,w_2\in\mathbb{R}^d$ for any $w\in\mathbb{R}^{2d}$, consider $f,g_1,g_2:\mathbb{R}^{2d}\rightarrow\mathbb{R}$ defined as:
\begin{eqnarray}
f(w) &=& \langle \alpha^j \textbf{e}_j, w_2 - w_1 \rangle, \nonumber \\
g_1(w) &=&  \frac{1}{2}\left(\left(\frac{\rho_*}{16\sqrt{d}}\right)^2 - |w_1|^2\right), \nonumber \\
g_2(w) &=&  \frac{1}{2}\left(\left(\frac{\rho_*}{8\sqrt{d}}\right)^2 - \left| w_2 - \frac{3\rho_*}{8\sqrt{d}}\alpha \right|^2\right). \nonumber
\end{eqnarray}
Then, $f,g_1,g_2$ are twice continuously differentiable on $\mathbb{R}^{2d}$ and the optimization problem can be re-written as minimizing $f(w)$ over the set $\{w\in\mathbb{R}^{2d} : g_1(w)\geq 0, g_2(w)\geq 0\}$. The proof now follows by noting that the vector $w^* = (w_1^*;w_2^*)\in\mathbb{R}^{2d}$ and the Lagrange multipliers $\lambda_1^* = \frac{16\sqrt{d}}{\rho_*}$ and $\lambda_2^* = \frac{8\sqrt{d}}{\rho_*}$ are the only ones which satisfy the Karush-Kuhn-Tucker second order necessary and sufficient conditions for a strict local solution to this problem as stated in Theorem 12.5, page 343 and Theorem 12.6, page 345 in \cite{nowr}.
\end{proof}

\subsubsection{Proof of Lemma \ref{l12}}\label{pl12}
Without loss of generality, we may assume that $r=1$. Let $R_r$ be $\frac{1}{\sqrt{d}}$ and pick $\delta \in \left(0,\frac{1}{\sqrt{d}}\right)$, $\rho_* = \frac{1}{\sqrt{d}} - \delta$ and $\rho^* = \frac{2d}{1-\delta\sqrt{d}}$. Consider a matrix $Z = (z_1,\ldots,z_d)\in\mathbb{R}^{d\times d}$ with columns $z_1,\ldots,z_d\in\mathbb{R}^d$ and define the function $\tilde{\xi}:\mathbb{R}^{d\times d}\rightarrow\mathbb{R}^d$ as
\begin{equation}\nonumber
\tilde{\xi}(Z) = \left|\begin{array}{cccc}
\textbf{e}_1 & z_2^1 - z_1^1 & \cdots & z_d^1 - z_1^1 \\
\vdots & \vdots & \vdots & \vdots \\
\textbf{e}_d & z_2^d - z_1^d & \cdots & z_d^d - z_1^d
\end{array}\right|
\end{equation}
where the bars denote the determinant and the equation is written symbolically to express that $\tilde{\xi}(Z)$ is a linear combination of the vectors $\{\textbf{e}_j\}_{1\leq j\leq d}$ with the cofactor corresponding to the $(j,1)$-th position as the coefficient of $\textbf{e}_j$. This is a common notation for ``generalized vector products''; see, for instance, \cite{caj}, Section 2.4.b, page 187 for more details. Since the determinant and all cofactors can be seen as a continuous function on $\mathbb{R}^{d\times d}$, it follows that $\tilde{\xi}$ is continuous on $\mathbb{R}^{d\times d}$. Now choose $\alpha\in\{-1,1\}^d$ and observe that
\begin{eqnarray}
\tilde{\xi}(\alpha^1\textbf{e}_1,\ldots,\alpha^d\textbf{e}_d) &=& \left(\prod_{j=1}^d \alpha^j\right)\alpha, \nonumber\\
\left|\tilde{\xi}(\alpha^1\textbf{e}_1,\ldots,\alpha^d\textbf{e}_d)\right| &=& \sqrt{d}, \nonumber\\
\langle \tilde{\xi}(\alpha^1\textbf{e}_1,\ldots,\alpha^d\textbf{e}_d), \alpha^j\textbf{e}_j \rangle &=& \prod_{k=1}^d \alpha^k\ \ \forall\ j=1,\ldots,d.\nonumber
\end{eqnarray}
Since $\mathbb{R}^{d\times d}$ has the product topology of the $d$-fold topological product of $\mathbb{R}^d$ with itself, the continuity of $\tilde{\xi}$ and of $\langle\cdot,\cdot\rangle$ imply that we can find $\rho_\alpha\in \left(0, \frac{1}{\sqrt{d}}-\delta\right)$ such that if $x_j\in B(\alpha^j\textbf{e}_j,\rho_\alpha)$ for any $j=1,\ldots,d$, $\beta = \{x_1,\ldots,x_d\}$ and $X_\beta = (x_1,\ldots,x_d)$, then
\begin{eqnarray}
\left|\left|\tilde{\xi}(X_\beta)\right|-\sqrt{d}\right| &<& \delta, \nonumber\\
\left|\frac{\tilde{\xi}(X_\beta)}{|\tilde{\xi}(X_\beta)|} - \frac{\prod_{1\leq j\leq d}\alpha^j}{\sqrt{d}}\alpha\right| &<& \delta, \label{ec10}\\
\left| \left \langle \frac{\tilde{\xi}(X_\beta)}{|\tilde{\xi}(X_\beta)|},x_j \right\rangle - \frac{\prod_{k = 1}^d \alpha^k}{\sqrt{d}}\right| &<& \delta\ \ \forall\ j=1,\ldots,d\label{ec11}.
\end{eqnarray}
Taking this into account, define
\begin{eqnarray}
\xi_{\alpha,\beta} = \left(\prod_{j=1}^d \alpha^j\right) \frac{\tilde{\xi}(X_\beta)}{|\tilde{\xi}(X_\beta)|}, \textrm{ and } b_{\alpha,\beta} = \langle \xi_{\alpha,\beta}, x_1 \rangle \nonumber.
\end{eqnarray}
From the definition of the function $\tilde{\xi}$ it is straight forward to see that $\langle \xi_{\alpha,\beta}, x_j - x_1\rangle = 0$ $\forall j\in\{1,\ldots,d\}$, so we in fact have
\[ x_1,\ldots,x_d\in\mathcal{H}_{\alpha,\beta}:= \{x\in\mathbb{R}^d: \langle \xi_{\alpha,\beta},x\rangle = b_{\alpha,\beta} \}. \]
Moreover, (\ref{ec10}) and (\ref{ec11}) imply
\begin{eqnarray}
\frac{1}{\sqrt{d}}+\delta > b_{\alpha,\beta} > \frac{1}{\sqrt{d}}-\delta > 0, \nonumber\\
\min_{1\leq j \leq d} \left\{ |\xi_{\alpha,\beta}^j | \right\} > \frac{1}{\sqrt{d}}-\delta>0. \nonumber
\end{eqnarray}
For simplicity, and without loss of generality (the other cases follow from symmetry), we now assume that $\alpha = \textbf{e}$, the vector of ones. By solving the corresponding quadratic programming problems, it is not difficult to see that
\begin{eqnarray}
\rho_* = \frac{1}{\sqrt{d}} - \delta < b_{\alpha,\beta} &=& \inf_{\langle \xi_{\alpha,\beta},x\rangle \geq b_{\alpha,\beta}} \{ |x| \} \nonumber\\
\rho^* = \frac{2d}{1-\delta\sqrt{d}} > \frac{b_{\alpha,\beta}}{\min_{1\leq j \leq d} \{ |\xi_{\alpha,\beta}^j | \}} &=&
\sup_{\begin{subarray}{c} \langle \xi_{\alpha,\beta},x\rangle \leq b_{\alpha,\beta} \\ x\geq 0\end{subarray}} \{ |x| \}.\nonumber
\end{eqnarray}
For the first inequality see, for instance, Exercise 16.2, page 484 of \cite{nowr}. For the second one, one must notice that $2\sqrt{d} > \frac{1}{\sqrt{d}}+\delta > b_{\alpha,\beta}$ and that the optimal value of the optimization problem must be attained at one of the vertices of the polytope $\{x\in\mathbb{R}^d_+:\langle \xi_{\alpha,\beta},x\rangle \leq b_{\alpha,\beta}\}$. The latter statement can be derived from the Karush-Kuhn-Tucker conditions of the problem.

The inequalities in the last display imply that $B(0,\rho_*)\subset\mathcal{H}_{\alpha,\beta}^-$ and
$\{x\in\mathbb{R}^d: |x| \geq \rho^*\}\cap\mathcal{R}_\alpha \subset \mathcal{H}_{\alpha,\beta}^+$.

Finally, for $x \in B(- \alpha^j \textbf{e}_j, \frac{1}{2}\rho_\alpha)$ we have $|x+x_j|<\rho_\alpha$ and therefore $\langle \xi_{\alpha,\beta},x\rangle < -\langle \xi_{\alpha,\beta},x_j\rangle + \rho_\alpha < \delta - \frac{1}{\sqrt{d}} + \rho_\alpha < 0$. We can then take any $\rho \leq \frac{1}{2}\min_{\alpha \in\{-1,1\}^d} \{\rho_\alpha\}$ to make $(i)$-$(vi)$ be true. We'll now argue that by making $\rho$ smaller, if required, $(vii)$ also holds.

Let $B_1 = B\left(0,\frac{\rho_*}{16\sqrt{d}}\right)$, $B_2 = B\left(\frac{3\rho_*}{8\sqrt{d}}\alpha,\frac{\rho_*}{8\sqrt{d}}\right)$ and consider the functions $\varphi,\psi:\mathbb{R}^{d\times d}\rightarrow\mathbb{R}$ given by
\begin{eqnarray}
\varphi(X) &=& \inf_{w_1\in B_1, w_2\in B_2}\left\{ \min_{1\leq j\leq d} \left\{ \left(X(w_2 - w_1)\right)^j\right\}\right\},\nonumber \\
\psi(X) &=& \sup_{w_1\in B_1}\left\{ \max_{1\leq j\leq d} \left\{ \left(X w_1\right)^j\right\}\right\}.\nonumber
\end{eqnarray}
Both of these functions are Lipschitz continuous with the metric induced by the $\|\cdot\|_2$-norm on $\mathbb{R}^{d\times d}$ with Lipschitz constants smaller than $\rho_*$. To see this, observe that $$|X(w_2-w_1) - Y(w_2-w_1)|\leq \|X-Y\|_2 |w_2 - w_1| \leq \frac{9}{16}\rho_* \|X-Y\|_2$$ for all $w_1\in B_1$, $w_2\in B_2$ and $X,Y\in\mathbb{R}^{d\times d}$. Also, simple algebra shows that $\left|\min_{1\leq j \leq d} \{x^j\} - \min_{1\leq j \leq d} \{y^j\}\right|\leq |x-y|$ $\forall$ $x,y\in\mathbb{R}^d$. From these assertions, one immediately gets the Lipschitz continuity of $\varphi$. Similar arguments show the same for $\psi$.

Let $\mathcal{I}_\alpha\in\mathbb{R}^{d\times d}$ be the diagonal matrix whose $j$'th diagonal element is precisely $\alpha^j$. From Lemma \ref{lemmaoptimization} it is seen that $\varphi(\mathcal{I}_\alpha) = \frac{3\rho_*}{16\sqrt{d}}$. On the other hand, it is immediately obvious that $\psi(\mathcal{I}_\alpha) = \frac{\rho_*}{16\sqrt{d}}$. Using one more time the continuity of $\psi$ and $\varphi$ and that the topology in $\mathbb{R}^{d\times d}$ is the same as the topology of the $d$-fold topological product of $\mathbb{R}^d$, for each $\alpha\in\{-1,1\}^d$ we can find $r_\alpha$ for which $X_\beta = (x_1,\ldots,x_d)\in\mathbb{R}^{d\times d}$ and $|x_j-\alpha^j \textbf{e}_j|<r_\alpha$ for all $j=1,\ldots,d$ imply $|\psi(X_\beta^{-1}) - \frac{\rho_*}{16\sqrt{d}}| < \frac{\rho_*}{32\sqrt{d}}$ and $|\varphi(X_\beta^{-1}) - \frac{3\rho_*}{16\sqrt{d}}| < \frac{\rho_*}{16\sqrt{d}}$. It follows that
\begin{eqnarray}
&& \inf_{\begin{subarray}{c} t\geq 1 \\ w_1 \in B_1, w_2 \in B_2\end{subarray}}\left\{ \min_{ 1\leq j\leq d}\left\{\left(X_\beta^{-1}(w_1 + t(w_2 - w_1))\right)^j\right\}\right\} \nonumber \\
& \ge & \inf_{\begin{subarray}{c} t\geq 1 \\ w_1 \in B_1, w_2 \in B_2\end{subarray}}\left\{ \min_{ 1\leq j\leq d}\left\{\left(t X_\beta^{-1}(w_2 - w_1)\right)^j\right\}\right\} - \sup_{w_1 \in B_1}\left\{ \max_{ 1\leq j\leq d}\left\{\left(X_\beta^{-1}w_1 \right)^j\right\}\right\} \nonumber \\
&\geq & \varphi(X_\beta^{-1}) - \psi(X_\beta^{-1}) \; \; > \;\; \frac{\rho_*}{8\sqrt{d}} - \frac{3\rho_*}{32\sqrt{d}} \; \; = \;\; \frac{\rho_*}{32\sqrt{d}} > 0.\nonumber
\end{eqnarray}
The proof is then finished by taking $\rho\leq\min_{\alpha\in\{-1,1\}^d}\left\{r_\alpha\land\frac{\rho_\alpha}{2}\right\}.$ $\hfill \square$

\subsubsection{Proof of Lemma \ref{l13}}\label{pl13}
Assume again, without loss of generality, that $r=1$. Lemma \ref{l12} $(ii)$ and $(vi)$ imply that $x_{\alpha^j j},x_{-\alpha^j j}\in\{x\in\mathbb{R}^d : \langle x, \xi_{\alpha}\rangle \leq b_\alpha\}$ for any $j=1,\ldots,n$ and any $\alpha\in\{-1,1\}^d$. It follows that, in addition to being convex, $\cap_{\alpha\in\{-1,1\}^d} \{x\in\mathbb{R}^d: \langle \xi_\alpha , x \rangle \leq b_\alpha \}$ contains $\{x_{\pm 1},\ldots, x_{\pm d}\}$ and hence it must contain $K$. For the other contention, take $x\in\cap_{\alpha\in\{-1,1\}^d} \{w\in\mathbb{R}^d: \langle \xi_\alpha , w \rangle \leq b_\alpha \}$ with $x\neq 0$ and any $\alpha\in\{-1,1\}^d$ for which $x\in\mathcal{R}_\alpha$. Then, $\langle \xi_{\alpha},x \rangle > 0$ for otherwise we would have
\[ \kappa x\in \mathcal{R}_\alpha\setminus \mathcal{H}_\alpha^+\ \ \forall\ \kappa \geq 0 \]
which is impossible by $(v)$ in Lemma \ref{l12}. Thus, $\mathcal{J}_x = \{\alpha\in\{-1,1\}^d : \langle \xi_\alpha,x \rangle > 0\} \neq \emptyset$ and we can define
\begin{eqnarray}
r_x = \min_{\alpha\in\mathcal{J}_x} \left\{ \frac{b_\alpha}{\langle \xi_\alpha,x \rangle}\right\} \ \mbox{ and } \alpha_x = \argmin_{\alpha\in\mathcal{J}_x} \left\{ \frac{b_\alpha}{\langle \xi_\alpha,x \rangle}\right\}.\nonumber
\end{eqnarray}
Note that $r_x\geq 1$. Since $\beta_{\alpha_x}$ is a basis, there is $\theta\in\mathbb{R}^d$ such that $r_x x = \theta^1 x_{\alpha_x^1 1} + \ldots + \theta^d x_{\alpha_x^d d}$. But then, $$b_{\alpha_x} = \langle r_x x,\xi_{\alpha_x}\rangle = \sum_{k=1}^d \theta^k \langle x_{\alpha^k_x k},\xi_{\alpha_x}\rangle = b_{\alpha_x} \sum_{k=1}^d \theta^k $$ where the last equality follows from $(ii)$ of Lemma \ref{l12} and therefore $\theta^1 + \ldots + \theta^d = 1$. Now assume that $\theta^j < 0$ for some $j\in\{1,\ldots,d\}$ and set $\gamma_x\in\{-1,1\}^d$ with $\gamma_x^k = \alpha_x^k$ for $k\neq j$ and $\gamma_x^j = - \alpha_x^j$.
But then, $\sum_{k\neq j} \theta^k = 1 - \theta^j >1$, $\langle x_{\alpha_x^k k}, \xi_{\gamma_x}\rangle = b_{\gamma_x}$ for $k\neq j$ and $\langle x_{\alpha_j^j j}, \xi_{\gamma_x}\rangle < 0$ by $(ii)$ and $(vi)$ in Lemma \ref{l12}. Therefore,
\begin{eqnarray}
\langle r_x x ,\xi_{\gamma_x} \rangle &=& \theta^j \langle x_{-\alpha_x^j j}, \xi_{\gamma_x}\rangle + \sum_{k\neq j} \theta^k \langle x_{\alpha_x^k k}, \xi_{\gamma_x}\rangle\label{ec16}\\
 &>& \sum_{k\neq j} \theta^k \langle x_{\alpha_x^k k}, \xi_{\gamma_x}\rangle > b_{\gamma_x}\label{ec17}
\end{eqnarray}
which is impossible because it contradicts the definition of $r_x$. Hence, $\theta\geq 0$ and we have $r_x x\in \conv{\beta_{\alpha_x}}$. Note that since 0 belongs in the interior of $\cap_{\alpha\in\{-1,1\}^d} \{w\in\mathbb{R}^d: \langle \xi_\alpha , w \rangle \leq b_\alpha \}$, there there is $\kappa>0$ such that $-\kappa x\in\cap_{\alpha\in\{-1,1\}^d} \{w\in\mathbb{R}^d: \langle \xi_\alpha , w \rangle \leq b_\alpha \}$. Applying the same arguments as before to $-\kappa x$ instead of $x$, we can find $\tilde{r}_x >0$ and $\tilde{\alpha}_x\in\{-1,1\}^d$ such that $-\tilde{r}_x x \in \conv{\beta_{\tilde{\alpha}_x}}$. It follows that $- \tilde{r}_x x, r_x x \in K$ and therefore $0,x\in K$ since $r_x \geq 1$. Hence, we have proved $(i)$.

To prove $(ii)$, note that $A:=\cap_{\alpha\in\{-1,1\}^d} \{w\in\mathbb{R}^d: \langle \xi_\alpha , w \rangle < b_\alpha \}$ is open and, by $(i)$, it is contained in $K$. Thus, $A\subset K^\circ$. That $K^\circ\subset A$ follows from the fact that if $x\in K\setminus A$, then $\langle \xi_\alpha ,x \rangle = b_\alpha$ for some $\alpha\in\{-1,1\}^d$, which implies that $B(x,\tau)\cap\textrm{Ext}(K)\neq \emptyset$ for all $\tau>0$ and hence $x\notin K^\circ$.

It is then obvious that $(iv)$ follows from the identity $\partial K = \overline{K} \setminus K^\circ$ and the fact that $K$ is closed.

Pick any $\alpha\in\{-1,1\}^d$ and observe that $(ii)$ and $(vi)$ from Lemma \ref{l12} imply that for any $\gamma\in\{-1,1\}^d$ we have
\[ \langle \xi_\gamma, x_{\alpha^k k} \rangle \left\{ \begin{array}{cl}
 = b_\gamma & \textrm{ if } \gamma^k =\alpha^k\\
 <0 \leq b_\gamma & \textrm{ if } \gamma^k = - \alpha^k
 \end{array}\right.
\]
which by $(iv)$ of this lemma show that
\[x_{\alpha^j j}\in  \{w\in\mathbb{R}^d: \langle \xi_\alpha , w \rangle = b_\alpha \}\cap \left( \cap_{\gamma\in\{-1,1\}^d} \{w\in\mathbb{R}^d: \langle \xi_\gamma , w \rangle \leq b_\gamma \}\right)\]
for all $\alpha\in\{-1,1\}^d$ and $j=1,\ldots, d$. Since the sets on the right-hand side of the last display are all convex we can conclude that
{\small \[ \conv{x_{\alpha^1 1},\ldots,x_{\alpha^j j}}\subset \{w\in\mathbb{R}^d: \langle \xi_\alpha , w \rangle = b_\alpha \}\cap\left( \cap_{\gamma\in\{-1,1\}^d} \{w\in\mathbb{R}^d: \langle \xi_\gamma , w \rangle \leq b_\gamma \}\right)\]}
for all $\alpha\in\{-1,1\}^d$. Thus, $ \bigcup_{\alpha\in\{-1,1\}^d} \conv{x_{\alpha^1 1},\ldots,x_{\alpha^j j}} \subset \partial K$.
Finally, take $x\in\partial K$. Then, there is $\alpha_x\in\{-1,1\}^d$ such that $\langle \xi_{\alpha_x}, x \rangle = b_{\alpha_x}$. Since $\beta_{\alpha_x}$ is a basis we can again find $\theta\in\mathbb{R}^d$ such that $x = \theta^1 x_{\alpha_x^1 1} + \ldots + \theta^d x_{\alpha_x^d d}$. Just as before, $\langle \xi_{\alpha_x} , x_{\alpha_x^j j} \rangle = b_{\alpha_x}$ implies that $\sum \theta^j = 1$. And again, if $\theta^j < 0$ for some $j$, we can take  $\gamma_x\in\{-1,1\}^d$ with $\gamma_x^k = \alpha_x^k$ for $k\neq j$ and $\gamma_x^j = - \alpha_x^j$ and arrive at a contradiction with similar arguments to those used in (\ref{ec16}) and (\ref{ec17}). This shows that $x\in\conv{\beta_{\alpha_x}}$ and completes the proof as $(v)$ and $(vi)$ are direct consequences of $(i)-(iv)$ and Lemma \ref{l12}. $\hfill \square$

\subsubsection{Proof of Lemma \ref{l14}}\label{pl14}
Let $r\in(0,\frac{1}{d-2})$ if $d\ge 3$ and $r>0$ if $d\le 2$. Since the geometric properties of any rectangle depend only on the direction and magnitude of the diagonal, we may assume without loss of generality that $b>0$ and that $a=\frac{r}{1+r} b$. This is because we can define $\tilde{b} = (1+r)(b-a) > 0$ and $\tilde{a} = a-r(b-a)$ to obtain $[a,b] = \tilde{a} + \left[\frac{r}{r+1}\tilde{b},\tilde{b}\right]$. For any $\alpha\in\{-1,1\}^d$, define $\alpha_j = \alpha - 2\alpha^j \textbf{e}_j \in \mathbb{R}^d$ and $w_\alpha = z_\alpha + r(z_\alpha - z_{-\alpha})$. Additionally, define the functions $\psi_\alpha,\varphi_\alpha :\mathbb{R}^{d\times d}\times\mathbb{R}^d\rightarrow\mathbb{R}$ by
\begin{eqnarray}
\psi_\alpha (\Theta,\theta) &=& \langle \textbf{e}, \Theta(z_\alpha - \theta) \rangle\nonumber\\
\varphi_\alpha(\Theta,\theta) &=& \min_{1\leq j \leq d} \left\{\left(\Theta(z_\alpha - \theta)\right)^j\right\}.\nonumber
\end{eqnarray}
Considering $\mathbb{R}^{d\times d}$ with the topology generated be the $\|\cdot\|_2$ norm and $\mathbb{R}^{d\times d}\times \mathbb{R}^d$ with the product topology, it is easily seen that both functions defined in the last display are continuous. Now, let $W_\alpha\in\mathbb{R}^{d\times d}$ be the matrix whose $j$'th column is precisely $w_{\alpha_j} - w_\alpha$. It is not difficult to see that $\psi_\alpha (W_\alpha^{-1},w_\alpha) = \frac{dr}{1+2r} < 1$ and $\varphi_\alpha (W_\alpha^{-1},w_\alpha) = \frac{r}{1+2r} > 0$. For instance, one can check that for $\alpha = -\textbf{e}$, one has $w_\alpha = 0$ and $w_{\alpha_j} = \frac{1+2r}{1+r}b^j\textbf{e}_j$ and the result is now evident. By symmetry, the same is true for any $\alpha\in\{-1,1\}^d$. Therefore, for any $\alpha\in\{-1,1\}^d$ there is $\rho_\alpha$ such that whenever $|x_{\alpha_j} - w_{\alpha_j}|<\rho_\alpha$ $\forall$ $j=1,\ldots,d$ and $X_\alpha$ is the matrix whose $j$'th column is $x_{\alpha_j} - x_\alpha$, we get
\begin{eqnarray}
\psi_\alpha (X_\alpha^{-1},x_\alpha) &<& 1, \label{ec18}\\
\varphi_\alpha (X_\alpha^{-1},x_\alpha) &>& 0.\label{ec19}
\end{eqnarray}
Letting $\rho = \min_{\alpha\in\{-1,1\}^d}\left\{\rho_\alpha\right\}$ completes the proof as (\ref{ec18}) and (\ref{ec19}) imply $z_\alpha\in\conv{x_\alpha,x_{\alpha_1}, \ldots,x_{\alpha_d}}^\circ$. $\hfill \square$

\bibliography{Referencias}
\bibliographystyle{apalike}

\end{document}